\numberwithin{equation}{section}
\newcommand\andd{\text{ and }}
\newcommand\cA{{\mathcal A}}
\newcommand\cC{{\mathcal C}}
\newcommand\cF{{\mathcal F}}
\newcommand\cO{{\mathcal O}}
\newcommand\cP{{\mathcal P}}
\newcommand\cV{{\mathcal V}}
\newcommand\bb{{\boldsymbol{b}}}
\newcommand\bd{{\boldsymbol{d}}}
\newcommand\be{{\boldsymbol{e}}}
\newcommand\bbf{{\boldsymbol{f}}}
\newcommand\bg{{\boldsymbol{g}}}
\newcommand\bh{{\boldsymbol{h}}}
\newcommand\bk{{\boldsymbol{k}}}
\newcommand\bm{{\boldsymbol{m}}}
\newcommand\bn{{\boldsymbol{n}}}
\newcommand\bp{{\boldsymbol{p}}}
\newcommand\br{{\boldsymbol{r}}}
\newcommand\bs{{\boldsymbol{s}}}
\newcommand\bt{{\boldsymbol{t}}}
\newcommand\bu{{\boldsymbol{u}}}
\newcommand\bv{{\boldsymbol{v}}}
\newcommand\bw{{\boldsymbol{w}}}
\newcommand\bZr{{\boldsymbol{0}}}
\newcommand\C{\mathbb{C}}
\newcommand\R{\mathbb{R}}
\newcommand\N{\mathbb{N}}
\newcommand\Z{\mathbb{Z}}
\newcommand\T{\mathbb{T}}
\newcommand\E{\mathbb{E}}
\newcommand\sX{\mathsf{X}}
\newcommand\2{\overline{2}}
\newcommand\3{\overline{3}}
\newcommand\4{\overline{4}}
\newtheorem{theorem}{Theorem}[section]
\newtheorem{proposition}[theorem]{Proposition}
\newtheorem{corollary}[theorem]{Corollary}
\newtheorem{lemma}[theorem]{Lemma}
\theoremstyle{definition}
\newtheorem{definition}[theorem]{Definition}
\newtheorem{example}[theorem]{Example}
\newtheorem{remark}[theorem]{Remark}
\newtheorem{remarks}[theorem]{Remarks}
\begin{document}

\title[Directional expansiveness]{Directional expansiveness for $\R^d$-actions and for Penrose tilings}

\author{Hyeeun Jang}
\address{Department of Mathematics, Abilene Christain University, Abilene, TX 79601}
\email{hxj22b@acu.edu}
\author{E. Arthur Robinson, Jr.}
\address{Department of Mathematics, George Washington University, Washington, DC 20052}
\email{robinson@gwu.edu}
\date{June 9, 2025, (Version 2)}

\begin{abstract}
We define and study two kinds of directional expansiveness, weak and strong,
for an action $T$ of $\R^d$ on a compact metric space $X$.
We show that for $\R^2$ finite local complexity (FLC) tiling dynamical systems,
weak and strong expansiveness are the same, and are both equivalent to a simple coding property.  
Then we show for the Penrose tiling dynamical system, which is FLC, there 
are exactly five non  expansive directions,  the directions 
perpendicular to the 5th roots of unity. We also study Raphael Robinson's set of $24$ Penrose Wang tiles and show the corresponding Penrose Wang tile dynamical system is strictly ergodic. Finally, we study two
deformations of the Penrose Wang tile system, 
one where the square Wang tiles are all deformed into a $2\pi/5$ rhombus, and another where they are deformed into a set of eleven tetragon tiles. We show both of these are topologically conjugate to the 
Penrose tiliing dynamical system. 
\end{abstract}

\subjclass[2020]{37B52, 37B51, 37B05}
\keywords{Penrose tiling, Wang tiles, tiling dynamical system, directional expansiveness.} 

\maketitle

\section{Introduction}

Let  $X$ be a compact metric space with metric $\rho$. 
A homeomorphism 
$T$ of $X$ 
is called {\em expansive} (see  \cite{Wal}) if there exists a 
$\delta>0$  
so that  $\rho(T^n(x),T^n(y))<\delta$ for all $n\in\N$ implies $x=y$. 
Expansiveness means 
two distinct points cannot always stay 
close. 
Examples of expansive homeomorphisms include Anosov diffeomorphisms, Axiom A diffeomorphisms 
restricted to their non-wandering sets, and finite alphabet \emph{subshifts}. 
It was shown in  \cite{Reddy} and \cite{KR} that every expansive homeomorphism is a factor 
of a finite alphabet subshift, although the converse is false \cite{Down}. 

The definition of expansiveness readily generalizes to continuous $\Z^d$-actions $T$
on compact metric spaces
$X$. However, when $d>1$,  $\Z^d$-actions $T$ offer additional possibilities. 
Following in the footsteps of 
Milnor's theory of directional 
entropy (see \cite{Milnor}, \cite{Milnor1}), 
Boyle and Lind \cite{BL} defined and studied directional expansiveness for 
$\Z^d$-actions. In this theory, an $e$-dimensional subspace $\cV\subseteq\R^d$,$e<d$,
 is called an {\em $e$-dimensional direction}.
Boyle and Lind \cite{BL} showed that for any $\Z^d$ action $T$, 
the set  $\E_e(T)$ of $e$-dimensional  expansive 
directions is open in the Grassmanian manifold ${\mathbb G}_{d,e}$,
and the set $\N_e(T)$ of non-expansive directions is always non-empty.
For every possible proper open set $\cO\subseteq{\mathbb G}_{2,1}$ with $\#(\cO^c)>1$,
 except for the case  $\#(\cO^c)=1$,  
 Boyle and Lind \cite{BL} gave examples of $\Z^2$ actions $T$ with set $\E_1(T)=\cO$.
Hochman \cite{Hoch} later provided examples with $\#(\N_1(T))=1$.  

A notion of expansiveness for flows $T$
(i.e., $\R$ actions) on compact metric spaces $X$ was introduced by Bowen 
and Walters in \cite{BW}. A flow $T$ is \emph{expansive} if for all $\epsilon>0$, there is a
$\delta>0$, so that for any $x,y\in X$ and any homeomorphism $h:\R\to\R$ with $h(0)=0$, 
 $\rho(T^t x,T^{h(t)} y)<\delta$ for all $t\in\R$ implies 
$y=T^s x$ for some $|s|<\epsilon$.
They proved that any fixed point free expansive flow on a 1-dimensional space $X$ 
is a quotient of a 
continuous suspension of a finite alphabet subshift. 
In the context of tiling dynamical systems $T$, Frank and Sadun \cite{FS} studied two 
types of expansiveness for $\R^d$-actions, which we call here 
\emph{weak}  and \emph{strong} expansiveness.
 Strong expansiveness implies weak expansiveness,  but not conversely in general. However,  
Frank and Sadun \cite{FS} showed that the converse holds 
for tiling dynamical systems satisfying the finite local complexity (FLC) property,
(Definition~\ref{flc}). 

In this paper, we define directional versions of 
both weak and strong expansiveness for
an $\R^d$ action $T$ on a compact metric space $X$. 
The definition, which requires some care, is given in Section~\ref{derd}. 
And while weak directional expansiveness does not generally imply strong directional expansiveness,
we show they are 
equivalent for finite local complexity (FLC)  tiling 
dynamical systems. Also, in the FLC  case, directional expansiveness is equivalent to a natural coding property.

Once the basic theory of directional expansiveness for $\R^d$-actions has been established, we move to our main study:  
describing the expansive directions for the Penrose tiling dynamical system. 
Our main theorem is the following:

\begin{theorem}\label{mainthm}
For the Penrose tiling  dynamical system $(\sX,\R^2,T)$, (which is FLC),   
$($see Section~{\rm \ref{pt}}$)$, 
there are exactly five $1$-dimensional 
non-expansive directions:
the five directions perpendicular to $5^{th}$ roots of unity. 
\end{theorem}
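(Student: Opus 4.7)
My plan is to use the equivalence, established earlier in the paper for FLC tiling systems, between directional expansiveness and the coding property that a sufficiently wide strip in the given direction uniquely determines the tiling. The argument then splits into two halves: (i) showing each of the five directions perpendicular to a $5^{\text{th}}$ root of unity fails this coding property, and (ii) showing every other $1$-dimensional direction satisfies it. Since $\sX$ is invariant under rotation by $2\pi/5$, once a single direction of type (i) has been handled, the remaining four are obtained by rotation; so only one direction in each half of the argument requires real work.

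For (i), I would exhibit a pair of distinct tilings $x,y\in\sX$ that agree on an entire half-plane bounded by a line $L$ perpendicular to a fixed $5^{\text{th}}$ root of unity; this immediately violates any coding by strips of finite width in direction $L$. The cleanest construction uses the cut-and-project description of $\sX$, in which Penrose tilings arise as projections of a strip in a higher-dimensional lattice. Shifting the acceptance window by a small vector along one of the five ``singular'' rays of the internal space alters only tiles on one side of a physical-space line perpendicular to a $5^{\text{th}}$ root of unity, yielding the required $x,y$. Equivalently, one can invoke the classical Conway worms / Ammann-bar decorations, which run precisely in these five directions and whose shift ambiguity directly produces half-plane-coincident but globally distinct tilings.

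For (ii), the main obstacle, I need to show that whenever $\ell'$ is \emph{not} perpendicular to a $5^{\text{th}}$ root of unity, some strip width $w$ suffices to code tilings uniquely. My plan is to combine transversality with the unique composition (desubstitution) property of the Penrose substitution $\sigma$. All edges in a Penrose tiling are parallel to $10^{\text{th}}$ roots of unity, hence to the same five lines through the $5^{\text{th}}$ roots of unity; so a direction $\ell'$ outside the exceptional set makes a nonzero angle with every edge, with $\theta_0 := \min_k |\sin\angle(\ell',\text{edge}_k)|>0$. A strip of width $w$ in direction $\ell'$ therefore contains a patch with enough transverse tile edges to determine the position of whole super-tiles. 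Applying $\sigma^{-1}$, two tilings agreeing on such a strip have preimages agreeing on a strip of width $\lambda w - C$, where $\lambda=(1+\sqrt5)/2$ is the inflation constant and $C$ is a boundary correction depending on $\theta_0$. Choosing $w$ so that $\lambda w - C > w$ and iterating forces agreement on arbitrarily large regions, hence everywhere, so $x=y$.

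The principal obstacle is part (ii): turning the heuristic ``only five directions admit fault lines'' into a quantitative desubstitution argument. The key point is that the lower bound $\theta_0>0$ on the transverse angle is exactly what breaks down in the five exceptional directions, since there a line in direction $\ell'$ is parallel to a family of tile edges and the induced strip can miss arbitrarily long runs of edges, defeating the inflation bound. This dichotomy, between transversality-driven rigidity and fault-line propagation, is what pins the count of non-expansive directions to exactly five.
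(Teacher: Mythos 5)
Your part (i) is essentially the paper's own argument (Proposition~\ref{firstpart}): a worm (Conway worm / de Bruijn singular line) runs perpendicular to a $5$th root of unity, and flipping it yields two distinct tilings that agree off a bounded strip, hence on every parallel strip $\cV^r$ disjoint from it. The fatal gap is in part (ii): your dichotomy picks out the wrong five directions. The edges of the Penrose rhombs are parallel to the $5$th roots of unity $\bv_j$ (angles $0^\circ,36^\circ,72^\circ,108^\circ,144^\circ$ modulo $180^\circ$), whereas the non-expansive directions are the perpendiculars $\bv_j^\perp$ (angles $18^\circ,54^\circ,90^\circ,126^\circ,162^\circ$). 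These two families are disjoint, because no two of the edge directions differ by $90^\circ$. So a direction perpendicular to a $5$th root of unity is \emph{not} parallel to any tile edge; it makes angle at least $18^\circ$ with every edge, your constant $\theta_0$ is bounded below for it, and your part (ii) hypothesis is satisfied by it. If your argument were valid it would therefore prove that each $\bv_j^\perp$ is expansive, contradicting your own part (i) and the theorem; it proves too much. (Symmetrically, the genuinely edge-parallel directions such as $\bv_0$ have $\theta_0=0$, so your method cannot handle them, yet the theorem asserts they \emph{are} expansive.) The fault lines are the worms, i.e.\ the Ammann-bar / grid-line directions $\bv_j^\perp$, so the transversality that matters is to the grid lines, not to the tile edges.

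Independently of this, the desubstitution step is unjustified and has the scaling backwards. Composition into supertiles is a local (MLD) operation, so if $x,y$ agree on a strip of width $w$, their supertile decompositions agree on a strip of width $w-C$; after rescaling by $1/\gamma$ the desubstituted tilings agree on a strip of width $(w-C)/\gamma$, which \emph{shrinks} under iteration. Your claimed widening to $\lambda w - C$ asserts that the tiles inside the strip determine supertiles extending well outside it; that is exactly the rigidity statement to be proved, and transversality to edges supplies no mechanism for it. Moreover the whole problem is invariant under the substitution $C'$ (it conjugates $T$ to a rescaled action and carries direction-$\cV$ strips to direction-$\cV$ strips), so a purely substitution-theoretic bootstrap cannot decide whether a given direction is expansive: some direction-dependent input is needed. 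The paper's input is de Bruijn duality: a strip transverse to $\bv_0^\perp$ and $\bv_1^\perp$ crosses every bent $0$- and $1$-grid line, hence contains a Wang patch from every row and column; the symbols of these patches form a $\Z^2$ Sturmian sequence that determines the pentagrid parameter, and local freeness pins down the translate (Proposition~\ref{secondpart}); for singular tilings, transversality to all five $\bv_j^\perp$ places a filled hexagon or decagon of each worm or cartwheel inside the strip, determining the filling (Proposition~\ref{thirdpart}). Your proposal would need to be rebuilt around transversality to the grid-line directions, with some such coding or rigidity input replacing the inflation step.
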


Our  proof uses the translation equivariant version of 
 de Bruijn's \cite{DB} duality theory for pentagrid tilings, used in \cite{Robinson1}
to prove that the Penrose tiling dynamical system is 
an almost 1:1 extension of a Kronecker $\R^2$-action on the 4-torus $\T^4$ (stated here as Theorem~\ref{RPenrose}). 
Our proof of Theorem~\ref{mainthm} takes a closer look at factor map $\varphi$ in the proof of   
Theorem~\ref{RPenrose} in \cite{Robinson1}.

We also study Rapheel Robinson's  set of $24$ aperiodic Penrose Wang tiles (see \cite{GS}) and some related 
dynamical systems. In particular, we show the  $\Z^2$ subshift of finite type $S$ corresponding to the Penrose Wang tiles 
is minimal, uniquely ergodic with pure point spectrum.
We compute its point spectrum group and 
expansive directions. 
\vskip -.3in
\subsection{Conventions}
We will identify a $d$-tuple ${\bw}=(w_0,w_1,\cdots, w_{d-1})\in\mathbb{R}^d$ 
with a $d\times 1$ column vector $[w_0 w_1 \cdots w_{d-1}]^t$. The usual dot product is denoted by $\bw\cdot\bv$.
The vectors $\be_0,\be_1,\dots,\be_{d-1}$ denote the usual standard basis for $\R^d$.
All norms are Euclidean, 
and for $d\ge 2$, 
$B_r(\bw):=\{\bv\in\R^d: ||\bv-\bw||<r\}$ denotes the open radius $r$ ball (or disc in the case $d=2$)
in $\R^d$.
For $d> 1$,  $\T^d:=\R^d/\Z^d$, with  $\T:=\R/\Z$ and $S^1=\{z\in\C:|z|=1\}$. We often identify $\T^d$ with $[0,1)^d$.
For $\bw\in\R^d$ we write $\lfloor \bw\rfloor\in\Z^d$ for the coordinate-wise floor,
and define $\{\bw\}:= \bw- \lfloor \bw\rfloor\in[0,1)^d$.
Throughout, we let $\gamma:=\frac{1+\sqrt{5}}{2}\approx 1.6180\dots$ and $\alpha:=1/\gamma=\gamma-1\approx 0.6180\dots$.

\subsection{Acknowledgements}
Most of the results in this paper are from the first author's 2021 Ph.D. dissertation \cite{Jang}
at the George Washington University. Theorem~\ref{mainthm} answers a question Mike Boyle 
once asked the second author. The authors thank S\'ebastien Labb\'e for pointing out that 
Figure~\ref{markov} was mislabeled in an earlier version.

\section{Expansiveness and directional expansiveness}\label{derd}

\subsection{Group actions} \label{ga}
Let $X$ be a compact metric space with metric $\rho$, 
and let $G$ be a locally compact complete metric abelian  group.  
In this paper, the non-compact group $G$  
will always be  $\Z^d$, $\R^d$, or a 
subgroup of one of these, and the metric on $G$ will always be 
the Euclidean norm $||\cdot||$. 

A {\em $G$-action} $T$  
on $X$ is a continuous mapping 
$T:G\times X \to X$, written $T^\bg x:=T(\bg,x)$, 
 satisfying $T^\bZr={\rm id}$ and $T^{\bg+\bh}=T^\bg\circ T^\bh$ for all $\bg,\bh\in G$. 
We  assume the acting group $G$ is non-compact and
think of a $G$-action as a \emph{dynamical system}, denoted $(X,G,T)$ or sometimes $T$ for short.
A $G$-action $T$  is 
\emph{free}, if  every $x\in X$ is  \emph{aperiodic}:
$T^\bg x=x$ 
implies $\bg={\bZr}$, and  \emph{locally free} if there exists a $\omega>0$ so that 
$T^\bg x=x$ implies $||\bg||> \omega$. 

A dynamical system $(X,G,T)$ is {\em minimal} if  any closed 
$T$-invariant subset $K\subseteq X$ satisfies either $K$ or $K^c$ is empty, or 
equivalently, $X=\overline{O_T(x)}$ for some $x\in X$, where $O_T(x):=
\{T^\bg x:\bg\in G\}$ is the \emph{orbit} of $x$ under $T$,
(see \cite{cdevries}). 
We say
 $(X,G,T)$ is 
 {\em uniquely
ergodic} if there is a unique $T$-invariant Borel probability measure $\mu$ on $X$. 
\emph{Strictly ergodic} means both minimal and uniquely ergodic, either of 
which implies $T$ is free. 

\begin{definition}\label{equivalences}
For $(X,G,T)$ and $(Y,G,K)$, we
say $K$ is a \emph{factor} of $T$ 
(or $T$ is an \emph{extension} of $K$) if there is 
a continuous surjection $\varphi:X\to Y$, the \emph{factor map}, 
such that $K^\bg( \varphi(x))=\varphi(T^\bg x)$ for all $\bg\in G$.
An extension is {\em almost $1\!\!:\!\!1$} 
if  $Y_1:=\{x\in Y:\#(\varphi^{-1}(x))=1\}$ is 
dense $G_\delta$. 
In the case $\varphi$ is a homeomorphism,  $K$ and $T$ are said to be \emph{topologically conjugate}.
We say $K$ and $T$ \emph{continuously orbit equivalent}\footnote{This is called ``conjugacy
of flows'' in \cite{BW} but we avoid that terminology as it is too close to ``topological conjugacy''. Sometimes this is called ``flow equivalent''.} if there is a 
homeomorphism $\psi:X\to Y$ so that 
$\psi(O_{T}(x))=O_{K}(\psi(x))$. 
\end{definition}

\subsection{Kronecker dynamical systems}
(See e.g., \cite{Robinson3}, \cite{Wal}) The \emph{dual group}  $\widehat G$ 
of a locally compact metric abelian group $G$ (see e.g., \cite{Wal}) is the locally compact metric abelian group 
of continuous homomorphisms ${\chi}:G\to S^1$ (in the compact-open topology).
In particular, $\widehat{\R^d}=\R^d$ with 
$\chi_\bh(\bt)=e^{2\pi i (\bh\cdot\bt)}$, and 
$\widehat{\Z^d}=\T^d$ 
with $\chi_\bh(\bn)=e^{2\pi i (\bh\cdot\bn)}$, both uncountable.
But the compact group $\T^d$  has a countable discrete dual $\widehat{\T^d}=\Z^d$
with  $\chi_\bn(\bh)=e^{2\pi i (\bh\cdot\bn)}$.

An \emph{eigenfunction} $f$ for a dynamical system $T$, with \emph{eigenvalue}
$\chi\in \widehat G$, is a continuous\footnote{
It is also possible for some $T$ to have discontinuous eigenfunction, but for the examples we consider here,
this is not the case.
}
 complex valued function satisfying 
$f(K^\bg y)=\chi(\bg)f(y)$.  
The set of eigenvalues $\Sigma_T\subseteq\widehat G$ is called the \emph{point spectrum} of $T$.
If $T$ is minimal, every eigenvalue $f$ is simple, $|f|=\text{const.}$, and $\Sigma_T\subseteq\widehat G$ is a countable discrete subgroup that 
is a topological conjugacy  invariant.

For an infinite compact metric abelian 
 group $Y$, let $i:G\to Y$ be a continuous injective homomorphism. 
We define an \emph{algebraic Kronecker} dynamical system 
$(Y,G,K)$ to be the free $G$-action $K^\bg y:=y+i(\bg)$. It satisfies
$\widehat K\cong \Sigma_K$ via $\chi\mapsto \chi\circ i$.
Minimality is equivalent to  
$\overline{i(G)}=K$, or  $\chi\mapsto \chi\circ i$ injective, (see \cite{Robinson3}). More generally, 
we call $(X,G,T)$ \emph{Kronecker} if it is topologically conjugate
to an algebraic Kronecker dynamical system. For a free Kronecker system, the
Halmos-von Neumann Theorem, (see e.g., \cite{HK}) says  $\Sigma_K$ is a complete topological conjugacy invariant. 
Also, every countable dense subgroup $\widehat Y\subseteq \widehat G$ occurs 
as $\Sigma_K$ for some $K$.

Now consider $(X,G,T)$ and a Kronecker system $(Y,G,K)$ so that 
$\Sigma_T=\Sigma_K$. Then $(Y,G,K)$ is a  factor of $(X,G,T)$,
called its \emph{Kronecker} (or  \emph{maximal equicontinuous}) factor. If the factor map $\varphi$ 
is almost 1:1 then we call $(X,G,T)$ \emph{almost automorphic}, and call
$\mathfrak{M}_T:=\{\#(\varphi^{-1}(y)):y\in Y\}\subseteq\N\cup\{+\infty\}$, 
the \emph{preimage multiplicities} (called the \emph{thickness spectrum} in \cite{Robinson1}, and 
called \emph{set of fiber cardinalities} in \cite{sebastian}).

\subsection{$\Z^2$ subshifts}\label{subshifts}

For finite  \emph{alphabet} $\cA=\{0,1,\dots,n-1\}$, $n\ge 2$, we call  
the product topology compact set $\cA^{\Z^2}= 
\{z=(z_\bm)_{\bm\in \Z^2}:z_\bm\in\cA\}$ 
the $\Z^2$ \emph{full shift} space,  
equipped with the 
$\Z^2$ \emph{shift} action $S$ 
defined by
$(S^\bn z)_\bm=z_{\bm-\bn}$. 
The $\Z^2$ \emph{full shift} dynamical system is $(\cA^{\Z^2},\Z^2,S)$, and a $\Z^2$ \emph{subshift} is 
$(Z,\Z^2,S)$ where $Z\subseteq \cA^{\Z^2}$ is any closed and $S$-invariant.

Given a rectangle $B\subseteq \Z^2$, $\#(B)>1$,   
a \emph{word} on $B$ is $b=(b_\bm)_{\bm\in B}\in \cA^B$. 
We say the word $b=(b_\bm)_{\bm\in B}$ 
occurs in $z\in\cA^{\Z^2}$ 
if $(z_{\bm-\bn_0})_{\bm\in B}=(b_\bm)_{\bm\in B}$ for some $\bn_0\in \Z^2$.
We call a subshift $Z$ 
a \emph{subshift of finite type} (SFT) if there is a \emph{finite} set $\cF$ of 
\emph{forbidden words}, so that  
now word in $\cF$ occurs in any $z\in Z$.
For a special case, 
called a $1$-step SFT,
we let  $\cF=\cF_0\cup\cF_1$ where 
$\cF_j\subseteq\cA^{\{{\bZr},\be_j \}}$.
A $1$-step SFT can also be described in terms of 
\emph{permitted words} $\cP_j=\cA^{\{{\bZr},\be_j \}}\backslash\cF_j$. 

Let $(Z,\Z,S)$ be a $\Z$ subshift, 
and let 
$\cA\otimes\cA:=\{a\otimes b:a,b\in \cA\}$.
We define the \emph{tensor product}  of $z,z' \in Z$ by 
 $z\otimes z\,':=(z_{n_0}\otimes z'_{n_1})_{(n_0,n_1)\in\mathbb Z^2}\in (\cA\otimes\cA)^{\Z^2}$, and
 define the \emph{tensor square} 
  $Z\otimes Z\subseteq (\cA\otimes\cA)^{\Z^2}$ 
 to be the set of all tensor product sequences $z\otimes z'$ for $z,z'\in Z$. The $\Z^2$ \emph{shift} 
 map on $Z\otimes Z$ is given by 
 $(S\otimes S)^{(k_0,k_1)}(z_{n_0}\otimes z_{n_1})_{(n_0,n_1)\in\mathbb Z^2}=(z_{n_0-k_0}\otimes z_{n_1-k_1})_{(n_0,n_1)\in\mathbb Z^2}$.
The tensor square dynamical system $(Z\otimes Z, \Z^2,S\otimes S)$
is a $\Z^2$ subshift,  
which is   
minimal,  uniquely ergodic, or finite type, if and only if $(Z,\Z,S)$ is the same.

\subsection{Expansiveness for $\Z^d$ actions}

A $\Z^d$ dynamical system $(X,\Z^d,T)$ is
 {\em expansive} if there is a $\delta>0$ so that 
$\rho(T^\bn x ,T^\bn y)<\delta$ for all 
$\bn\in \Z^d$ implies $x=y$. 
Expansiveness is an invariant of topological conjugacy (see \cite{BW} for the case $d=1$).

Now we want to 
 describe expansiveness for an $\R^d$ dynamical system $(X,\R^d,T)$, but simply 
replacing $\Z^d$ with $\R^d$ in the definition above
does not quite work (see \cite{BW}). Actually, we will describe two versions of expansiveness 
for $\R^d$ dynamical systems, both of which were
studied for $d=1$ by Bowen and Walters \cite{BW},
(see also Frank and Sadun \cite{FS}).
We call them \emph{weak} and \emph{strong expansiveness}. 

\begin{definition} \label{DFrankSadun}
Let $(X,\R^d,T)$ 
be a  locally free 
$\R^d$ dynamical system.
({\it i\,}) $T$
is called \emph{weakly expansive}
if for all $\epsilon>0$ there exists  
a $\delta>0$ 
so that for any $x,y\in X$, 
 $\rho(T^{\bt}x, T^{\bt}y)<\delta$ for all ${\bt}\in\R^d$ implies 
$y=T^{{\bs}_0}x$ for some $||{\bs}_0||<\epsilon$. 
({\it ii\,})  $T$ is called \emph{strongly expansive} if 
for all $\epsilon>0$  there is a $\delta>0$ 
so that 
for any $x,y\in X$ 
and any homeomorphism 
$h:\R^d\to\R^d$ with 
$h({\bZr})={\bZr}$, 
$\rho(T^{\bt}x, T^{\bh(\bt)}y)<\delta$ for all $\bt\in\R^d$
implies
$y=T^{{\bs}_0}x$ for some $||{\bs}_0||<\epsilon$.  
\end{definition}

\begin{remarks}\ 
\begin{enumerate}
\item {Both weak and strong expansiveness are invariants of topological conjugacy.}
\item {The assumption ``locally free'' is a convenience that eliminates trivial  cases  like non isolated
fixed points (see \cite{BW})}. Locally free is automatic for FLC tiling dynamical systems (see Remark~\ref{lf}).  
\item In the case $d=1$, Bowen and Walters  \cite{BW} call our 
 ``strong expansiveness'' ({\it ii\,})  ``expansiveness''. They also discuss 
our ``weak expansiveness''  ({\it i\,}), without naming it, but  dismiss it  
because it is not an 
invariant of ``conjugacy of flows'': our ``continuous orbit equivalence''. 
\item Other versions of ({\it i\,}) and ({\it ii\,})  ({\it a priori} a little stronger) were 
studied for tiling dynamical systems by Frank and Sadun \cite{FS}.
Their versions of ({\it i\,})  and ({\it ii\,}) , which are equivalent to ours and each other in the FLC case, 
have no $\epsilon$, and the conclusion is  $||\bs_0||<\delta$. They refer to ({\it i\,}) 
 as ``geometric expansiveness''.  
\item There are also versions of ({\it i\,})  and ({\it ii\,}) 
where the conclusion is simply that $y=T^{\bs_0}x$, i.e., with no condition on $||\bs_0||$. 
 Gura \cite{Gura} calls ({\it i\,})  ``separating'' for $d=1$, and shows  the horocycle flow
 satisfies ({\it i\,})  but not ({\it ii\,}) .  
For $d=1$, Katok and Hasselblatt \cite{KH} call this version of ({\it ii\,})  for $d=1$ ``expansiveness''. 
All (weak and strong) are equivalent in the FLC case. 
\end{enumerate} 
\end{remarks}

\subsection{Directional Expansiveness for $\Z^d$} 

For an $e$-dimensional direction (i.e., a subspace) $\cV\subseteq\R^d$,  $1\le e<d$, we define
$\cV^r:= \bigcup_{{\bv}\in \cV}B_r({\bv})$. 

\begin{definition}[Boyle and Lind, \cite{BL}]\label{BLexp}
Let $(X,\Z^d,T)$ be a $\Z^d$ action and  
let $\cV\subseteq\mathbb{R}^d$ be  a $1\le e<d$ dimensional direction.
Then $T$ is  {\em expansive} in the direction $\cV$ if there exist a
$\delta>0$ and an $r>0$ 
such that if $x,y\in X$ satisfy $\rho(T^\bn x,T^\bn y)\leq\delta$ 
for all $\bn\in \Z^d\cap \cV^r$, 
then $x=y$. 
\end{definition} 

In 
\cite{BL}, the sets of $e$-dimensional expansive and non-expansive directions are denoted  by $\E_d(T)$ and $\N_d(T)$.
Let $(Z,\Z^d,S)$ be a $\Z^d$ subshift, and let $J\subseteq\R^d$. We say a set $J$ 
\emph{determines} $Z$ if for $z,z'\in Z$, $z[J]=z'[J]$ implies $z=z'$. 
In the paragraph following 
(2--1) in \cite{BL}, Boyle and Lind give the following useful characterization of directional 
expansiveness for a subshift.

\begin{lemma}[Boyle and Lind, \cite{BL}]\label{asashift}
For a $\Z^d$ subshift $(Z,\Z^d,S)$ and an $e$-dimensional direction $\cV\subseteq\R^d$, $1\le e<d$, 
$\cV$ is an expansive direction if and only if $\cV^r$ determines $Z$ for some $r>0$.
\end{lemma}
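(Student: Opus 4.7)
The plan is to translate the analytic expansiveness condition into the combinatorial determining condition via the standard product-topology metric on the subshift, where $\rho(z,z')\le 2^{-N}$ precisely when $z$ and $z'$ agree on $[-N,N]^d\cap\Z^d$. Under this correspondence, $\rho(S^\bn z,S^\bn z')$ being small forces $z$ and $z'$ to agree on a window centered at $-\bn$. Two geometric observations drive the argument: since $\cV$ is a subspace, the thickening $\cV^r=\cV+B_r(\bZr)$ is symmetric ($\cV^r=-\cV^r$), and $\cV^r+B_{r'}(\bZr)=\cV^{r+r'}$.

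For the forward direction, assume $\cV$ is expansive with witnesses $\delta>0$ and $r>0$. First I would choose $r'>0$ large enough that any pair of points in $Z$ agreeing on $B_{r'}(\bZr)\cap\Z^d$ lies within $\rho$-distance $\delta$; this is possible because small $\rho$-balls are exactly cylinders over central windows. Then I would verify that $\cV^{r+r'}$ determines $Z$: if $z[\Z^d\cap\cV^{r+r'}]=z'[\Z^d\cap\cV^{r+r'}]$, then for each $\bn\in\Z^d\cap\cV^r$ the translate $-\bn+B_{r'}(\bZr)$ sits inside $\cV^r+B_{r'}(\bZr)=\cV^{r+r'}$ by symmetry, so $S^\bn z$ and $S^\bn z'$ agree on $B_{r'}(\bZr)\cap\Z^d$ and therefore $\rho(S^\bn z,S^\bn z')<\delta$. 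Expansiveness then forces $z=z'$.

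For the reverse direction, assume $\cV^r$ determines $Z$. I would pick $\delta>0$ small enough that $\rho(z,z')<\delta$ implies $z_\bZr=z'_\bZr$ (any $\delta<1$ works for the standard metric). If $\rho(S^\bn z,S^\bn z')<\delta$ for every $\bn\in\Z^d\cap\cV^r$, then $z_{-\bn}=z'_{-\bn}$ for each such $\bn$; since $-\bn$ ranges over $\Z^d\cap\cV^r$ as $\bn$ does, $z$ and $z'$ agree on all of $\Z^d\cap\cV^r$, and the determining hypothesis yields $z=z'$.

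The argument is essentially bookkeeping, so there is no real obstacle. The only point requiring care is that the same value of $r$ need not serve both directions of the equivalence: one must inflate $r$ to $r+r'$ when passing from expansiveness to the determining property, with $r'$ chosen so that a window of that radius controls the metric. The symmetry of $\cV^r$, which lets us identify the set of positions where $z$ and $z'$ must coincide with $\cV^r$ itself rather than its reflection, is the one small geometric input that keeps the argument clean.
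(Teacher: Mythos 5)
Your proof is correct. Note that the paper itself gives no proof of this lemma---it is quoted directly from the remark following (2--1) in Boyle--Lind---so there is nothing to diverge from; your argument is exactly the standard unwinding that the citation encapsulates: the symmetry $\cV^r=-\cV^r$, the Minkowski identity $\cV^r+B_{r'}(\bZr)=\cV^{r+r'}$, and the inflation from $r$ to $r+r'$ (needed so that a metric ball controlling $\delta$-closeness fits inside the thickened direction) are precisely the right bookkeeping, and both implications go through as you wrote them.
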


\begin{lemma}\label{recover}
Let $(Z\otimes Z,\Z^2,S\otimes S)$ be the tensor square 
of a $\Z$-subshift. For $J\subseteq\Z^2\subseteq\R^2$, 
suppose  $\pi_0(J)=\Z$ and $\pi_1(J)=\Z$, where $\pi_0,\pi_1:\Z^2\to\Z$ 
are the coordinate projections. 
Then $J$ determines $Z\otimes Z$.
\end{lemma}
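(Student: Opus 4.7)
The plan is to unwind the definition of the tensor square and reduce the statement to two one-dimensional recovery problems, one for each factor.

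First I would take $w, w' \in Z \otimes Z$ with $w[J] = w'[J]$ and write them in the canonical form $w = z \otimes z'$ and $w' = y \otimes y'$ with $z, z', y, y' \in Z$. By the definition of the tensor square, the $(n_0, n_1)$-entry of $w$ is the symbol $z_{n_0} \otimes z'_{n_1} \in \cA \otimes \cA$, and similarly for $w'$. The key observation is that elements of $\cA \otimes \cA$ are just formal pairs, so $a \otimes b = a' \otimes b'$ holds in $\cA \otimes \cA$ if and only if $a = a'$ and $b = b'$. Consequently, the hypothesis that $w$ and $w'$ agree on $J$ forces
\begin{equation*}
z_{n_0} = y_{n_0} \quad \text{and} \quad z'_{n_1} = y'_{n_1} \quad \text{for every } (n_0, n_1) \in J.
\end{equation*}

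Next I would use the two surjectivity hypotheses separately. Given any $n_0 \in \Z$, the assumption $\pi_0(J) = \Z$ provides some $n_1 \in \Z$ with $(n_0, n_1) \in J$, and the displayed equation then gives $z_{n_0} = y_{n_0}$; since $n_0$ was arbitrary, $z = y$. The symmetric argument using $\pi_1(J) = \Z$ yields $z' = y'$. Therefore $w = z \otimes z' = y \otimes y' = w'$, which is exactly what it means for $J$ to determine $Z \otimes Z$.

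There is essentially no obstacle here, beyond being careful with notation: the lemma is really a statement about the product structure of the alphabet $\cA \otimes \cA$ and the fact that a tensor product sequence is rigidly determined by the two sequences that generate it. The entire argument is a bookkeeping exercise on the definition of $Z \otimes Z$ given in Section~\ref{subshifts}, and no dynamical or combinatorial content of $Z$ (minimality, finite type, etc.) is needed.
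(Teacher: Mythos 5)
Your proof is correct and is essentially the paper's argument: both rest on the fact that an entry $z_{n_0}\otimes z'_{n_1}$ splits into a first factor depending only on $n_0$ and a second depending only on $n_1$, and both use $\pi_0(J)=\Z$ and $\pi_1(J)=\Z$ to supply, for each coordinate, a point of $J$ carrying the needed factor. The paper phrases this as reconstructing the entry at an arbitrary $(n_0,n_1)$ from $(n_0,m_1),(m_0,n_1)\in J$, while you phrase it as showing two elements agreeing on $J$ have equal factor sequences; these are the same argument.
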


\begin{proof}
To find $z_{n_0}\otimes z'_{n_1}$ for arbitrary $(n_0,n_1)\in\Z^2$ we use 
$\pi_0(J)=\Z$ and $\pi_1(J)=\Z$
to find $(n_0,m_1),(m_0,n_1)\in J$. Then we use the first factor of $z_{n_0}\otimes z'_{m_1}$, and 
the second factor of $z_{m_0}\otimes z'_{n_1}$ to make $z_{n_0}\otimes z'_{n_1}$.
\end{proof}

\begin{corollary}
For any tensor square 
subshift $(Z\otimes Z,\Z^2,S\otimes S)$, the non-expansive directions  
$\N_1(S\otimes S)$
consist of exactly the horizontal and vertical directions.
\end{corollary}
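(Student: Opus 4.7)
The plan is to establish both containments. First, I would verify that every $1$-dimensional subspace $\cV \subseteq \R^2$ that is neither horizontal nor vertical is expansive. Parametrize such $\cV$ as the span of $(1,m)$ with $m \neq 0$. For $r > \max(1/2,\, 1/(2|m|))$, every $n \in \Z$ is the first coordinate of an integer point within vertical distance $1/2$ of $(n,mn) \in \cV$, and every $k \in \Z$ is the second coordinate of an integer point within horizontal distance $1/(2|m|)$ of $(k/m, k) \in \cV$. Thus $J := \Z^2 \cap \cV^r$ satisfies $\pi_0(J) = \pi_1(J) = \Z$. Lemma~\ref{recover} then says $J$ determines $Z \otimes Z$, and Lemma~\ref{asashift} concludes $\cV \in \E_1(S \otimes S)$.

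Next, I would show the horizontal direction $\cV_0 = \R \cdot \be_0$ is non-expansive. For any $r > 0$, one has $\cV_0^r \cap \Z^2 \subseteq \Z \times \{k \in \Z : |k| \le r\}$. Since a tensor square whose underlying $\Z$-subshift has only one element is itself a single point (with all directions vacuously expansive), the content of the corollary is in the case where $Z$ is infinite. In that case a pigeonhole argument on the finitely many possible patterns over the window $\{-\lfloor r\rfloor,\dots,\lfloor r \rfloor\}$ produces distinct $z', z'' \in Z$ agreeing on the window. Choosing any $z \in Z$, the elements $z \otimes z'$ and $z \otimes z''$ of $Z \otimes Z$ are distinct but agree on $\cV_0^r \cap \Z^2$, so $\cV_0^r \cap \Z^2$ does not determine $Z \otimes Z$ for any $r > 0$. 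By the contrapositive of Lemma~\ref{asashift}, $\cV_0 \in \N_1(S \otimes S)$. The vertical direction $\R \cdot \be_1$ follows by the symmetric argument, swapping the roles of the two tensor factors.

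Since this is a direct application of Lemmas~\ref{recover} and~\ref{asashift}, no serious obstacle arises. The one mild subtlety is that the required thickness $r$ in the first step grows like $1/|m|$ as the slope $m$ of $\cV$ tends to $0$; this is exactly what one should expect, since the horizontal direction itself is non-expansive, and it is harmless because Lemma~\ref{asashift} only requires the existence of \emph{some} $r > 0$ for each expansive direction.
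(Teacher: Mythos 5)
Your proof is correct and takes essentially the same route as the paper's: for a non-axis direction $\cV$, lattice points within bounded distance of $\cV$ realize every first and every second integer coordinate, so Lemma~\ref{recover} and Lemma~\ref{asashift} give expansiveness; for the two axes, tensoring a fixed $z$ with two elements of $Z$ that agree on a finite window but differ outside it defeats every $r$. Two small points need repair. First, the claim that every $k\in\Z$ is the second coordinate of an integer point within horizontal distance $1/(2|m|)$ of $(k/m,k)$ is false for $|m|>1$ (the correct bound is $1/2$, attained by rounding $k/m$); this is harmless, since the Euclidean distance to the line is at most the horizontal distance, so any $r>1/2$ --- in particular yours --- still puts that rounded point in $\cV^r$. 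Consequently a single uniform $r$ works for all slopes, and your closing remark that $r$ must grow like $1/|m|$ as $m\to 0$ is mistaken, though again harmless, since only existence of some $r$ is needed. Second, your dichotomy ``one element versus infinite $Z$'' omits finite $Z$ with two or more elements: there the pigeonhole step fails, and in fact the corollary itself fails (all elements of a finite subshift are periodic, and then every direction of $S\otimes S$ is expansive). So the statement tacitly assumes $Z$ is infinite; the paper's proof makes the same silent assumption when it posits $z',z''$ agreeing on $|j|\le r$ but differing at some $|k|>r$, and your pigeonhole observation is precisely the justification of that step which the paper omits.
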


\begin{proof}
Let $\cV$ be horizontal and let $r>0$. Find $z', z''\in Z$ so that 
$z'_j=z''_j$ for $|j|\le r$ but $z'_k\ne z''_k$ for some $|k|>r$. Then for any $z\in Z$, 
$(z\otimes z')[\cV^r]=(z\otimes z'')[\cV^r]$, 
since $z_i\otimes z'_j=z_i\otimes z''_j$ for $i\in \Z$ and $|j|\le r$, i.e., $(i,j)\in\cV^r$.
But $(z\otimes z')\ne  (z\otimes z'')$, since $z'_k\ne z''_k$ implies $z_i\otimes z'_k\ne z_i\otimes z''_k$.
Thus  $\cV^r$ does not determine $z\otimes z'$ for any $r>0$.
The same argument works for the vertical direction.

Now suppose $\cV$ is neither vertical nor horizontal. Define 
$Q_{(n_0,n_1)}:=[n_0-\frac12,n_0+\frac12)\times [n_1-\frac12,n_1+\frac12)$.
Choose $r>0$ large enough that 
$J:=\bigcup_{\{\bn\in\Z^2:Q_\bn\cap \cV\ne\emptyset\}}Q_\bn\subseteq \cV^r$.
$\pi_0(J)=\Z$ and $\pi_1(J)=\Z$, so the same holds for $\cV^r$.
Lemma~\ref{recover} shows $\cV^r$ determines $Z\otimes Z$, 
and the result follows from Lemma~\ref{asashift}.
\end{proof}

\subsection{Directional Expansiveness for $\R^d$} 
It turns out that the correct definition of direction $\cV$ expansiveness for an $\R^d$ action $T$ is not 
simply the definition of 
expansiveness for $T$, restricted to $\cV$, even though such a definition does work in some directional theories.
The restriction of $T$ to $\cV$,  defined by $(T|_\cV)^\bt:=T^\bt$ for $\bt \in \cV$,
is a $\cV$ action, but can be made into an $\R^e$ action
by a choice of a basis (see e.g., \cite{BL}). This does not affect questions of expansiveness. 

\begin{proposition} 
Let $(X,\R^d,T)$ be a  locally free $\R^d$ action. 
Then for any  direction $\cV$, 
the restriction $(X,\cV,T|_\cV)$ to $\cV$ is not 
weakly or strongly expansive. 
\end{proposition}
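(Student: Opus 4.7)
The plan is to show that $(X,\cV,T|_\cV)$ fails weak expansiveness; failure of strong expansiveness then follows automatically, since strong expansiveness implies weak. The underlying idea is that, for any $x$, a translation $y=T^\bu x$ by a short vector $\bu$ transverse to $\cV$ produces a point whose $\cV$-orbit is uniformly close to the $\cV$-orbit of $x$, while local freeness of the ambient $\R^d$-action rules out writing $y=T^{\bs_0}x$ with $\bs_0\in\cV$ and $\|\bs_0\|$ small.

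First I would record the standard uniform equicontinuity of $T$ at the identity: since $X$ is compact and $T\colon \R^d\times X\to X$ is jointly continuous with $T^{\bZr}=\mathrm{id}$, for every $\delta>0$ there exists $\eta(\delta)>0$ such that $\|\bu\|<\eta(\delta)$ implies $\rho(T^\bu z,z)<\delta$ for all $z\in X$. Note also that $T|_\cV$ inherits local freeness from $T$ with the same constant $\omega>0$.

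Now set $\epsilon:=\omega/3$ and suppose, toward a contradiction, that $T|_\cV$ is weakly expansive, witnessed by some $\delta>0$. Because $\cV$ is a proper subspace of $\R^d$, I can pick $\bu\in\R^d\setminus\cV$ with $\|\bu\|<\min\{\eta(\delta),\,\epsilon\}$. Fix any $x\in X$ and put $y:=T^\bu x$. Using commutativity of the $\R^d$-action together with the equicontinuity bound,
\[
\rho(T^\bt x,\,T^\bt y)=\rho\bigl(T^\bt x,\,T^\bu(T^\bt x)\bigr)<\delta \qquad\text{for every } \bt\in\cV.
\]
Weak expansiveness then forces $y=T^{\bs_0}x$ for some $\bs_0\in\cV$ with $\|\bs_0\|<\epsilon$, so $T^{\bu-\bs_0}x=x$ with $\bu-\bs_0\ne\bZr$ (because $\bu\notin\cV$ while $\bs_0\in\cV$). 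Local freeness then demands $\|\bu-\bs_0\|>\omega$, which contradicts $\|\bu-\bs_0\|\le\|\bu\|+\|\bs_0\|<2\epsilon<\omega$.

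I do not anticipate a substantial obstacle here. The only mildly delicate points are that the equicontinuity bound must be obtained uniformly in $z\in X$ (which is why compactness of $X$ enters) and that $\epsilon$ must be chosen comfortably below $\omega/2$ so that the candidate $\bu-\bs_0$ is simultaneously short and nonzero, producing the clash with local freeness. Once both are in place the argument is essentially bookkeeping, and it works identically for strong expansiveness by the implication noted at the outset.
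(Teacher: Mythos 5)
Your proof is correct and is essentially the paper's own argument: both produce a small translate $y=T^{\bu}x$ transverse to $\cV$ (using compactness of $X$ to get the uniform bound $\rho(T^{\bu}z,z)<\delta$, which the paper packages as the continuous function $f(x,r)=\rho(x,T^{r\bp}x)$ on $[0,1]\times X$), then invoke weak expansiveness to write $y=T^{\bs_0}x$ with $\bs_0\in\cV$ small, and contradict local freeness via the nonzero short vector $\bu-\bs_0$. The only cosmetic differences are your choice $\epsilon=\omega/3$ versus the paper's $\epsilon<\omega/2$ and your explicit remark that failure of weak expansiveness implies failure of strong.
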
  

\begin{proof}
Suppose $T|_\cV$ is weakly expansive and choose 
$\epsilon<\omega/2$ where $\omega$ is the constant for locally free.
Let $\bp\in\R^d\backslash \cV$ with $||\bp||=1$, and define 
a continuous
real-valued function
on $[0,1]\times X$ by $f(x,r):=\rho(x,T^{r\bp}x)$. Then 
$f(0,x)=0$, and  $f(r,x)>0$ for $r<\omega/2$. For $\delta>0$, 
find $0<r<\omega/2$ so that 
$f(r,x)<\delta$ for all $x\in X$. 
Fix $x\in X$ and let $y=T^{r\bp}x$. 
Then for $\bt\in\cV$,  
$\rho(T^{\bt}x, T^{\bt}y)=\rho(T^{\bt}x, T^{r\bp}T^\bt x)=f(T^{\bt}x,r)<\delta$.
Now, $T|_\cV$ weakly expansive would mean $y=T^{r\bp}x=T^{\bs_0}x$ for 
some $\bs_0\in\cV$ with $||\bs_0||<\epsilon<\omega/2$. But 
then $T^{\bs_0-r\bp}x=x$. Since $|| \bs_0-r\bp||\le ||\bs_0||+r||\bp||<\omega$,
locally free implies $\bs_0=r\bp$, contradicting 
$r\bp\notin \cV$.
\end{proof}

\begin{definition}\label{directional}
Let $(X,\R^d,T)$ be a  locally free $\R^d$ action, and
let $\cV\subseteq \R^d$ be an $e$-dimensional direction, $1\le e<d$. 
({\it i}) 
$T$ is called \emph{weakly {\rm (}directionally{\rm )} expansive} in the direction $\cV$ if for all $\epsilon>0$
there is a 
$\delta>0$ such that for all $x,y\in X$, 
$\rho(T^{\bt}x, T^{\bt}y)<\delta$, for all ${\bt}\in \cV$ implies 
$y=T^{\bs_0}x$ for some $\bs_0\in \R^d$ with 
$||{\bs}_0||<\epsilon$. 
({\it ii}) 
$T$ is called \emph{strongly {\rm (}directionally{\rm )} expansive} in the direction $\cV$ if for all $\epsilon>0$
there is a  $\delta>0$ such that 
for any $x,y\in X$ and any  
$h:\cV\to \cV$ with $h({\bZr})={\bZr}$,  
 $\rho(T^{\bt}x, T^{h(\bt)}y)<\delta$ for all $\bt\in \cV$ implies $y=T^{\bs_0}x$ for some $\bs_0\in \R^d$ 
with $||{\bs}_0||<\epsilon$. 
\end{definition}

\section{Tiling dynamical systems}\label{tilings}

In this section we define finite local complexity (FLC)  $\R^2$ tiling spaces and $\R^2$ tiling dynamical systems,
and describe their basic properties (see e.g., \cite{Robinson} or \cite{BG}). 
Similar considerations hold for $\R^d$, $d>2$, but for simplicity we do not discuss them here.

\subsection{Basic definitions}\label{FLCtilings}

A {\em tile} is a subset $D\subseteq\R^2$ homeomorphic to a closed disk, with a Lebesgue measure 
zero boundary (e.g., a convex polygon).   
A {\em tiling} $x$ of $\R^2$ is a set of the tiles that cover $\R^2$, intersecting only on their boundaries. 
A {\em tiling patch} $x$ in $\R^2$ is a finite collection 
of the tiles whose union is a closed topological disc, and a 
{\em tiling swatch} $x$ is a collection of tiles whose union is closed and simply connected.

If $x$ is a tiling of $\R^2$, 
and $S\subseteq\R^2$, we let $x[S]$ be the smallest patch or swatch, the union of whose tiles contains $S$. 
Two tiles, patches, swatches or tilings are said to be \emph{equivalent} if they are congruent via a translation.
Sometimes geometrically equivalent tiles are ``decorated'' with colors or markings that make them inequivalent. We denote the translation of a tiling (or tile or patch or swatch) $x$ by ${\bt}\in\R^2$ by $T^{\bt}x:=\{D-{\bt}:D\in x\}$.
We call a set $p$ of inequivalent tiles
 in $\R^2$ a set of {\em prototiles}. 
A tiling (or path or swatch) $x$ \emph{by} $p$ means $x$ each tile in $x$ is equivalent to a tile in $p$.
We want to look at sets of all
tilings by $p$, but without further 
restrictions, such a  set is often too large. 

\begin{definition}\label{flc}
Let $p$ be a finite set of prototiles in $\R^2$. A \emph{local rule} is a finite set $p^{(2)}$ of 
inequivalent  $2$-tile patches by $p$.
We let  $X_p$ (or $p^*$) denote the set of all tilings (or patches) by $p$ such that every 
$2$-tile patch in any $x\in X_p$ (or any $y\in p^*$) is equivalent to a patch in  
$p^{(2)}$. We call $X_p$ the {\em finite local complexity} 
(abbreviated FLC)  {\em full tiling space} defined by $p$, (with  the
local rule $p^{(2)}$ being implicit). 
\end{definition}

We will always assume that $p$ and $p^{(2)}$ are such that 
$X_p\not=\emptyset$. This is a nontrivial assumption in general
(see Section~\ref{swangtiles}),
but is easy to ascertain in some cases
(see Example~\ref{rhombic}), 
or is known in some others (see Remark~\ref{defx}). 

There is a well known compact metric topology on any FLC tiling spaces $X_p$ 
in which two tilings are close if, after a small translation, they agree on a large ball around the origin. 
Here is one version of such a metric $\rho$  (see \cite{Robinson}, \cite{sadun}).

\begin{definition}\label{tmetric}
For each tile $D\in p$, let $\delta(D)=\inf\{||\br||:T^\br D\cap D=\emptyset\}$
and let $\Delta_p:=\min_{D\in p}\delta(D)$.
For $x,y\in X_p$,  $x\ne y$, let
$R(x,y):=\sup\left\{r: (T^{\br_x}x)[B_{r}({\bZr})]=(T^{\br_y}y)[B_{r}({\bZr})]\text{ some } 
||\br_x||,||\br_y||<1/r\right\}$,
Define the \emph{tiling metric} $\rho(x,y):=\min(\Delta_p,1/R(x,y))$.
\end{definition}

A well known result of Rudolph  \cite{Rudolph} is that an FLC 
full tiling space $X_{p}$ is a compact metric space (see also \cite{Robinson}, \cite{sadun}).
The following facts will be useful.

\begin{lemma}\label{useful}
Let $X_p$ be an FLC tiling space, and let $x,y\in X_p$.
\begin{enumerate}
\item\label{useful3}
If $(T^\br x)[\{\bt\}]=(T^{\bs}x)[\{\bt\}]$ with $||\br-\bs||<\Delta_p$ then $\br=\bs$.
\item\label{useful1}
If $\rho(x,y)<\delta<\min(1/3,\Delta_p)$ and $r=1/\delta$,
then there exists a $\bt\in\R^d$ with $||\bt||<\delta$ so that 
$x[B_{(8/9)r}({\bZr})]=(T^{\bt} y)[B_{(8/9)r}({\bZr})]$.   
\item \label{useful2} 
If $\rho(T^\br x, T^\bs x)<\Delta_p$  or $||\br-\bs||<\Delta_p$
then $\rho(T^\br x, T^\bs x)=||\br-\bs||$. 

\end{enumerate}
\end{lemma}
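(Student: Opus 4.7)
The three parts of Lemma~\ref{useful} say that the tiling metric $\rho$ behaves like Euclidean distance on small scales and that the constant $\Delta_p$ controls the threshold beyond which geometric considerations take over. My plan is to prove them in the order stated, since (2) and (3) both call on (1).

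For (1), I would unwind the hypothesis that two translates of the same tiling agree on a neighborhood of $\bt$. Assuming first that $\bt$ is interior to the common tile $E$ in both $T^\br x$ and $T^\bs x$ (the boundary case reduces to this by applying the argument to each tile touching $\bt$), one gets $D_1 := E + \br\in x$ and $D_2 := E + \bs\in x$, so $D_2 = T^{\br-\bs}D_1$. If $\br\ne\bs$, then $D_1$ and $D_2$ are two distinct tiles of the same tiling $x$ related by a translation of norm $<\Delta_p\le\delta(D_1)$. By the definition of $\delta(D_1)$ as the infimum of shift-norms for which $T^\bv D_1\cap D_1=\emptyset$, the shifted copy $D_2$ must meet $D_1$. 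The main subtlety, and what I expect to be the hardest step, is arguing that this forced intersection actually contains an open set, not merely boundary points; this uses that tiles are homeomorphs of closed disks with measure-zero boundary, so small nonzero translates of a tile overlap in a nonempty open region. That contradicts the fact that distinct tiles of $x$ intersect in measure zero, giving $\br=\bs$.

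For (2), I would just turn the crank on the definition of $\rho$. From $\rho(x,y)<\delta<\Delta_p$ and $\rho = \min(\Delta_p, 1/R(x,y))$ one gets $R(x,y)>1/\delta = r$, so by the sup there exist $r' > r$ and $\br_x,\br_y$ with $\|\br_x\|,\|\br_y\|<1/r'<\delta$ satisfying $(T^{\br_x}x)[B_{r'}({\bZr})]=(T^{\br_y}y)[B_{r'}({\bZr})]$. Translating this equality of patches back in $\R^2$ yields $x[B_{r'}(\br_x)] = y[B_{r'}(\br_y)] + (\br_x-\br_y)$. Setting $\bt := \br_y - \br_x$ one checks the ball inclusion $B_{(8/9)r}({\bZr}) \subseteq B_{r'}(\br_x)$, which reduces to $r' - \|\br_x\| \ge (8/9)r$; using $r' > r$ and $\|\br_x\|<1/r$, this requires $r - 1/r \ge (8/9)r$, i.e.\ $r\ge 3$, which holds because $\delta<1/3$. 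The same inclusion on the $y$ side gives the desired identity $x[B_{(8/9)r}({\bZr})] = (T^\bt y)[B_{(8/9)r}({\bZr})]$, with $\bt$ controlled by the norms of $\br_x,\br_y$.

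For (3), set $\bv:=\bs-\br$, $x':=T^\br x$, $y':=T^\bv x'$. The upper bound $\rho(x',y')\le\|\bv\|$ comes from the ``trivial'' shift choice in the definition of $R(x',y')$: pairs $(\br_x,\br_y)$ with $\br_x-\br_y=\bv$ make $T^{\br_x}x'$ and $T^{\br_y}y'$ literally equal, so the patches coincide for every $r$ allowed by $\max(\|\br_x\|,\|\br_y\|)<1/r$. For the matching lower bound, I would argue that no $(\br_x,\br_y)$ with $\br_x-\br_y\ne\bv$ can achieve patch agreement on an arbitrarily large ball: picking any $\bt$ in the ball interior to a tile, the patches at $\bt$ agree, and applying Part~(1) to the two shifts $\br_x$ and $\br_y+\bv$ of $x'$ forces $\br_x = \br_y+\bv$, i.e.\ $\br_x-\br_y = \bv$. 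This pins down the optimal shift pair and therefore the value of $R(x',y')$, yielding $\rho(x',y')=\|\bv\|$. In the first hypothesis ($\rho(T^\br x,T^\bs x)<\Delta_p$), Part~(2) already provides a small vector $\bt$ realizing the tilings as translates on a large ball, which by Part~(1) must coincide with $\bv$; this reduces the first case to the second.
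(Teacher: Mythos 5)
Your proposal follows the paper's proof in structure on all three parts: for (1), exhibit two tiles of $x$ that are translates of one another by $\br-\bs$, use the definition of $\delta(D)$ to force them to intersect, and conclude they coincide; for (2), unwind the definition of $\rho$ to get shifts and agreement on a ball of radius larger than $r$, recenter, and verify the ball inclusion using $\delta<1/3$; for (3), prove the upper bound by exhibiting shifts and the lower bound by feeding (2) into (1). In part (1) you are actually more careful than the paper, which asserts ``$T^{\br}D\cap D\ne\emptyset$, so $T^{\br}D=D$'' with no comment; you correctly flag that distinct tiles of a tiling may share boundary, so one must show the forced overlap has nonempty interior. Note, however, that your justification (``small nonzero translates overlap in an open region'') is needed for \emph{all} $\|\br-\bs\|<\Delta_p$, not just for small translates, and at that scale it is really a fact about convexity rather than about disk-homeomorphic tiles in general: for non-convex tiles it can fail (two interlocking, jigsaw-style tiles in a tiling can be translates of one another by a vector of norm strictly less than $\delta(D)$ while having disjoint interiors). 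So both your proof and the paper's implicitly use that the tiles are convex, which covers every case the paper needs.

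There are two places where your constants do not produce what the statement claims, and it is worth knowing that the paper's own proof has the same defects. In (2) your shifts satisfy only $\|\br_x\|,\|\br_y\|<\delta$, so you get $\|\bt\|<2\delta$, not the asserted $\|\bt\|<\delta$; the paper reaches $\|\bt\|<\delta$ only by claiming, without justification, that the shifts can be taken of norm $<\delta/2$. In (3) your own argument, finished correctly, does not give $\rho(x',y')=\|\bv\|$: once every admissible shift pair at large radius must satisfy $\br_x-\br_y=\bv$, the pair minimizing $\max(\|\br_x\|,\|\br_y\|)$ is the symmetric one $\br_x=\bv/2$, $\br_y=-\bv/2$, so $R(x',y')=2/\|\bv\|$ and your reasoning yields $\rho(x',y')=\|\bv\|/2$, not $\|\bv\|$. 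This last step is therefore a non sequitur, but it is not a slip you could have repaired: with $\rho$ as in Definition~\ref{tmetric}, the symmetric shifts always give $\rho(x,T^{\bv}x)\le\|\bv\|/2$, so the equality in part (3) of the Lemma (and hence the bound $\|\bt\|<\delta$ in part (2), from which the paper derives it) is off by exactly this factor of two; moreover (3) fails outright for periodic $x\in X_p$ (a period $\bp$ gives $\rho(T^{\bp}x,x)=0\ne\|\bp\|$ while the hypothesis $\rho<\Delta_p$ holds), a case both you and the paper ignore when applying (1) inside (3) without checking its $\Delta_p$-hypothesis. In short: your route is the paper's route; the remaining defects in your write-up are defects of the Lemma itself, whose true content --- that on small scales $\rho$ along an orbit is Euclidean distance up to a fixed normalization --- is all that the rest of the paper uses.
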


\begin{proof}\
\begin{enumerate}
\item  
Suppose $x[\{\bt\}]=(T^{\br}x)[\{\bt\}]$ for $||\br||<\Delta_p$. Then for some $D\in x[\{\bt\}]$ we also 
have $T^\br D\in x[\{\bt\}]$, with $T^\br D\cap D \ne\emptyset$. So $T^\br D=D$ and $\br={\bZr}$.
\item If $0<\rho(x,y)=\delta=:1/r<\Delta_p$ then $(T^{\br}x)[B_{r}({\bZr})]=(T^{\bs}y)[B_{r}({\bZr})]$
for some $||\br ||,||\bs ||<\delta/2$. So $x[B_{r}({\br})]=(T^{\bs-\br}y)[B_{r}({\br})]$ and 
$\bt:=\bs-\br$ satisfies $||\bt||<\delta$. It follows that $x[B_{(9/10)r}({\bZr})]=(T^{\bt}y)[B_{(9/10)r}({\bZr})]$,
because $1/r=\delta<1/3$ implies $B_{(9/10)r}({\bZr})\subseteq B_{r}({\br})$.
\item It suffices to prove $\rho(x,T^\br x)=||\br||$. Clearly $\rho(x,T^\br x)\le ||\br||$. 
Now suppose $\rho(x,T^\br x)=\delta=1/r<||\br||$. By (2) there is $||\bs||<\delta$ with
$(T^\bs x)[B_{(8/9)r}({\bZr})]=(T^{\br}x)[B_{(8/9)r}({\bZr})]$,
so by (1), $\br=\bs$, contradicting $\delta=||\bs||<||\br||$.  
The converse is clear.
\end{enumerate}
\vskip -.2in
\end{proof}

\begin{definition}
For a full FLC tiling space $X_p$,
we call any closed $T$-invariant subset $X\subseteq X_{p}$ 
an \emph{FLC tiling space} (i.e., subspace)
and call $(X,\R^2,T)$ an FLC \emph{tiling dynamical system}. 
\end{definition}

Let $X$ and $X'$ be tiling spaces. For $r>0$, consider the sets of patches $X_{\{r\}}:=\{x[B_r(\bZr)]:x\in X\}$ and $X'_{\{0\}}:=\{x'[\{\bZr\}]:x'\in X'\}$.

\begin{definition}\label{local}
A map $C:X\to X'$ is called \emph{local} with \emph{local radius} $r$
(see \cite{Robinson}, \cite{BG}) if there is a map $c: X_{\{r\}}\to X'_{\{0\}}$ so that  
$C(x)[\{\bZr\}]=c(x[B_r(\bZr)])$ for all $x\in X$. 
\end{definition}

Note that a local map is uniformly continuous, and satisfies  
$C(T^\bt x)=T^\bt C(x)$ for  $x\in X$ and $\bt\in\R^2$. A local map is a factor map if it is onto.  
If a local homeomorphism $C:X\to X'$ 
has a local inverse, we say $X$ and $X'$ are  \emph{mutually locally derivable} (MLD),
and say $C$ is a MLD topological conjugacy. 
Local maps between FLC 
tiling spaces
are analogous to sliding block codes between $\Z^d$ subshifts. In the subshift case, 
the Curtis/Lyndon/Hedlund Theorem says every topological conjugacy is a sliding block code. 
However, for FLC tiling dynamical systems there are topological conjugacies that 
are not MLD (see \cite{petersen}, \cite{holton}, \cite{BG}, and Theorem~\ref{pwt}).

\begin{example}\label{rhombic}
For $j=0,1,\dots,4$, let ${\bv}_j:=({\rm Re}(e^{2\pi i j/5}),{\rm Im}(e^{2\pi i j/5}))$, 
the $j$th $5$th root of unity, and let  ${\bv}'_j:=(2/5){\bv}_j$.
We denote the prototile set by $p_5:=\{{\bv}'_i\wedge {\bv}'_j:0\le i<j\le 4 \}$, where 
${\bv}'_i\wedge {\bv}'_j:=\{t_0{\bv}'_i+t_1{\bv}'_j : t_0, t_1 \in[0,1]\}$.
Then $p_5$ consists  of $10$ rhombic tiles with edge length $2/5$, five with acute angle $2\pi/5$ and  
five with acute angle $2\pi/10$.
Figure~\ref{unmarked} (a) shows  ${\bv}'_0\wedge {\bv}'_4$ and ${\bv}'_0\wedge {\bv}'_3$.
All other tiles in $p_5$ are rotations of one of these.
We take local rule 
  $p^{(2)}_5$ to be the requirement that all tiles meet edge to edge. 
The corresponding 
FLC full tiling space satisfies $X_{p_5}\ne\emptyset$  since it contains 
periodic tilings (see Figure~\ref{unmarked}, (b)). 
\end{example}

\begin{remark}\label{lf}
The existence of periodic tilings $x\in X_{p_5}$ means 
that the FLC tiling dynamical system $(X_{p_5},\R^2,T)$ is not minimal, uniquely ergodic, or even free. However, 
every FLC tiling dynamical system is locally free by (\ref{useful3}) of Lemma~\ref{useful}.
\end{remark}

\begin{figure}
\includegraphics[width=9cm]{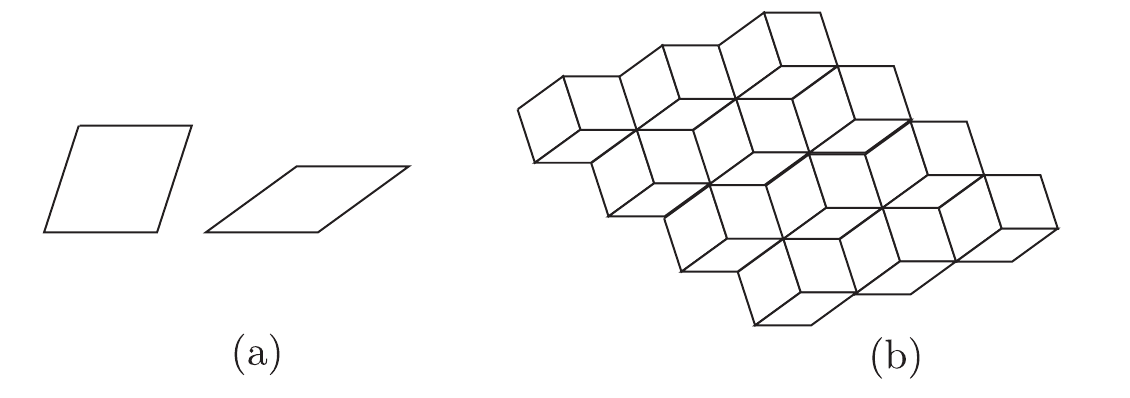}
\caption{(a) Two unmarked rhombic tiles, \emph{thick}: $\bv'_0\wedge\bv'_1$ 
and \emph{thin} $\bv'_0\wedge\bv'_3$. (b) A swatch of a periodic tiling 
$x\in X_{p_5}$, showing $X_{p_5}\ne\emptyset$, but also that $p_5$ is not an aperiodic prototile set.  
\label{unmarked}}
\end{figure}

\subsection{Expansiveness and directional expansiveness for tiling dynamical systems}\label{expan}

\begin{theorem}[Frank-Sadun, \cite{FS}]\label{wis}
For $(X,\R^2,T)$ an FLC tiling dynamical system, weak expansiveness is equivalent to strong expansiveness (we just say ``expansive'' in the FLC case).
\end{theorem}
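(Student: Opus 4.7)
The plan is to prove the two implications separately. The direction \emph{strong $\Rightarrow$ weak} is immediate: setting $h=\mathrm{id}_{\R^2}$ in Definition~\ref{DFrankSadun}(ii) reproduces Definition~\ref{DFrankSadun}(i) with the same constants. The content is the converse in the FLC setting, and my strategy is to use FLC rigidity to force the witnessing homeomorphism $h$ to differ from the identity by only a small constant translation, after which weak expansiveness closes the argument.

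Assume $T$ is weakly expansive and fix $\epsilon>0$. Let $\delta_0$ be the weak-expansive constant for $\epsilon/2$, and choose $\delta_1<\min\{\tfrac{8}{9}\delta_0,\tfrac{1}{4}\Delta_p,\tfrac{1}{2}\epsilon,\tfrac{1}{3}\}$. Suppose $\rho(T^\bt x,T^{h(\bt)}y)<\delta_1$ for all $\bt\in\R^2$, where $h:\R^2\to\R^2$ is a homeomorphism with $h(\bZr)=\bZr$. Writing $R:=8/(9\delta_1)$, Lemma~\ref{useful}(2) applied to the pair $(T^\bt x,T^{h(\bt)}y)$ produces, for each $\bt$, a vector $\bs(\bt)$ with $||\bs(\bt)||<\delta_1$ satisfying
\[
(T^\bt x)[B_R(\bZr)] \;=\; (T^{g(\bt)}y)[B_R(\bZr)], \qquad g(\bt):=h(\bt)+\bs(\bt).
\]
Set $f(\bt):=g(\bt)-\bt$, so that the displayed equality is equivalent to $y[B_R(g(\bt))]=x[B_R(\bt)]+f(\bt)$.

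The crux is to show $f\equiv \bc$ for a constant $\bc\in\R^2$. Fix $\bt$. Continuity of $h$ yields a neighborhood $U_\bt$ of $\bt$ on which $||h(\bt')-h(\bt)||<\Delta_p/2$, and hence $||f(\bt')-f(\bt)||<\Delta_p/2+2\delta_1<\Delta_p$ there. After shrinking $U_\bt$ so that $B_R(\bt)\cap B_R(\bt')$ contains a common tile $D$ of $x$ (automatic since $R\gg 1$), both $D+f(\bt)$ and $D+f(\bt')$ appear as tiles of $y$. Picking $\bt_0$ in the interior of $D$, a direct computation using the two patch equalities shows
\[
y[\{\bt_0+f(\bt)\}]\;=\;(T^{f(\bt')-f(\bt)}y)[\{\bt_0+f(\bt)\}]\;=\;\{D+f(\bt)\},
\]
so Lemma~\ref{useful}(1), applied to $y$ with $\br=\bZr$ and $\bs=f(\bt')-f(\bt)$ of norm $<\Delta_p$, forces $f(\bt')=f(\bt)$. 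Each level set of $f$ is therefore open, and by connectedness of $\R^2$ the function $f$ is constant, say $f\equiv\bc$ with $||\bc||=||\bs(\bZr)||<\delta_1$.

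Finally, set $y':=T^\bc y$. Since $g(\bt)=\bt+\bc$, we obtain $\rho(T^\bt x,T^\bt y')\leq 1/R=(9/8)\delta_1<\delta_0$ for all $\bt$. Weak expansiveness then supplies $\bs_0$ with $||\bs_0||<\epsilon/2$ and $y'=T^{\bs_0}x$, giving $y=T^{\bs_0-\bc}x$ with $||\bs_0-\bc||<\epsilon$, as required for strong expansiveness. The main obstacle, and the only place where FLC enters, is the rigidity step showing $f$ is locally constant; it hinges critically on the strictly positive constant $\Delta_p$ from Definition~\ref{tmetric} and on the continuity of the reparametrization $h$, and it corresponds precisely to the translation-rigidity of FLC tilings exploited by Frank and Sadun.
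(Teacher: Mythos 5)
Your proof is correct, and it handles both implications properly: strong $\Rightarrow$ weak by taking $h=\mathrm{id}$, and the converse by FLC rigidity. Note that the paper itself does not reprove Theorem~\ref{wis} (it is attributed to \cite{FS}); the closest in-paper argument is the directional analogue, Lemma~\ref{added} together with Proposition~\ref{FLCdirectioncor}, which the authors say closely follows the proof in \cite{FS}. That argument has a different shape from yours: it establishes only the approximate statement that $||h(\bt)-\bt||$ stays below $\delta$, by an induction that propagates the bound outward from the origin in steps of size $\delta/3$, and then closes with the triangle inequality $\rho(T^\bt x,T^\bt y)\le\rho(T^\bt x,T^{h(\bt)}y)+||h(\bt)-\bt||$ followed by weak expansiveness applied to the original pair $(x,y)$. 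You instead prove an exact statement: the patch-matching displacement $f(\bt)=h(\bt)+\bs(\bt)-\bt$ is locally constant, hence by connectedness of $\R^2$ globally equal to a single vector $\bc$ with $||\bc||<\delta_1$, and you then apply weak expansiveness to the translated tiling $T^{\bc}y$ rather than to $y$ itself. Both arguments turn on the same rigidity input, parts (1) and (2) of Lemma~\ref{useful}; your clopen/connectedness packaging is arguably cleaner for the full $\R^2$ statement and yields the stronger intermediate conclusion that $y$ matches $x$ by one exact translation on every large ball, while the paper's inductive bound is the form it needs verbatim in the directional setting. Two small repairs to your write-up: in the bound on $||f(\bt')-f(\bt)||$ you dropped the term $||\bt'-\bt||$ (the correct estimate is $||h(\bt')-h(\bt)||+||\bs(\bt')||+||\bs(\bt)||+||\bt'-\bt||$), which is harmless because $2\delta_1<\Delta_p/2$ leaves room to shrink $U_\bt$ further; and the existence of a common tile $D$ in both patches is not ``automatic since $R\gg1$'' (the prototiles could well have diameter larger than $R=8/(9\delta_1)$, which is only bounded below by $8/3$), but follows by choosing a point $p$ interior to some tile $D$ with $||p-\bt||<R/2$, so that $D\in x[B_R(\bt)]\cap x[B_R(\bt')]$ whenever $||\bt'-\bt||<R/2$.
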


\begin{proposition}\label{strongly}
Any  FLC tiling dynamical system $(X,\R^2, T)$ is expansive.
\end{proposition}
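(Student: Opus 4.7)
The plan is to derive the proposition from Theorem~\ref{wis}, so it is enough to verify weak expansiveness. The intuition is that FLC forces $x$ and $y$ to agree, up to a small translation, on a large ball around any chosen point, and the rigidity in Lemma~\ref{useful} pins this translation down uniquely, so it must be independent of the chosen point.

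Given $\epsilon>0$, I would choose $\delta>0$ with $\delta<\min(\epsilon,\Delta_p/3,1/3)$ and set $r_0:=(8/9)/\delta$, which is much larger than $\Delta_p$. Suppose $x,y\in X$ satisfy $\rho(T^\bt x,T^\bt y)<\delta$ for every $\bt\in\R^2$. Applying Lemma~\ref{useful}(\ref{useful1}) to the pair $(T^\bt x,T^\bt y)$ produces, for each $\bt\in\R^2$, a vector $\bs(\bt)\in\R^2$ with $\|\bs(\bt)\|<\delta$ such that, after translating by $-\bt$, one has $x[B_{r_0}(\bt)]=(T^{\bs(\bt)}y)[B_{r_0}(\bt)]$.

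The main step, which I expect to be essentially the only obstacle, is to show the assignment $\bt\mapsto \bs(\bt)$ is constant on $\R^2$. First I would verify uniqueness: if $\bs,\bs'$ were two candidates, then for any $\bu\in B_{r_0}(\bt)$ one has $(T^\bs y)[\{\bu\}]=(T^{\bs'}y)[\{\bu\}]$, and since $\|\bs-\bs'\|<2\delta<\Delta_p$, Lemma~\ref{useful}(\ref{useful3}) applied to $y$ forces $\bs=\bs'$. For local constancy, given $\bt,\bt'$ with $\|\bt-\bt'\|<2r_0$, the ball $B_r(\bt'')$ with $\bt''=(\bt+\bt')/2$ and $r=r_0-\tfrac{1}{2}\|\bt-\bt'\|>0$ lies inside both $B_{r_0}(\bt)$ and $B_{r_0}(\bt')$, yielding $(T^{\bs(\bt)}y)[B_r(\bt'')]=x[B_r(\bt'')]=(T^{\bs(\bt')}y)[B_r(\bt'')]$; the uniqueness argument then gives $\bs(\bt)=\bs(\bt')$. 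Hence $\bs$ is locally constant, and by connectedness of $\R^2$ it is globally constant, equal to some $\bs_0$ with $\|\bs_0\|<\delta$.

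With $\bs\equiv\bs_0$, the identity $x[B_{r_0}(\bt)]=(T^{\bs_0}y)[B_{r_0}(\bt)]$ holding for every $\bt\in\R^2$ forces $x=T^{\bs_0}y$, i.e., $y=T^{-\bs_0}x$ with $\|{-\bs_0}\|<\delta<\epsilon$, establishing weak expansiveness. Strong expansiveness then follows from Theorem~\ref{wis}.
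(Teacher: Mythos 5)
Your proof is correct and takes essentially the same route as the paper's: both establish weak expansiveness by using Lemma~\ref{useful}(\ref{useful1}) to produce small translations $\bs(\bt)$ under which $x$ and $y$ agree on large balls, then use the rigidity statement Lemma~\ref{useful}(\ref{useful3}) on overlapping balls to force all these translations to coincide, concluding via Theorem~\ref{wis}. The only difference is bookkeeping: the paper covers $\R^2$ by a countable chain of overlapping balls and inducts along the chain, whereas you index by every $\bt\in\R^2$ and argue local constancy plus connectedness of $\R^2$ --- the underlying argument is identical.
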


\begin{proof}
Given $\epsilon>0$, let $\delta<\min\left(\epsilon,1/3,\Delta_p/2\right)$, $r=1/\delta$. Let $B_0=B_{8r/9}({\bZr})$. For a sequence $\bt_n\in\R^2$, $n=1,2,\dots$, 
let $B_n:=B_{8r/9}(-\bt_n)$. Choose the $\bt_n$ so that 
$\cup_{n\ge 1} B_n=\R^2$, and so that $I_n:=B_{n+1}\cap B_n\ne\emptyset$.
Now suppose  $x,y\in X_p$ satisfy $\rho(T^\bt x,T^\bt y)<\delta$ for all $\bt\in\R^2$.
Then for each $n=1,2,\dots$, $\rho(T^{\bt_n} x,T^{\bt_n} y)<\delta$, so that by (\ref{useful1}) of Lemma~\ref{useful},
there exists $||\br_n||<\delta$ 
so that $(T^{\bt_n}x)[B_0]=(T^{\bt_n+\br_n}y)[B_0]$. Equivalently,
$x[B_n]=(T^{\br_n}y)[B_n]$. 
We \emph{claim} $\br_n=\br_1$ for all $n\in\N$.
Thus $x[B_n]=(T^{\br_1}y)[B_n]$ for all $n$, and 
since the $\R^2=\cup_{n\ge 1} B_n$,  $y=T^{\br_1}x$.
By (\ref{useful2}) of Lemma~\ref{useful}, $||\br_1||=\rho(x,y)<\delta<\epsilon$.
To prove the \emph{claim}, we prove $\br_{n+1}=\br_n$ for all $n$. Let $C_n:=B_n\cap B_{n+1}$.
Then $(T^{\br_n}y)[C_n]=x[C_n]=(T^{\br_{n+1}}y)[C_n]$. Since $||\br_{n+1}-\br_n||<2\delta<\Delta_p$, 
(\ref{useful3}) of Lemma~\ref{useful} implies
$\br_{n+1}=\br_n$. 
\end{proof}

\bigskip

We now discuss directional expansiveness for FLC tiling dynamical systems $(X,\R^2, T)$. 
Any $1$-dimensional direction $\cV$ in $\R^2$ is a line. We typically describe it either as the line generated by a 
vector $\bv\ne\bZr$, or by its slope $m\in \R$,  (with $m=\infty$ being a vertical line).

\begin{proposition}\label{FLCdirectioncor}
If $(X,\R^2,T)$ is an FLC tiling dynamical system, then a direction $\cV$ is weakly expansive if and only if it is strongly expansive in the direction $\cV$. 
\end{proposition}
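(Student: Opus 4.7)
The plan is to adapt the proof of Theorem~\ref{wis} to the directional setting. The implication ``strong directionally expansive $\Rightarrow$ weak directionally expansive'' is immediate upon taking $h=\mathrm{id}_\cV$ in Definition~\ref{directional}(ii), so the real work is the converse. The guiding idea is that under the FLC hypothesis, the time-reparameterization $h$ in the strong definition is forced to act on $\cV$ as a single global translation (up to a small FLC-scale error). Once this rigidity is in hand, the hypothesis of the strong definition collapses to that of the weak definition, and weak directional expansiveness finishes the argument.

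Concretely, given $\epsilon>0$, I would first apply weak directional expansiveness at parameter $\epsilon/2$ to obtain a constant $\delta''>0$, then take $\delta:=\min(\epsilon/2,\,(8/9)\delta'',\,\Delta_p/2,\,1/3)$ and set $r:=1/\delta$. Suppose $\rho(T^\bt x,T^{h(\bt)}y)<\delta$ for every $\bt\in\cV$, with $h:\cV\to\cV$ fixing $\bZr$. Part (\ref{useful1}) of Lemma~\ref{useful} then produces, for each $\bt\in\cV$, a vector $\bu(\bt)\in\R^2$ with $\|\bu(\bt)\|<\delta$ such that
$$x[B_{8r/9}(\bt)]=(T^{\bs(\bt)}y)[B_{8r/9}(\bt)],\qquad \bs(\bt):=h(\bt)-\bt+\bu(\bt).$$

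The heart of the argument, and what I expect to be the main obstacle, is to show that $\bs:\cV\to\R^2$ is constant. For $\bt_1,\bt_2\in\cV$ close enough that the balls $B_{8r/9}(\bt_i)$ meet, both $T^{\bs(\bt_1)}y$ and $T^{\bs(\bt_2)}y$ agree with $x$ pointwise on the overlap; since $\|\bs(\bt_1)-\bs(\bt_2)\|<2\delta<\Delta_p$, part (\ref{useful3}) of Lemma~\ref{useful} forces $\bs(\bt_1)=\bs(\bt_2)$. Connectedness of the $1$-dimensional subspace $\cV$ then propagates local constancy to global constancy, so $\bs\equiv\bs_0:=\bu(\bZr)$ and $\|\bs_0\|<\delta$. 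Setting $y':=T^{\bs_0}y$, the resulting agreement $x[B_{8r/9}(\bt)]=y'[B_{8r/9}(\bt)]$ for all $\bt\in\cV$ yields $\rho(T^\bt x,T^\bt y')\le 9\delta/8<\delta''$, so weak expansiveness delivers $y'=T^{\bs_1}x$ with $\|\bs_1\|<\epsilon/2$, whence $y=T^{\bs_1-\bs_0}x$ with $\|\bs_1-\bs_0\|<\epsilon$. The delicate point is the constancy argument, which simultaneously requires the FLC-scale bound $2\delta<\Delta_p$, a radius $8r/9$ large enough for consecutive balls along $\cV$ to overlap, and the connectedness of $\cV$ to chain local equalities into a single global translation; the remaining estimates are routine bookkeeping.
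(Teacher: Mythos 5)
Your plan---absorbing the reparameterization into a shift function $\bs(\bt)=h(\bt)-\bt+\bu(\bt)$ and proving $\bs$ constant---is reasonable, but the justification of its central step is circular, and you flagged exactly the step where the problem sits. You claim that when $B_{8r/9}(\bt_1)$ and $B_{8r/9}(\bt_2)$ meet, $\|\bs(\bt_1)-\bs(\bt_2)\|<2\delta<\Delta_p$. Only the correction terms $\bu(\bt_i)$ are bounded by $\delta$; the other contribution, $(h(\bt_1)-\bt_1)-(h(\bt_2)-\bt_2)$, is controlled by nothing in your argument. The points $\bt_1,\bt_2$ may be as far apart as $16r/9=16/(9\delta)$, and a homeomorphism of $\cV$ fixing the origin can drift from the identity by far more than $\Delta_p$ over that distance. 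Without an a priori bound $\|\bs(\bt_1)-\bs(\bt_2)\|<\Delta_p$, part (\ref{useful3}) of Lemma~\ref{useful} does not apply, and its conclusion can genuinely fail: in any tiling with repetitions (a repetitive tiling such as a Penrose tiling, or a periodic tiling as in Example~\ref{rhombic}), two translates $T^{\bs_1}y$ and $T^{\bs_2}y$ with $\bs_1\ne\bs_2$ far apart can agree with $x$ on a common patch. In short, the smallness of $\bs(\bt_1)-\bs(\bt_2)$ is essentially the statement you are trying to prove, not something you may assume.

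What is missing is precisely the content of the paper's Lemma~\ref{added}, whose proof uses two things your proposal never invokes: the continuity of $h$ and the normalization $h(\bZr)=\bZr$. Working outward from the origin in steps of length $\delta/3$, and using an intermediate-value argument to locate a first point where $\|h(\bt)-\bt\|$ would reach the critical size, the paper bootstraps the bound $\|h(\bt)-\bt\|<\delta$ to all of $\cV$. Once that is known, no constancy argument is needed at all: by Lemma~\ref{useful}(\ref{useful2}), $\rho(T^{h(\bt)}y,T^{\bt}y)=\|h(\bt)-\bt\|$, so the triangle inequality converts the strong hypothesis directly into the weak one, $\rho(T^\bt x,T^\bt y)<\delta'$, and weak directional expansiveness concludes. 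Your route can be repaired in the same spirit: use continuity of $h$ so that ``close enough'' in your local-constancy step means $\|h(\bt_2)-h(\bt_1)\|+\|\bt_2-\bt_1\|<\Delta_p-2\delta$ (a condition depending on $h$, not merely on the balls meeting); then $\bs$ is locally constant, hence constant on the connected set $\cV$ and equal to $\bu(\bZr)$, and your final paragraph goes through. Two cosmetic points there: with $\delta=\Delta_p/2$ you get $2\delta=\Delta_p$ rather than $2\delta<\Delta_p$, and with $\delta=(8/9)\delta''$ you get $9\delta/8=\delta''$ rather than a strict inequality, so both minima should be taken with strict room to spare.
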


Because of Proposition~\ref{FLCdirectioncor},
we will from now on 
simply refer to \emph{expansive} and \emph{non-expansive} directions
when discussing 
FLC tiling dynamical systems.
Our proof of Proposition~\ref{FLCdirectioncor}, closely follows the proof of Theorem~\ref{wis} in \cite{FS}. We  start with a lemma.

\begin{lemma}\label{added}
Let $(X,\R^2,T)$, $X\subseteq X_p$, be an FLC tiling dynamical system, and let 
$0<2\delta<\min(1/3,\Delta_p)$.
For a $1$-dimensional direction $\cV\subseteq\R^2$, and  $x,y\in X$, let
$h:\cV\to\cV$ be a homeomorphism such that $h(\bZr)=\bZr$ and  
$\rho(T^\bt x,T^{h(\bt)}y)<\delta/3$ for all $\bt\in\cV$. 
Then $||h(\bt)-\bt||<\delta$ for all $\bt\in\R$.
\end{lemma}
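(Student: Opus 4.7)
The plan is to combine the patching technique of Proposition~\ref{strongly} with the uniform continuity of the homeomorphism $h$ on compact sub-segments of $\cV$.

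Set $r := 3/\delta$. For each $\bt \in \cV$, the hypothesis $\rho(T^\bt x, T^{h(\bt)} y) < \delta/3 < \min(1/3,\Delta_p)$ together with (\ref{useful1}) of Lemma~\ref{useful} (applied to $T^\bt x$ and $T^{h(\bt)} y$ with parameter $\delta/3$) produces $\bs(\bt) \in \R^2$ with $||\bs(\bt)|| < \delta/3$ satisfying $(T^\bt x)[B_{8r/9}(\bZr)] = (T^{h(\bt)+\bs(\bt)} y)[B_{8r/9}(\bZr)]$. Setting $\bu(\bt) := h(\bt) - \bt + \bs(\bt)$, translation by $-\bt$ rewrites this as $x[B_{8r/9}(\bt)] = (T^{\bu(\bt)} y)[B_{8r/9}(\bt)]$, and $\bu(\bZr) = \bs(\bZr)$ since $h(\bZr) = \bZr$. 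The key claim is that $\bu$ is constant on $\cV$, i.e., $\bu(\bt) = \bs(\bZr)$ for every $\bt \in \cV$. Granting this, $h(\bt) - \bt = \bs(\bZr) - \bs(\bt)$ yields $||h(\bt) - \bt|| \le ||\bs(\bZr)|| + ||\bs(\bt)|| < 2\delta/3 < \delta$, as desired.

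To prove the claim, fix $\bt_* \in \cV$ and choose $\eta > 0$ small enough that $\eta < 16r/9$ and $2\eta + 2\delta/3 < \Delta_p$ (the latter being possible since $2\delta < \Delta_p$ implies $2\delta/3 < \Delta_p/3$). By uniform continuity of $h$ on the compact segment from $\bZr$ to $\bt_*$ in $\cV$, one selects a subdivision $\bZr = \bt_0, \bt_1, \ldots, \bt_N = \bt_*$ with $||\bt_{n+1} - \bt_n|| < \eta$ and $||h(\bt_{n+1}) - h(\bt_n)|| < \eta$ for every $n$. The first bound gives nonempty overlap of the consecutive balls $B_{8r/9}(\bt_n)$ and $B_{8r/9}(\bt_{n+1})$, on which both $T^{\bu(\bt_n)} y$ and $T^{\bu(\bt_{n+1})} y$ coincide with $x$ and hence with each other; the triangle inequality gives
\[
||\bu(\bt_{n+1}) - \bu(\bt_n)|| \le ||h(\bt_{n+1}) - h(\bt_n)|| + ||\bt_{n+1} - \bt_n|| + ||\bs(\bt_{n+1})|| + ||\bs(\bt_n)|| < 2\eta + 2\delta/3 < \Delta_p.
\]
Applying (\ref{useful3}) of Lemma~\ref{useful} to the tiling $y$ at any point of the overlap forces $\bu(\bt_n) = \bu(\bt_{n+1})$, and induction along the chain completes the proof of the claim.

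The main obstacle is that $h$ carries no quantitative modulus of continuity, so a subdivision uniform in arc length along $\cV$ would not by itself control $||h(\bt_{n+1}) - h(\bt_n)||$. This is circumvented by invoking uniform continuity of $h$ on the compact segment from $\bZr$ to $\bt_*$ to refine the partition adaptively, and is also the reason the bound on $||h(\bt) - \bt||$ is pointwise in $\bt \in \cV$ rather than uniform across all of $\cV$.
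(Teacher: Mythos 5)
Your proof is correct and reaches the same bound ($2\delta/3<\delta$) as the paper, but by a genuinely different route. The paper argues by induction on $||\bt||$ in increments of $\delta/3$: it assumes the estimate $||h(\bt)-\bt||<2\delta/3$ inside the ball of radius $k\delta/3$, supposes a violation one step further out, uses the intermediate value theorem to locate a first point $\bt_0$ where $||h(\bt_0)-\bt_0||$ exactly attains the threshold, and then applies parts (\ref{useful3}) and (\ref{useful1}) of Lemma~\ref{useful} at $\bt_0$ to contradict that equality; the IVT step is what supplies the a priori bound on $||h(\bt_0)-\bt_0||$ needed to keep the relevant translation difference below $\Delta_p$ when invoking part (\ref{useful3}). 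You obtain that control differently: uniform continuity of $h$ on the compact segment from $\bZr$ to $\bt_*$ lets you make the increments of $h$ along the chain smaller than $\eta$, so that $||\bu(\bt_{n+1})-\bu(\bt_n)||<2\eta+2\delta/3<\Delta_p$ holds with no a priori information about $||h(\bt)-\bt||$ at all, and part (\ref{useful3}) of Lemma~\ref{useful} then forces the matching vector $\bu$ to be constant along $\cV$. Both arguments pivot on the same two facts --- parts (\ref{useful3}) and (\ref{useful1}) of Lemma~\ref{useful} --- but yours buys a stronger structural conclusion: a single translation $\bs(\bZr)$ matches $y$ to $x$ on the entire tube $\cV^{8r/9}$ (with $r=3/\delta$), which is essentially the configuration the subsequent proof of Proposition~\ref{FLCdirectioncor} is driving at, whereas the paper's induction only propagates the pointwise norm estimate outward. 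One small remark: your closing comment that the bound is ``pointwise in $\bt$ rather than uniform'' undersells your own argument --- since $\bu(\bt)=\bs(\bZr)$ for every $\bt\in\cV$, the estimate $||h(\bt)-\bt||\le||\bs(\bZr)||+||\bs(\bt)||<2\delta/3$ is uniform over $\cV$; only the subdivision is chosen per endpoint $\bt_*$, which costs nothing.
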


\begin{proof}
We prove this by induction.
Let $r=\delta/3$.
Suppose there is a $\bt\in\cV$ with $||\bt||<r$ such that $||h(\bt)-\bt||\ge\delta$. By continuity we can find 
$\bt_0\in\cV$, $||\bt_0||< r$, so that $||h(\bt_0)-\bt_0||= \delta$. Now, since 
$\rho(x,y)=\rho(T^\bt x,T^{h(\bt)}y)|_{\bt=\bZr}<\delta/3$, 
part (2) of Lemma~\ref{useful} says there is $\br\in\R^2$ with $||\br||<\delta/3$ so that 
$x[B_{2r}({\bZr})]=(T^\br y)[B_{2r}({\bZr})]$. 
Since $\bt_0\in B_{2r}({\bZr})$, we have $x[\{\bt_0\}]=(T^\br y)[\{\bt_0\}]$. Again, using  part (2) of Lemma~\ref{useful}, 
$\rho(T^{\bt_0} x,T^{h(\bt_0)}y)<\delta/3$ implies $(T^{\bt_0} x)[B_{2r}({\bZr})]=(T^{h(\bt_0)+\bs}y)[B_{2r}({\bZr})]$ 
for some $||\bs||<\delta/3$. Thus
$x[B_{2r}({\bt_0})]=(T^{h(\bt_0)-\bt_0+\bs}y)[B_{2r}(\bt_0)]$, which implies
$x[\{\bt_0\}]=(T^{h(\bt_0)-\bt_0+\bs}y)[\{\bt_0\}]$.
It follows that $(T^\br y)[\{\bt_0\}]=(T^{h(\bt_0)-\bt_0+\bs}y)[\{\bt_0\}]$.
Since $||h(\bt_0)-\bt_0+\bs-\br||\le||h(\bt_0)-\bt_0||+||\br||+||\bs||<\delta+\delta/3+\delta/3=2\delta<\Delta_p$,
part (1) of Lemma~\ref{useful} implies $h(\bt_0)-\bt_0=\bs-\br$ which implies
$\delta=||h(\bt_0)-\bt_0||=||\bs-\br||<\delta/3+\delta/3<\delta$, a contradiction. This is the base of the induction.

For the induction step, we assume that,
for some $k\ge 1$, 
 no $\bt\in\cV$ with $||\bt||<rk$ has $||h(\bt)-\bt||\ge 2\delta/3$, 
and show the same for   
$\bt\in\cV$ with $||\bt||<(k+1)r$. 
Suppose, to the contrary, 
that some such $\bt=\bt_0$ 
satisfies $||h(\bt_0)-\bt_0||=2\delta/3$. 
Let $\bs_0=kt/(k+1)$ so that $\bs_0\in\cV$, $||\bs_0||<kr$ and 
$||\bt_0-\bs_0||<r$. By the assumptions,  
$\rho(T^{\bs_0}x,T^{h(\bs_0)}y)<\delta/3$, and by  induction,  
$||h(\bs_0)-\bs_0||<2\delta/3$,
so part (2) of Lemma~\ref{useful} implies
\[
\rho(T^{\bs_0}x, T^{\bs_0}y)\le \rho(T^{\bs_0}x, T^{h(\bs_0)}y)+\rho(T^{h(\bs_0)}y, T^{\bs_0}y)
<\delta/3+||h(\bs_0)-\bs_0||<\delta.
\]
Since $||\bt_0-\bs_0||<r$, the argument above shows
$(T^{\bs_0}x)[\bt_0-\bs_0]=(T^{\bs_0+\br}y)[\bt_0-\bs_0]$ for some $||\br||<\delta/3$. Equivalently,
$x[\{\bt_0\}]=(T^\br y)[\{\bt_0\}]$. But also, the assumption, $\rho(T^{\bt_0}x,T^{h(\bt_0)}y)<\delta/3$  implies
$x[\{\bt_0\}]=(T^{h(\bt_0)-\bt_0+\br'}y)[\{\bt_0\}]$ for some $||\br'||<\delta/3$. So 
$(T^\br y)[\{\bt_0\}]= (T^{h(\bt_0)-\bt_0+\br'}y)[\{\bt_0\}]$. As above, part (1) of Lemma~\ref{useful} implies 
$h(\bt_0)-\bt_0=\br'-\br$, which yields the contradiction  $2\delta/3=|| h(\bt_0)-\bt_0||=||\br'-\br||<2\delta/3$.
\end{proof}

\begin{proof}[Proof of Proposition~\ref{FLCdirectioncor}] 
Assume that $T$ is weakly expansive in the direction $\cV$. Fix $\epsilon>0$ and find 
$\delta'>0$ for weak expansiveness ((1) of Definition~\ref{directional}), satisfying $\delta'<\Delta_p$. 
Let $x,y\in X$ 
and  let $h=h_{x,y}$ be as in part (2) of Definition~\ref{directional} satisfying: (i)
$\rho(T^\bt x,T^{h(\bt)}y)<\delta'/3$. Then by part (\ref{useful2}) of Lemma~\ref{useful} and Lemma~\ref{added} 
 we have: (ii) $\rho(T^\bt y,T^{h(\bt)}y)
 =||h(\bt)-\bt||<2\delta'/3$.
 Thus $\rho(T^\bt x,T^\bt y)\le \rho(T^\bt x,T^{h(\bt)}y)+\rho(T^{h(\bt)} y,T^\bt y)<\delta'/3+2\delta'/3=\delta'$,
by (i) and (ii). It follows from weak expansiveness (part (1) of Definition~\ref{directional}) that $y=T^{\bs_0}x$ 
 for some $\bs_0\in\R^2$ with $||\bs_0||<\epsilon$. This  implies $T$ is strongly expansive (for $\delta=\delta'/3$)
in the direction $\cV$.
\end{proof}

\begin{definition}\label{exp0}
Let $\cV$ be a 1-dimensional direction. 
We say an FLC tiling dynamical system $T$ is direction $\cV$ \emph{symbolically expansive} 
if there is an $r>0$ so that 
$x[\cV^r]=y[\cV^r]$ for all $x,y\in X$ implies $x=y$.
\end{definition}

\noindent The following should be compared to Lemma~\ref{asashift}.

\begin{proposition}\label{exp}
For an FLC tiling dynamical system $(X,\R^2,T)$, direction $\cV$ symbolic expansiveness 
 is equivalent to  direction $\cV$ expansiveness.
\end{proposition}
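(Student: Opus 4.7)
The plan is to prove Proposition~\ref{exp} by a two-way argument that translates between metric closeness of $T^\bt x,T^\bt y$ on $\cV$ and combinatorial agreement of $x,y$ on a tube $\cV^r$, using Lemma~\ref{useful} as the bridge. Part (2) of that lemma converts ``$\rho<\delta$'' into ``patches agree on a ball of radius $(8/9)/\delta$ after a small shift'', while parts (1) and (3) rigidly pin down such small shifts.

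\emph{Direction $\cV$-expansive $\Rightarrow$ symbolic.} I would fix any $\epsilon<\Delta_p$, let $\delta>0$ be the associated constant from Definition~\ref{directional}({\it i\,}), and set $r>1/\delta$. Suppose $x[\cV^r]=y[\cV^r]$. Since $\cV$ is a linear subspace, $\bt+B_r(\bZr)\subseteq\cV^r$ for all $\bt\in\cV$, so $(T^\bt x)[B_r(\bZr)]=(T^\bt y)[B_r(\bZr)]$, and taking $\br_x=\br_y=\bZr$ in Definition~\ref{tmetric} yields $\rho(T^\bt x,T^\bt y)\le 1/r<\delta$. Weak directional expansiveness then produces $\bs_0\in\R^2$ with $y=T^{\bs_0}x$ and $\|\bs_0\|<\epsilon<\Delta_p$. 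Since $x[\{\bZr\}]=y[\{\bZr\}]=(T^{\bs_0}x)[\{\bZr\}]$, Lemma~\ref{useful}(1) forces $\bs_0=\bZr$, hence $x=y$.

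\emph{Symbolic $\Rightarrow$ direction $\cV$-expansive.} Let $r>0$ witness symbolic expansiveness. Given $\epsilon>0$, I would choose $\delta<\min(\epsilon,1/3,\Delta_p/2,\,8/(9r))$. Assume $\rho(T^\bt x,T^\bt y)<\delta$ for every $\bt\in\cV$. By Lemma~\ref{useful}(2), for each such $\bt$ there is $\bs_\bt\in\R^2$ with $\|\bs_\bt\|<\delta$ and
\[
x[B_{(8/9)/\delta}(\bt)]=(T^{\bs_\bt}y)[B_{(8/9)/\delta}(\bt)].
\]
Fix a unit vector $\bv$ spanning $\cV$ and set $\bt_n:=n\eta\bv$ with $0<\eta<(8/9)/\delta$, so that consecutive balls $B_{(8/9)/\delta}(\bt_n)$ and $B_{(8/9)/\delta}(\bt_{n+1})$ overlap. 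At any common point $\bt^*$ we have $(T^{\bs_{\bt_n}}y)[\{\bt^*\}]=x[\{\bt^*\}]=(T^{\bs_{\bt_{n+1}}}y)[\{\bt^*\}]$ with $\|\bs_{\bt_n}-\bs_{\bt_{n+1}}\|<2\delta<\Delta_p$, so Lemma~\ref{useful}(1) forces $\bs_{\bt_n}=\bs_{\bt_{n+1}}$. Chaining in both directions along $\cV$ yields a single $\bs\in\R^2$, $\|\bs\|<\delta$, for which $x$ and $T^\bs y$ agree on $\cV^{(8/9)/\delta}\supseteq\cV^r$. Symbolic expansiveness then gives $x=T^\bs y$, i.e.\ $y=T^{-\bs}x$ with $\|{-}\bs\|<\delta<\epsilon$, as required.

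\emph{Main obstacle.} The delicate step is the chaining in the second direction---upgrading the $\bt$-dependent local shifts $\bs_\bt$ into one global constant $\bs$. This is the directional analogue of the inductive ``claim'' in the proof of Proposition~\ref{strongly}, and it works for the same reason: $\cV$ is connected and Lemma~\ref{useful}(1) is rigid. The bookkeeping I have to watch is simply that all chosen constants fit together so that successive balls genuinely overlap and the differences $\bs_{\bt_n}-\bs_{\bt_{n+1}}$ stay strictly below $\Delta_p$; everything else is a direct packaging of the FLC metric estimates from Lemma~\ref{useful}.
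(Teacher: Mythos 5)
Your proposal follows the paper's own proof essentially step for step: the easy direction (expansiveness $\Rightarrow$ symbolic) uses Lemma~\ref{useful}(1) to kill the small shift exactly as the paper does, and the hard direction uses Lemma~\ref{useful}(2) plus the same chaining-of-shifts argument along overlapping balls centered on $\cV$ (the paper's choice of radius and spacing $r'=\tfrac{2}{3}\sqrt{3}\,r$ is your $R$ and $\eta$). One small correction to your bookkeeping: with centers spaced $\eta$ apart, $\bigcup_n B_R(\bt_n)$ contains only the tube $\cV^{\sqrt{R^2-(\eta/2)^2}}$, never $\cV^{R}$ itself, so you must additionally require $\eta\le 2\sqrt{R^2-r^2}$ (which is possible precisely because your choice $\delta<8/(9r)$ makes $R>r$) in order to conclude agreement on $\cV^{r}$.
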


\begin{proof}
Suppose 
Definition~\ref{exp0} holds for 
$T$ 
in the direction $\cV=\{t\bv:t\in\R\}$, $||\bv||=1$, for  $r>0$. 
Fix $\epsilon>0$, and for $r'=(2/3)\sqrt{3}r$,
let $\delta<\min(\epsilon, 1/r',\Delta_p/2)$.
Let $\bt_n=nr'\bv$, $n\in\Z$.
Then $\cV^r\subseteq\cup_{n\in\Z}B_{r'}(\bt_n)$. 
Let $C_n:=B_{r'}(\bt_n)\cap B_{r'}(\bt_{n+1})$.
Now suppose $\rho(T^\bt x,T^\bt y)<\delta$ for all $\bt\in \cV$. 
For each $n\in\Z$ there is a 
small shift, 
$||\br_n||<\delta$, such that 
$(T^{\bt_n} x)[B_{r'}({\bZr})]=(T^{\br_n} T^{\bt_n} y)[B_{r'}({\bZr})]$,
or equivalently
$x[B_{r'}({-\bt_n})]=(T^{\br_n} y)[B_{r'}({-\bt_n})]$. 
We claim $\br_n=\br_{n+1}$ for all $n\in\Z$.  
we have $(T^{\br_n} y)[C_n]=x[C_n]=(T^{\br_{n+1}} y)[C_n]$ where 
$||\br_n||,||\br_{n+1}||<\delta$. Since $||\br_{n+1}-\br_n||<\Delta_p$,
 (\ref{useful3}) of Lemma~\ref{useful} implies
$\br_{n+1}=\br_n$. So $x[B_{r'}({-\bt_n})]=(T^{\br_0} y)[B_{r'}({-\bt_n})]$
for all $n$. But the definition of $r'$ implies 
$\cV^r\subseteq\cup_{n\in\Z}B_{r'}(\bt_n)$. 
Thus $x[\cV^r]=(T^{\br_0} y)[\cV^r]$, and by Definition~\ref{exp0}, 
$x=T^{\br_0} y$ with $||\br_0||<\delta<\epsilon$.

Conversely, suppose $T$ is weakly expansive in direction $\cV$. Given $\epsilon>0$, 
let $\delta<\min(\epsilon,\Delta_p)$, and let $r>1/\delta$.
Now suppose $x[\cV^r]=y[\cV^r]$. Then $x[B_r(-\bt)]=y[B_r(-\bt)]$ for 
all $\bt\in\cV$, or equivalently, $(T^\bt x)[B_r({\bZr})]=(T^\bt y)[B_r({\bZr})]$ 
for all $\bt\in \cV$. Thus $\rho(T^\bt x,T^\bt y)=1/r<\delta$. 
By weak expansiveness in direction $\cV$,
$y=T^{\bs_0}x$ with $||\bs_0||<\epsilon<\delta<\Delta_p$.
But   $(T^{\bs_0}x)[\cV^r]=x[\cV^r]$, so by (\ref{useful3}) of Lemma~\ref{useful}, 
we have $\bs_0={\bZr}$ and $x=y$.
\end{proof}

\section{The Penrose tiling dynamical system}\label{four}

\subsection{Penrose tilings}\label{pt}

In the next two sections (4 and 5), we discuss two types of Penrose tilings: the 
\emph{arrowed rhombic Penrose tilings} and the \emph{unmarked rhombic Penrose tilings}
(see \cite{Robinson}, \cite{Robinson1}, \cite{BG}). Later, in Section 6, 
we will discuss two additional types of tilings 
whose tiling dynamical systems 
are topologically conjugate to the Penrose tiling dynamical system, (see Theorem~\ref{pwt}).

The arrowed rhombic Penrose tiles, denoted by $p_{5{\rm a}}$, are versions of the tiles from $p_5$
(Example~\ref{rhombic}) 
marked with arrows. 
Two of these tiles  are shown 
in Figure~\ref{ftiles}(a). The entire set $p_{5{\rm a}}$
of $20$ prototiles  is obtained by taking all rotations of these two tiles by multiples of 
$2\pi/10$. To make an FLC prototile set,
the local rule $p^{(2)}_{5{\rm a}}$  requires 
two tiles to meet edge to edge with matching arrows (see Figure~\ref{ftiles}). 
\begin{figure}[h!]
\includegraphics[scale =.9]{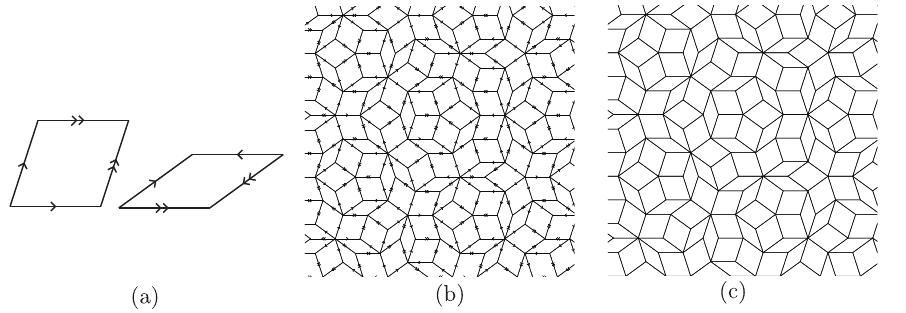}
\caption{(a) two arrowed rhombic Penrose tiles, (b) arrowed rhombic Penrose tiling patch, and 
(c) the same patch with arrows erased. The erasing map $E$ has a local inverse $E^{-1}$.\label{ftiles}}
\end{figure}

The \emph{erasing map} $E: X_{p_\text{5{\rm a}}}\to X_{p_5}$, which erases the arrows, is 
not onto (see \cite{DB}) since no periodic tiling can be correctly decorated with arrows. 
Clearly $E(X_{p_\text{5{\rm a}}})$ is a tiling space.

\begin{definition}
The \emph{unmarked Penrose tiling} tiling space 
is defined by $\sX:=E(X_{p_\text{5{\rm a}}})\subseteq X_{p_5}$, and we call 
the $(\sX,\R^2,T)$ the (unmarked rhombic) \emph{Penrose tiling dynamical system}. 
\end{definition}

\begin{lemma}[deBruijn, \cite{DB}]\label{arrows}
The erasing map $E: X_{p_\text{5{\rm a}}}\to X_{p_5}$
is {\rm 1:1}, and for $\sX=E(X_{p_\text{5{\rm a}}})$,
the inverse $E^{-1}:\sX\to X_{p_\text{5{\rm a}}}$ is an MLD topological  
conjugacy.
\end{lemma}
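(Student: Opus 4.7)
The plan is to reduce the lemma to the combinatorial fact that the arrow decoration on every tile in an arrowed rhombic Penrose tiling is locally determined by the underlying unmarked tiling. First, I would note that $E$ itself is trivially local with radius zero: $E(x)[\{\bZr\}]$ is simply $x[\{\bZr\}]$ with arrows erased, so by Definition~\ref{local}, $E$ is a local map, hence continuous and translation equivariant. It is surjective onto $\sX$ by definition, so $E:X_{p_\text{5{\rm a}}}\to\sX$ is a continuous surjection between compact metric spaces.

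The substantive task is to show simultaneously that $E$ is injective and that $E^{-1}:\sX\to X_{p_\text{5{\rm a}}}$ is local. I would accomplish both by exhibiting, for some fixed $r>0$, a map $c$ on the patches $y[B_r(\bZr)]$ (with $y\in\sX$) that returns a single arrowed tile at the origin, and which agrees with the unique arrow decoration of any preimage of $y$ under $E$. Concretely, given $y\in\sX$, the arrows on the tile $D_0\in y$ covering $\bZr$ can be recovered as follows: at each vertex $v$ of $D_0$, consider the unmarked \emph{vertex star} consisting of all tiles of $y$ incident to $v$. There are finitely many possible unmarked vertex stars that can occur in a Penrose tiling (the classical eight configurations traditionally named Sun, Star, King, Queen, Jack, Ace, Deuce, Boat), and a direct case-by-case verification shows that each of them admits exactly one arrow decoration consistent with the matching rule $p^{(2)}_{5{\rm a}}$. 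Hence the arrows on $D_0$ are uniquely determined by any single such vertex star. It suffices to take $r$ larger than twice the tile diameter so that $y[B_r(\bZr)]$ contains a complete vertex star at every vertex of $D_0$.

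Existence of an arrowed preimage of any $y\in\sX$ is immediate from the definition $\sX:=E(X_{p_\text{5{\rm a}}})$. The construction above gives uniqueness of the preimage (hence $E$ is 1:1) and realises $E^{-1}$ as a local map with local radius $r$. Combined with the trivial locality of $E$ in the forward direction, this exhibits $E$ as a bijective local homeomorphism with local inverse, which is precisely an MLD topological conjugacy.

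The main obstacle is the combinatorial classification of the admissible unmarked vertex stars together with the verification that each forces a unique consistent arrow decoration; this is the essential content of de Bruijn's theorem \cite{DB}, where it emerges naturally from the pentagrid duality / cut-and-project description of Penrose tilings. In a self-contained write-up one would either cite \cite{DB} directly or carry out the finite enumeration of vertex stars. All remaining ingredients — locality of $E$, existence of arrowed preimages, and the reduction from balls $B_r(\bZr)$ to finite collections of vertex stars via the FLC structure (Lemma~\ref{useful}) — are routine.
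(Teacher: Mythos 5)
The central combinatorial claim in your proposal --- that every admissible unmarked vertex star admits exactly one arrow decoration consistent with $p^{(2)}_{5{\rm a}}$ --- is false, and this is precisely the subtlety that the paper's proof has to deal with. De Bruijn's classification gives seven arrowed vertex configurations up to rotation (the paper's Figure~3), and two of them, the types labelled ``S'' and ``S5'' (the sun and the star), have \emph{identical} unmarked vertex stars: both consist of five thick rhombi meeting the central vertex by their $2\pi/5$ corners. After erasing arrows these two configurations cannot be told apart, so the vertex star of a tile does not by itself determine its arrows; your proposed case-by-case verification would fail at exactly this case, and the map $c$ you define, with radius chosen only large enough to capture one vertex star at each vertex of $D_0$, is not well defined.

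The repair is the second half of the paper's one-line proof: an S vertex and an S5 vertex can never be adjacent in a Penrose tiling, so the unmarked vertex stars occurring at vertices adjacent to an S vertex differ from those occurring adjacent to an S5 vertex; enlarging the local radius so that $y[B_r(\bZr)]$ also contains the vertex stars of the neighboring vertices resolves the ambiguity. With that modification your argument goes through, and the remaining scaffolding in your write-up --- forward locality of $E$, surjectivity onto $\sX$ by definition, and the conclusion that a bijective local map with a local inverse is an MLD topological conjugacy --- is correct and matches the paper's framework. In short, the structure is right, but as written the proof rests on a false statement: uniqueness of the arrowing is a two-vertex-star phenomenon, not a one-vertex-star phenomenon.
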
 
\begin{proof} 
It is shown in \cite{DB} that, up to rotation there are only seven vertex configurations 
(see Figure~\ref{vertextypes})
among all the $x\in X_{p_\text{5{\rm a}}}$.  
Types ``S'' and ``S5'' are the only two that are the same in $E(x)$, and since they can never be adjacent,
they can be distinguished in $E(x)$ by the patterns on at least one adjacent vertex. 
\end{proof}

\begin{figure}[h!] 
\includegraphics[width=4.5in]{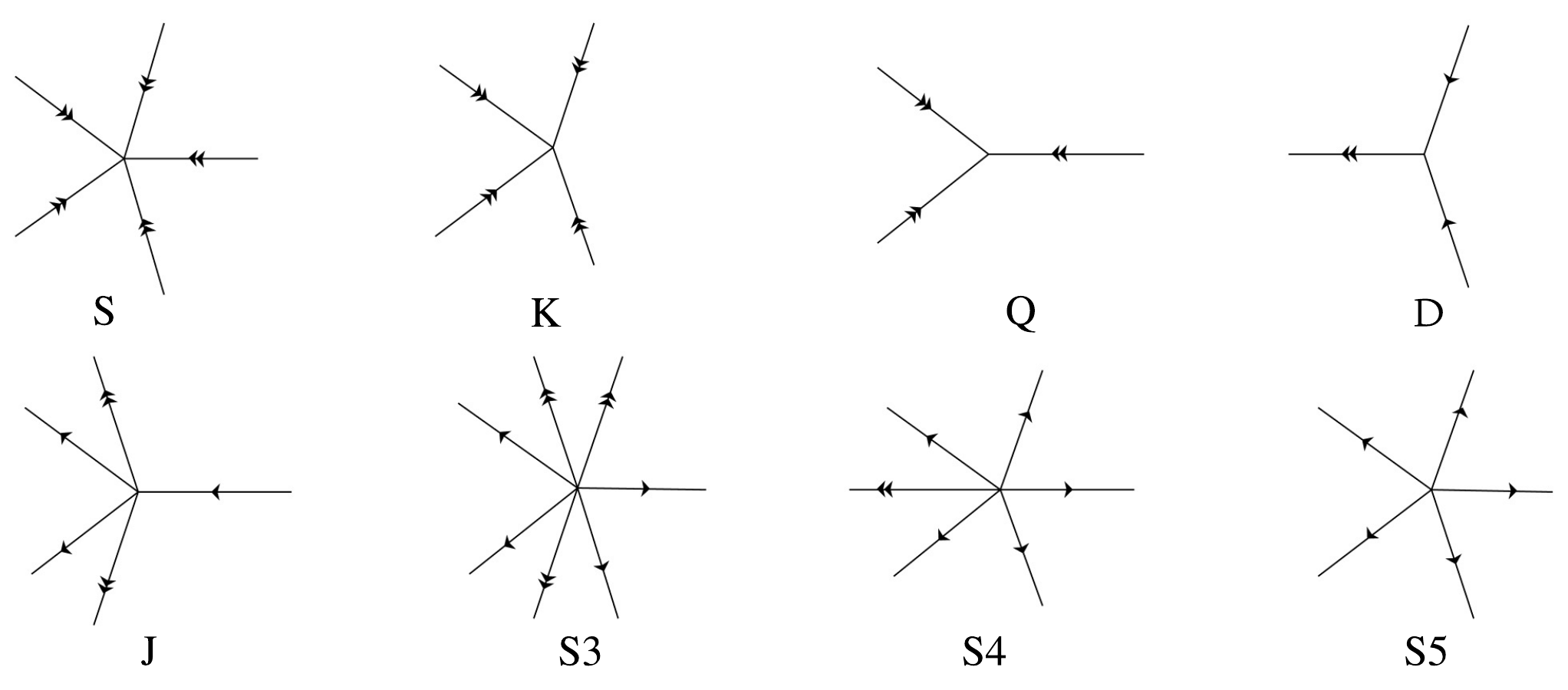}
\caption{Up to rotation there are seven arrowed vertex configurations in all $x\in X_{p_\text{5{\rm a}}}$.
The labels for the types are from 
 \cite{DB}.}\label{vertextypes}
\end{figure}

\subsection{The Kronecker model for Penrose tilings}

Let $\T^5_0=\{\bu\in\T^5: \{u_0+\cdots+u_4\}=0\}$, 
a closed subtorus of $\T^5$, homeomorphic to $\T^4$. 
For $\bv_j=({\rm Re}(e^{2\pi i/5}), {\rm Im}(e^{2 \pi i /5}))$, the $j$th 5th root of unity,  
let $W=[\bv_0^t;\bv_1^t;\dots;\bv_4^t]$ be the $5\times 2$ matrix rows $\bv_j$. 
Let $V$ be the $4\times 2$ matrix obtained by deleting the row of $\bv_3^t$ from $W$.
For $\bt\in\R^2$,  
define $K^\bt \bu=\{\bu+W\bt\}$ on $\T^5$ and $L^{\bt}{\bu}=\{{\bu}+V{\bt}\}$ on $\T^4$.
Clearly $(\T^5,\R^2,K)$ and $(\T^4,\R^2,L)$ are Kronecker systems, with 
$\Sigma_K=\Sigma_L=V^t\Z^5=W^t \Z^4=\Z[e^{2\pi i /5}]\subseteq\R^2$,  the decomplexification of the 
complex ring generated by $e^{2\pi i /5}$. 
Note that $K$ is not minimal since
$\T^5_0$ is $K$ invariant, but $L$ is minimal 
since  
$W^t$ is injective on $\Z^4$. Moreover,  $(\T^4,\R^2,L)$ is topologically conjugate to $(\T^5_0,\R^2,K|_{\T^5_0})$. 
This is 
the unique minimal Kronecker $\R^2$ dynamical system with $\Sigma_K=\Z[e^{2\pi i /5}]$.
The following is the main result of \cite{Robinson1}.

\begin{theorem}[see \cite{Robinson1}]\label{RPenrose}
The Penrose tiling dynamical system $(\sX,\R^2,T)$ is strictly ergodic and
almost automorphic, with
$\Sigma_T=\Z[e^{2\pi i /5}]$ 
 and $\mathfrak{M}_T=\{1,2,10\}$.
\end{theorem}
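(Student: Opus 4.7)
The proof plan uses de Bruijn's pentagrid duality \cite{DB} to construct an explicit factor map $\varphi : \sX \to \T^5_0$ from the Penrose tiling dynamical system onto the Kronecker $\R^2$-action $K|_{\T^5_0}$, which is topologically conjugate to $(\T^4, \R^2, L)$. First I would construct $\varphi$ via de Bruijn's correspondence. Given $x \in \sX$, the arrowed lift $E^{-1}(x) \in X_{p_{5\text{a}}}$ (Lemma~\ref{arrows}) determines five families of ``de Bruijn lines'' (Ammann bars), where the $j$th family consists of lines perpendicular to $\bv_j$, indexed by integer levels with offset $u_j \in \T$. Translation of $x$ by $\bt$ shifts each $u_j$ by $\bv_j \cdot \bt$, which is exactly the Kronecker action $K^\bt$ on the vector $\bu = (u_0, \ldots, u_4)$. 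The pentagrid constraint $\sum_j u_j \in \Z$ places $\bu$ in $\T^5_0$, so setting $\varphi(x) := \bu$ gives a continuous, $\R^2$-equivariant surjection onto $\T^5_0$.

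Next I would analyze the fibers of $\varphi$ via the inverse pentagrid construction: from any $\bu \in \T^5_0$, form the five families of parallel lines at offsets $u_j$ (normals $\bv_j$), and dualize to produce a rhombic tiling. Call $\bu$ \emph{regular} if no three pentagrid lines are concurrent; in that case the dualization is unambiguous and $|\varphi^{-1}(\bu)| = 1$. The \emph{singular} locus $\T^5_0 \setminus Y_1$ is the countable union of the codimension-one subtori cut out by equations of the form $u_i + u_j + u_k \in \Z$ (for indices with $\bv_i, \bv_j, \bv_k$ concurring geometrically), hence meager, and its complement $Y_1$ is a dense $G_\delta$. On the singular set one must distinguish two combinatorial types: points where a single triple concurrence occurs give rise to exactly two ways to fill in the ambiguous rhombi (shifting a ``worm'' of tiles), yielding $|\varphi^{-1}(\bu)| = 2$; while the most degenerate orbit, where all five families pass through a common point, produces a ten-fold ambiguity (two ``starfish'' or ``cartwheel'' tilings with the full $D_5$ symmetry, giving $|\varphi^{-1}(\bu)| = 10$).

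From this fiber analysis I would read off $\mathfrak{M}_T = \{1,2,10\}$, and since $\varphi^{-1}(Y_1)$ is a dense $G_\delta$ on which $\varphi$ is bijective, $T$ is almost automorphic with Kronecker factor $K|_{\T^5_0} \cong L$. This gives the containment $\Sigma_T \supseteq \Sigma_L = W^t \Z^4 = \Z[e^{2\pi i/5}]$ by pullback; the reverse containment follows because for almost automorphic systems every continuous eigenfunction factors through the maximal equicontinuous factor, so $\Sigma_T = \Sigma_L$. Minimality of $T$ follows from minimality of $L$ (injectivity of $W^t : \Z^4 \to \R^2$, which is immediate from the fact that $1, e^{2\pi i /5}, e^{4\pi i /5}, e^{6\pi i /5}$ are rationally independent as real pairs) combined with the fact that $\varphi$ is surjective and almost 1:1: any closed nonempty $T$-invariant $K \subseteq \sX$ has $\varphi(K)$ closed, $L$-invariant, and nonempty in $\T^5_0$, so $\varphi(K) = \T^5_0$, and then $K \supseteq \varphi^{-1}(Y_1)$ which is dense in $\sX$. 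For unique ergodicity I would push any $T$-invariant Borel probability measure on $\sX$ forward to $\T^5_0$, obtain Haar measure by unique ergodicity of $L$, and then use that the singular set has Haar measure zero so that any invariant measure is determined on the full measure set $\varphi^{-1}(Y_1)$ where $\varphi$ is a measurable bijection onto $Y_1$.

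The main obstacle is the exact fiber enumeration, specifically the geometric classification of singular pentagrid points into the types yielding fibers of size $2$ versus $10$, and ruling out any other cardinalities. This requires a careful case analysis of which simultaneous concurrence equations $u_i + u_j + u_k \in \Z$ can hold, together with the combinatorial ``worm'' moves that describe the ambiguity of dualization — the same analysis carried out in detail in \cite{Robinson1}, which I would invoke rather than re-derive the underlying geometric lemmas. Once this classification is in hand, the ergodic-theoretic conclusions (almost automorphy, strict ergodicity, spectrum) follow from general facts about almost 1:1 extensions of minimal, uniquely ergodic Kronecker actions.
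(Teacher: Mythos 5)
Your proposal is correct and takes essentially the same route as the paper, which itself defers to \cite{Robinson1}: de Bruijn's pentagrid duality \cite{DB} yields an almost 1:1 factor map onto the minimal Kronecker action on $\T^5_0\cong\T^4$, the fiber classification (nonsingular $=1$, worm $=2$, cartwheel $=10$) gives $\mathfrak{M}_T=\{1,2,10\}$, and minimality, unique ergodicity, almost automorphy, and $\Sigma_T=\Z[e^{2\pi i/5}]$ then follow exactly as you argue from the almost 1:1 structure and the Haar-nullity of the singular set. The only inessential discrepancies are the direction in which the duality is set up (the paper constructs $y(\bu)\mapsto y(\bu)^*$ and extends its inverse by continuity, Lemma~\ref{closure}, rather than reading grid parameters off the tiling) and your description of the singular locus, whose defining equations involve golden-ratio coefficients rather than $u_i+u_j+u_k\in\Z$ --- a detail absorbed by your appeal to \cite{Robinson1} for the classification.
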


Previously, Penrose \cite{Penrose} had obtained the basic properties of Penrose tilings,
(many of which also follow from Theorem~\ref{RPenrose}),
although not expressing his results in dynamical terminology.
He showed that every $x\in X_{p_\text{5{\rm a}}}$ is aperiodic, so $T$ is free, and also that 
$T$  is minimal and uniquely ergodic  (see \cite{Robinson1}).
Penrose's proofs of these property are based on the following remarkable scaling property for Penrose tilings.

\begin{proposition}[Penrose, \cite{Penrose}, see also \cite{GS}, \cite{Robinson}]\label{defx}
There is a MLD topological conjugacy {\rm (}see Definition~\ref{local} and the comments that follow it{\rm )}
$C:X_{p_\text{5}}\to X_{(1/\gamma)p_\text{5}}$  {\rm (}or
$C:\sX\to (1/\gamma)\sX$ {\rm )}, where $(1/\gamma)p_\text{5{\rm a}}$  {\rm (}or $(1/\gamma)\sX${\rm )}
is same set of tilings,  scaled down by $1/\gamma$,
 {\rm (}see \cite{Robinson1} for a picture of $C${\rm )}.
\end{proposition}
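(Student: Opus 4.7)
The plan is to use Penrose's classical inflation/deflation construction, working with the arrowed prototile set $p_{5\text{a}}$ via Lemma~\ref{arrows}. Since $E : X_{p_{5\text{a}}}\to \sX$ is an MLD topological conjugacy (and likewise for the $1/\gamma$-scaled system), it suffices to build an MLD topological conjugacy $\widetilde C : X_{p_{5\text{a}}}\to (1/\gamma) X_{p_{5\text{a}}}$ and transfer it via $C := E\circ \widetilde C\circ E^{-1}$.

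First I would define a substitution rule $\sigma$ sending each arrowed rhombic prototile $D\in p_{5\text{a}}$ to a patch $\sigma(D)$ of $(1/\gamma)$-scaled arrowed rhombic tiles supported inside $D$. The classical rule dissects each thick rhombus into two half-thick and one thin $(1/\gamma)$-rhombus, and each thin rhombus into one half-thick plus one thin $(1/\gamma)$-rhombus, with the inner arrows determined by the outer arrows of $D$. The essential check is that whenever two tiles meet edge-to-edge consistently with $p^{(2)}_{5\text{a}}$, the fragments of $\sigma(D_1)$ and $\sigma(D_2)$ along their shared edge glue together into legal patches under the local rule of $(1/\gamma) p_{5\text{a}}^{(2)}$. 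This is a finite check over the (finitely many, up to rotation) allowed $2$-tile patches in $p^{(2)}_{5\text{a}}$. Once this is done, setting $\widetilde C(x) := \bigcup_{D\in x}\sigma(D)$ produces a tiling in $(1/\gamma)X_{p_{5\text{a}}}$, and the map is local with radius bounded by $\operatorname{diam}$ of a single tile (in the sense of Definition~\ref{local}), hence continuous and translation equivariant.

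Next I would construct a local inverse (composition rule). Given $y = \widetilde C(x)\in (1/\gamma) X_{p_{5\text{a}}}$, each small tile $D'\in y$ lies inside exactly one large tile $D\in x$, and I need a local function that identifies that large tile from a bounded neighborhood of $D'$ in $y$. Here the arrows do the work: by inspecting the image $\sigma(D)$ of each prototile, one sees that the arrows on a small tile $D'$, together with the arrows on its immediate neighbors, already tell us which ``slot'' of which larger prototile $D'$ occupies. Equivalently, the seven arrowed vertex configurations of Figure~\ref{vertextypes}, together with their relative orientations, restrict the possibilities so severely that a radius of (say) two $(1/\gamma)$-edge-lengths is enough to recover the large-tile partition uniquely. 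This gives a local map $\widetilde C^{-1}$ which is the inverse of $\widetilde C$.

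Finally I would check that $\widetilde C(X_{p_{5\text{a}}}) = (1/\gamma) X_{p_{5\text{a}}}$. One inclusion is the construction; the reverse inclusion uses compactness: any $y\in (1/\gamma)X_{p_{5\text{a}}}$ has arbitrarily large patches in $(1/\gamma)p_{5\text{a}}^*$, which by the local composition rule lift to legal patches in $p_{5\text{a}}^*$, and a standard diagonal/compactness argument produces an $x\in X_{p_{5\text{a}}}$ with $\widetilde C(x)=y$. The main obstacle is the local invertibility step: the deflation direction is a routine check, but verifying that composition is genuinely local requires ruling out any ambiguity in assigning each small tile to its parent. The arrow matching rule (Lemma~\ref{arrows}) and the finite vertex-type list of Figure~\ref{vertextypes} make this a finite case analysis, and this is exactly where \cite{Penrose,GS,Robinson} carry out the detailed verification that I would invoke.
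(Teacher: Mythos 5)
The first thing to note is that the paper does not prove Proposition~\ref{defx} at all: it is quoted as a classical result of Penrose, with the verification delegated to \cite{Penrose}, \cite{GS}, \cite{Robinson} (and a picture of $C$ in \cite{Robinson1}). Your overall strategy --- pass to the arrowed tiles via Lemma~\ref{arrows}, define a deflation $\sigma$ tile by tile, check edge compatibility, invert it by a local composition rule, and get surjectivity by compactness --- is exactly the classical argument those references carry out, so at the level of approach there is nothing to object to.

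However, the one piece of the construction you did make explicit is wrong, and with it the finite check you propose would fail. The tile counts in your dissection rule are inconsistent with areas: since the substitution matrix must have Perron eigenvalue $\gamma^2$, a thick rhomb (area $\sin(2\pi/5)$) must decompose, in effective count, into \emph{two} thick plus \emph{one} thin $(1/\gamma)$-rhomb, and a thin rhomb (area $\sin(\pi/5)$) into \emph{one} thick plus \emph{one} thin. Your rule ``two half-thick and one thin'' for the thick rhomb has total area $(\sin(2\pi/5)+\sin(\pi/5))/\gamma^2\ne\sin(2\pi/5)$ (it is the correct count for the \emph{thin} rhomb), and ``one half-thick plus one thin'' for the thin rhomb matches nothing. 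The correct realization is, e.g., thick $=$ one whole thick $+$ two half-thicks $+$ two half-thins, and thin $=$ two half-thicks $+$ two half-thins. Relatedly, your claim that $\sigma(D)$ is a patch of whole $(1/\gamma)$-tiles ``supported inside $D$'' cannot hold: as the paper notes immediately after the proposition, $C$ is an \emph{improper} inflation rather than a stone inflation in the sense of \cite{BG}, precisely because the small tiles straddle the edges of the large ones. So $\sigma(D)$ is a dissection of $D$ into small tiles \emph{and half-tiles} (Robinson triangles), $\widetilde C(x)$ is defined only after checking that the half-tiles of adjacent large tiles merge, with matching arrows, into legal whole tiles, and the local radius of $\widetilde C$ is a two-tile rather than a one-tile radius. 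With the corrected counts these repairs are routine, and the remainder of your outline (local inverse via the vertex configurations of Figure~\ref{vertextypes}, compactness for surjectivity) is the standard and sound route.
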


In addition, the map $C':=\gamma C: X_{p_\text{5{\rm a}}}\to X_{p_\text{5{\rm a}}}$,
or $C':\sX\to\sX$, 
(which scales back up to the original scale)
called a \emph{tiling substitution} or \emph{inflation},
is also a topological conjugacy (see \cite{Robinson}). Note that in \cite{BG} this would be called an \emph{improper}
inflation as opposed to a \emph{stone} inflation, because the map $C$ does not perfectly tile each $D\in p_\text{5}$
with tiles $D'\in (1/\gamma)p_\text{5{\rm a}}$. The scaled map $c'=\gamma c$ 
corresponding to $C':=\gamma C$
(see Remark~\ref{local}) 
can be applied iteratively to tiling patches rather than 
tiles, allowing one to construct ever larger patches thereby proving 
$X_{p_\text{5{\rm a}}}\ne\emptyset$.

 \subsection{Grid tiling dynamical system}

Here we recall some ideas from the proof of Theorem~\ref{RPenrose} from \cite{Robinson1} that we will use in the proof of Theorem~\ref{mainthm}. That proof is based on deBruijn's theory of grid tilings \cite{DB},
(see also \cite{Robinson} or \cite{BG}).

 For each $j=0,1,\dots,4$, $k\in \Z$, and 
 $\bu=(u_0,\dots,u_4)\in \T^5_0$, define the \emph{grid line} 
\begin{equation}\label{gli}
\ell_{j,k}=\ell_{j,k}(u_j):=\{\bv_j\}^\perp+(-u_j+k)\bv_j\subseteq\R^2.
\end{equation} 
We view the set  
$y(\bu):=\{\ell_{j,k}(u_j): j=0,1,\dots,4; k\in\Z\}$ of all grid lines 
as a non FLC tiling of $\R^2$ by convex polygonal \emph{grid tiles}, (see Figure~\ref{nsingul}), 
which we call a \emph{grid tiling}. 
We denote the 
set of all grid tilings by $Y:=\{y(\bu):\bu\in \T^5_0\}$.
\begin{figure}[h!] 
\includegraphics{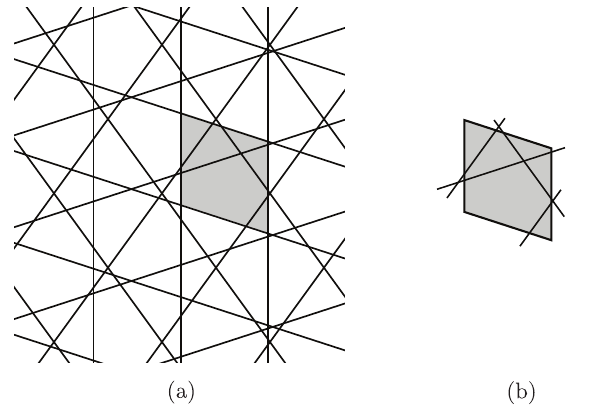}
\captionof{figure}{(a) Patch of nonsingular grid tiling $y(\bu)\in Y_1$. Note that grid tiles 
are often almost vanishingly small. One grid patch $g_\bn(\bu)$ is marked in grey, and (b) 
also shown right. As described in Definition~\ref{sigma}, this 
grid patch has symbol $\sigma(g_\bn(\bu))=0\otimes 1$. 
The dual $w_\bn(x)=g_\bn(\bu)^*$  (see Lemma~\ref{nns}) of this grid patch, where $x=y(\bu)^*$,
is of type  $w^{20}$. It is shown in Figure~\ref{wangpatches}.
\label{nsingul}}
\end{figure} 
The mapping   $\bu\mapsto y(\bu):\T^5_0\to Y$  
is a bijection, continuous in
the topology\footnote{
 The topology on $Y$ 
can also be defined geometrically using a non-FLC version of the tiling metric,
(see \cite{Robinson} or \cite{FS}).
}
that $\T^5_0$ pushes forward to $Y$. Thus
$(Y,\R^2,T)$ is a non FLC tiling dynamical system, where $T$ is the $\R^2$ translation action on $Y$.

\begin{lemma} \label{move}
The map $y:\T^5_0 \to Y$ satisfies  $y(K^\bt \bu)=T^\bt (y(\bu))$.
Thus $(Y,\R^2,T)$ is the unique minimal Kronecker dynamical system 
with spectrum $\Sigma_K=\Z[e^{2\pi i /5}]$. 
\end{lemma}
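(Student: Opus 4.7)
The plan is to establish the equivariance by a direct calculation on a single grid line, then deduce the second sentence by invoking the earlier identification of $(\T^5_0,\R^2,K|_{\T^5_0})$ with $(\T^4,\R^2,L)$. Throughout I will use the paper's sign convention $T^\bt y = \{D-\bt : D\in y\}$, so that a translation by $\bt$ sends the line $\ell_{j,k}(u_j)$ to $\ell_{j,k}(u_j)-\bt$.

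First I would verify that $K^\bt$ preserves $\T^5_0$: since $\sum_{j=0}^4 \bv_j = 0$, we have $\sum_j \bv_j\cdot\bt=0$, so $\sum_j(u_j+\bv_j\cdot\bt)=\sum_j u_j\in\Z$, hence $K^\bt\bu\in\T^5_0$. Then fix $j\in\{0,\dots,4\}$ and decompose $\bt = (\bt\cdot\bv_j)\bv_j + \bt^{\perp}$ with $\bt^{\perp}\in\{\bv_j\}^{\perp}$. Since $\{\bv_j\}^{\perp}-\bt^{\perp} = \{\bv_j\}^{\perp}$, the definition (\ref{gli}) gives
\begin{equation*}
\ell_{j,k}(u_j)-\bt \;=\; \{\bv_j\}^{\perp} + \bigl(-(u_j+\bv_j\cdot\bt)+k\bigr)\bv_j,
\end{equation*}
which is formally the same expression as $\ell_{j,k}(u_j+\bv_j\cdot\bt)$. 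Writing $u_j+\bv_j\cdot\bt = \{u_j+\bv_j\cdot\bt\} + \lfloor u_j+\bv_j\cdot\bt\rfloor$ and absorbing the integer into the index $k$, this line equals $\ell_{j,k-\lfloor u_j+\bv_j\cdot\bt\rfloor}\bigl(\{u_j+\bv_j\cdot\bt\}\bigr) = \ell_{j,k'}\bigl((K^\bt\bu)_j\bigr)$ for an integer $k'$. Letting $k$ range over $\Z$, the map $k\mapsto k'$ is a bijection of $\Z$, so the family of direction-$j$ grid lines of $T^\bt y(\bu)$ is exactly the family of direction-$j$ grid lines of $y(K^\bt\bu)$. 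Taking the union over $j$ yields $T^\bt y(\bu)=y(K^\bt\bu)$.

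For the second assertion I would argue as follows. The preceding paragraph in the paper already records that $\bu\mapsto y(\bu):\T^5_0\to Y$ is a bijection and a homeomorphism with respect to the topology pushed forward from $\T^5_0$. Combined with the intertwining just proved, this map is a topological conjugacy between $(\T^5_0,\R^2,K|_{\T^5_0})$ and $(Y,\R^2,T)$. Since $(\T^5_0,\R^2,K|_{\T^5_0})$ was identified earlier with $(\T^4,\R^2,L)$, which the paper explicitly calls the unique minimal Kronecker $\R^2$-dynamical system with $\Sigma=\Z[e^{2\pi i/5}]$, the same uniqueness transfers to $(Y,\R^2,T)$.

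The only real obstacle is the bookkeeping: one must handle the sign convention of $T^\bt$ carefully, and track the reparametrization by the integer $\lfloor u_j+\bv_j\cdot\bt\rfloor$ so that the fractional-part coordinate of $K^\bt\bu$ matches the grid line after a shift of the index $k$. Everything else reduces to invoking results already established earlier in the paper.
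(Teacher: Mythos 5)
Your proof is correct, but it takes a genuinely different route from the paper's. You establish the equivariance line by line: translating $\ell_{j,k}(u_j)$ by $-\bt$, using $\{\bv_j\}^\perp-\bt^\perp=\{\bv_j\}^\perp$, and absorbing the integer $\lfloor u_j+\bv_j\cdot\bt\rfloor$ into the index $k$, so that the two families of grid lines coincide. The paper instead introduces the $\Z^5$-valued function $\bm(\bs,\bu)=\lfloor W\bs+\bu\rfloor-\lfloor\bu\rfloor$ of (\ref{coseq}), checks that it is well defined on $\T^5_0$ and satisfies the cocycle equation $\bm(\bt+\bs,\bu)-\bm(\bs,\bu)=\bm(\bt,K^\bs\bu)$, and identifies the grid tiles of $y(\bu)$ as the level sets of $\bm(\cdot,\bu)$; the equivariance is then read off from the fact that the two sides of the cocycle equation are piecewise constant exactly on the tiles of $T^\bs y(\bu)$ and of $y(K^\bs\bu)$, respectively. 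What your computation buys is self-containedness and transparency: it uses nothing beyond the defining formula (\ref{gli}) and the sign convention for $T^\bt$, and it makes the re-indexing $k\mapsto k'$ explicit, which the paper never writes down. What the paper's choice buys is economy for the rest of the argument: the cocycle $\bm$ is precisely the object used immediately afterwards to build the duality --- the dual vertex formula (\ref{equivariant}) is $(2/5)W^t(\bm(\bs,\bu)+\{\bu\})$, and $\bm$ reappears in the proofs of Lemmas~\ref{injective} and~\ref{movement} --- so proving Lemma~\ref{move} via $\bm$ sets up machinery that is reused, whereas your argument would leave that setup still to be done. Two further points in your favor: you verify that $K^\bt$ actually preserves $\T^5_0$ (so that $y(K^\bt\bu)$ makes sense), and you spell out why the second sentence of the lemma follows, namely that the homeomorphism $y$ together with the equivariance is a topological conjugacy with $(\T^5_0,\R^2,K|_{\T^5_0})$, which the paper has already identified with the unique minimal Kronecker system $(\T^4,\R^2,L)$; the paper leaves both of these steps implicit.
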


\begin{proof}
The $\Z^d$ valued function
\begin{equation}\label{coseq}
\bm(\bs,\bu):=\lfloor W \bs+\bu\rfloor-\lfloor \bu\rfloor, 
\end{equation}
where $\bs\in\R^2$, $\bu\in\R^5$,
satisfies $\bm({\bZr},\bu)={\bZr}$, $\bm(\bs,\bu+\bk)=\bm(\bs,\bu)$, $\bk\in \Z^5$,
so is well defined on $\T^5_0$. It also satisfies the 
\emph{cocycle equation} 
$\bm(\bt+\bs,\bu)-\bm(\bs,\bu)=\bm(\bt,K^\bs\bu)$.
Fixing $\bu\in\T^5_0$, we see that $\bm(\bt,\bu)$ is constant (interiors of) 
tiles $E\in y(\bu)$, and takes a different value on each tile.
The left side of the cocycle equation is piecewise constant on the tiles of $y(\bu)-\bs=T^\bs y(\bu)$, 
whereas the right side is piecewise constant on the tiles of $y(K^\bs\bu)$.
\end{proof}

\subsection{Duality and nonsingular tilings}
 
 For an edge to edge tiling $y$ of $\R^2$ by polygonal tiles, let $v(y)$ 
 be its vertices, and $e(y)$ be its edges, with the tiles in $y$ being its faces. 
 Another edge to edge tiling $y^*$  by polygonal tiles is 
 said to be \emph{geometrically dual} to $y$ if there are bijections $y\overset{*}{\leftrightarrow} v(x)$, 
 $v(y)\overset{*}{\leftrightarrow}  x$
 and $e(y)\overset{*}{\leftrightarrow}  e(x)$, such that 
 $\ell^*\in e(x)$, $\ell\in e(y)$ implies  $\ell^*\perp\ell$. 
 
 We will now describe a geometric duality (see \cite{DB}, \cite{Robinson1} for more details)
 between the grid tilings $y=y(\bu)\in Y$
and a certain set of tilings $Y^*\supseteq X_{p5}$. 
 First, for a tile $E\in y(\bu)$, we define the vertex 
 \begin{equation}\label{equivariant}
 E^*:=(2/5) W^t(\bm(\bs,\bu)+\{\bu\})\in v(y^*), 
 \end{equation} 
for arbitrary $\bs\in {\rm int}(E)$.
We connect two vertices $E_1^*,E_2^*\in v(y^*)$ by  
an edge $\ell^* \in e(y^*)$ if their distance is $2/5$, or equivalently, if 
$E_1,E_2\in y$ are adjacent across an edge $\ell\in e(y)$. 
The edges $e(y^*)$ divide $\R^2$ into the polygonal tiles $D$ that make up $y^*$. In particular, 
if $\bb\in v(y)$ is an $n$-fold crossing, then it is the common vertex of  
$2n$ tiles $E_0,E_1,\dots, E_{2n}\in y$, going clockwise. 
The duals $E^*_0,E^*_1,\dots, E^*_{2n}\in v(y^*)$ and the edges in 
$e(y^*)$
connecting 
them enclose
a $2n$-gon $\bb^*:=D\in y^*$. The case $n=2$ is shown in 
Figure~\ref{duals}.

\begin{lemma}[See \cite{Robinson1}]\label{injective}
For $y=y(\bu)\in Y$, 
the map $y(\bu)\mapsto y^*(\bu)$ is injective and  
satisfies
\begin{equation}\label{equi}
(T^\bt y(\bu))^*=y(K^\bt\bu)^*=T^\bt (y(\bu)^*).
\end{equation}
\end{lemma}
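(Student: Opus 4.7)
The strategy is to separate the claim into three pieces: the two equalities in the equivariance chain, and the injectivity of $y\mapsto y^*$.

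The first equality $(T^\bt y(\bu))^*=y(K^\bt\bu)^*$ is immediate from Lemma~\ref{move}: the two tilings being dualized coincide in $Y$, and the $*$-construction applied to a single grid tiling (with the parameter $\bu$ determined by that tiling) produces a single dual tiling.

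For the second equality $y(K^\bt\bu)^*=T^\bt(y(\bu)^*)$, I would work vertex by vertex. Fix a face $E\in y(\bu)$ and $\bs\in\text{int}(E)$; then $E-\bt$ is a face of $y(K^\bt\bu)$ containing $\bs':=\bs-\bt$. The goal is to show that the dual vertex of $E-\bt$, computed in $y(K^\bt\bu)^*$, equals $E^*-\bt$. The paper's cocycle equation rearranges to $\bm(\bs-\bt,K^\bt\bu)=\bm(\bs,\bu)-\bm(\bt,\bu)$, and a direct computation from the definitions of $\lfloor\cdot\rfloor$ and $\{\cdot\}$ yields $\{K^\bt\bu\}-\{\bu\}=W\bt-\bm(\bt,\bu)$ in $\R^5$. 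Substituting these into the formula $(E-\bt)^*=(2/5)W^t(\bm(\bs',K^\bt\bu)+\{K^\bt\bu\})$, the integer-valued $\bm(\bt,\bu)$ contributions cancel, and the remaining $W\bt$ piece produces the desired shift after invoking the orthogonality identity $(2/5)W^tW=I_2$, which in turn comes from $\sum_j\bv_j\bv_j^t=(5/2)I_2$ for fifth roots of unity.

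For injectivity, I would exploit that the $*$-construction is geometrically reversible. Each edge of $y^*$ has length $2/5$ and is perpendicular to a specific edge of $y$; concatenating these perpendiculars across adjacent dual tiles reconstructs each grid line $\ell_{j,k}(u_j)$ as a single unbroken line in $\R^2$. Thus $y^*$ determines the unordered set of grid lines, i.e.\ determines $y\in Y$; composing with the bijection $\bu\leftrightarrow y(\bu)$ stated above Lemma~\ref{move} gives injectivity of $\bu\mapsto y(\bu)^*$ as well.

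The main obstacle I foresee is the equivariance bookkeeping in the second step: matching the lattice-valued correction $\bm(\bt,\bu)$ against the continuous piece $W\bt$ coming from $\{K^\bt\bu\}-\{\bu\}$, and tracking the factor $2/5$ through $W^tW$, so that the vertex expression collapses cleanly to $E^*-\bt$. Once that bookkeeping is right, the first equality and the injectivity are essentially structural.
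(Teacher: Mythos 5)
Your overall decomposition (first equality via Lemma~\ref{move}, second equality by tracking dual vertices, injectivity last) is the same as the paper's, and your two identities $\bm(\bs-\bt,K^\bt\bu)=\bm(\bs,\bu)-\bm(\bt,\bu)$ and $\{K^\bt\bu\}-\{\bu\}=W\bt-\bm(\bt,\bu)$ are both correct. But the assembly does not close as you wrote it. Substituting them into the formula you quote from (\ref{equivariant}), $(E-\bt)^*=\tfrac25W^t\bigl(\bm(\bs-\bt,K^\bt\bu)+\{K^\bt\bu\}\bigr)$, the two copies of $\bm(\bt,\bu)$ enter with the \emph{same} sign, so they add rather than cancel: you get $(E-\bt)^*=E^*+\bt-\tfrac45W^t\bm(\bt,\bu)$. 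Even for $\bt$ so small that $\bm(\bt,\bu)=\bZr$, this is $E^*+\bt$, which is the opposite of the displacement $T^\bt$ requires (recall the paper's convention $T^\bt x=\{D-\bt:D\in x\}$), i.e.\ the plus convention is \emph{anti}-equivariant. The cancellation you describe happens precisely when the dual vertex carries $-\{\bu\}$ instead of $+\{\bu\}$: then $\bm(\bs-\bt,K^\bt\bu)-\{K^\bt\bu\}=\bm(\bs,\bu)-\{\bu\}-W\bt$, and $\tfrac25W^tW=I$ gives $(E-\bt)^*=E^*-\bt$ as wanted. This is in fact what the paper's own proof does: it writes $v(y(\bu)^*)=\tfrac25 W^t(\lfloor W\R^2+\bu\rfloor-\bu)$, the minus convention, so (\ref{equivariant}) as printed carries a sign slip which your computation inherits. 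Once the sign is fixed, your vertex-by-vertex argument is exactly the paper's computation, done pointwise with the cocycle equation instead of globally on the vertex set.

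The injectivity step is where you genuinely depart from the paper, and it has a gap. The edges of $y^*$ dual to the segments of a fixed grid line $\ell_{j,k}(u_j)$ are all parallel to $\bv_j$, but they are \emph{not} collinear: the trail of dual tiles zigzags, staying only within a bounded distance of $\ell_{j,k}$ (compare Lemmas~\ref{movement} and~\ref{follow}). So ``concatenating perpendiculars'' recovers the combinatorial ribbon structure of the grid --- which families of lines occur and in what order they cross --- together with the lines' positions only up to a bounded error; it does not, as stated, recover the exact offsets $u_j$, and exact recovery is what injectivity needs. That inversion is genuinely nontrivial: in the paper it is the content of Section~5 (reading Sturmian sequences off Wang patches, Lemma~\ref{readsturmian}), and in \cite{DB} it is a separate argument. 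The paper's proof avoids it entirely: it cites \cite{DB} only for the statement that non-translate grid tilings have non-translate duals, so $y_1^*=y_2^*$ forces $y_2=T^\bt y_1$, and then the equivariance (\ref{equi}) just proved gives $y_1^*=T^\bt(y_1^*)$, whence $\bt=\bZr$ by aperiodicity of the dual tilings. You should either supply an honest offset-recovery argument (Sturmian coding or deBruijn's inversion) or adopt the paper's route, which uses equivariance precisely to sidestep that step.
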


\begin{proof}
We first prove  (\ref{equi}).
The first equality follows from Lemma~\ref{move}. For the second, we have that
$v(y(\bu)^*)
=\frac25 W^t(\bm(\R^2,\bu)-\{\bu\})
=\frac25 W^t(\lfloor W\R^2+\bu\rfloor-\bu)$.
Then $K^\bt \bu=W\bt+\bu$ and $\frac25 W^tW=I$ imply 
$v(y(K^\bt \bu)^*)
=\frac25 W^t(\lfloor W\R^2+W\bt+\bu\rfloor- (W\bt+\bu))
=\frac25 W^t(\lfloor W\R^2+\bu\rfloor-\bu)-\bt
=v(y(\bu)^*)-\bt$. 
Since $v(y^*)$ determines $y^*$, we have
$y^*(K^\bt \bu)=y^*(\bu)-\bt=T^\bt y^*(\bu)$.
 
 The duality is clearly injective  up to equivalence: 
if $y_1,y_2\in Y$ are not translates then $y^*_1,y^*_2$ cannot be translates, \cite{DB}.
The injectivity of $y\mapsto y^*$ then follows from (\ref{equi}).
\end{proof}

We call  a grid tiling $y=y(\bu)\in Y$ \emph{nonsingular}
if every vertex $\bb\in v(y)$ is a 2-fold crossing. We 
denote the set of all nonsingular grid tilings 
by $Y_1$. 
Note that $y^{-1}(Y_1)\subseteq\T^5_0$ is dense $G_\delta$, as is  
$Y_1\subseteq Y$.
Now suppose $y=y(\bu)\in Y_1$, and the vertex 
$\bb\in v(y)$ 
is a ($2$-fold) crossing of
 $\ell_{i,k}$ and $\ell_{j,k'}$, $i<j$. Then 
 its  dual $\bb^*$ in $y^*$ is a type $\bv'_i\wedge\bv'_j$ tile in $p_5$ (see  Figure~\ref{duals}). 
 Thus $y\in Y_1$ implies
 $y^*\in X_{p_5}$.
 But in fact, much more is true.
 
 \begin{theorem}[deBruijn, \cite{DB}]
If $y=y(\bu)\in Y_1$ then  $x(\bu):=y(\bu)^*\in \sX$. In other words, the dual  
$x(\bu)$ of any nonsingular grid tiling $y(\bu)$ is an unmarked  Penrose tiling.
\end{theorem}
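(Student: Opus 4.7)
The plan is to construct an arrowed tiling $\tilde x(\bu) \in X_{p_\text{5{\rm a}}}$ whose arrow-erasure is $x(\bu) := y(\bu)^*$; since $\sX = E(X_{p_\text{5{\rm a}}})$, this will give $x(\bu) \in \sX$. That $x(\bu)$ is already a rhombic tiling in $X_{p_5}$ was observed in the paragraph preceding the theorem: since $y(\bu) \in Y_1$, every vertex of $y(\bu)$ is a $2$-fold crossing, so every face of $x(\bu)$ is a rhombus of some shape $\bv'_i \wedge \bv'_j$, $0 \le i < j \le 4$, and the edge-to-edge local rule $p^{(2)}_5$ follows from the duality. What remains is to check arrow-decorability.

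First I would exploit that each edge $\ell^* \in e(x(\bu))$ is dual to a segment of a unique grid line $\ell_{j,k}$, so it carries a canonical index $j \in \{0,1,\dots,4\}$ together with the canonical unit direction $\bv_j$ (the dual edge is drawn parallel to $\bv_j$, as dual edges are perpendicular to their dual grid lines). Following de Bruijn, I decorate $\ell^*$ with a fixed arrow pattern determined by $j$, namely single or double arrows oriented along $\bv_j$ according to the standard convention that produces $p_\text{5{\rm a}}$. Two adjacent rhombi in $x(\bu)$ meet across a single edge $\ell^*$ whose index $j$ and direction $\bv_j$ are intrinsic to $\ell^*$, so both rhombi see the very same arrow pattern on that edge; hence the Penrose edge-matching rule $p^{(2)}_\text{5{\rm a}}$ is automatically satisfied across every interior edge. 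It then remains to check, as a finite verification, that each rhombus $\bv'_i \wedge \bv'_j$ decorated in this way is congruent by translation to one of the $20$ arrowed prototiles of $p_\text{5{\rm a}}$: the decoration on the two edge-families in directions $\bv_i$ and $\bv_j$ is forced by the rule of the previous sentence, and the $10$ shapes $\bv'_i \wedge \bv'_j$ together with the rotations produced by replacing $\bu$ by an appropriate $K$-translate realize exactly the $20$ patterns of $p_\text{5{\rm a}}$.

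The main obstacle is the finite combinatorial check in the last step: one must fix de Bruijn's convention for which $j \in \{0,\dots,4\}$ gets single versus double arrows and then verify case by case that the grid-induced decoration on each $\bv'_i \wedge \bv'_j$ matches the arrow pattern of one of the twenty prototiles. A cleaner alternative is to instead verify that at every dual vertex $E^* \in v(x(\bu))$ only the seven arrowed vertex configurations of Figure~\ref{vertextypes} arise, which can be done by enumerating the possible angular arrangements of the five grid-line direction classes around $E^*$ under the nonsingularity assumption. A purely dynamical shortcut is to exhibit a single $\bu_0 \in \T^5_0$ with $y(\bu_0) \in Y_1$ and $x(\bu_0) \in \sX$, then use the equivariance $x(K^\bt \bu_0) = T^\bt x(\bu_0)$ of Lemma~\ref{injective}, density of the $K$-orbit of $\bu_0$ in $\T^5_0$, continuity of $y \mapsto y^*$ on $Y_1$ in the tiling metric, and closedness of $\sX$ in $X_{p_5}$, to propagate the conclusion to every $\bu$ with $y(\bu) \in Y_1$.
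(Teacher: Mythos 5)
Two preliminary remarks: the paper itself gives no proof of this statement---it is quoted from de Bruijn \cite{DB}---so the relevant comparison is with de Bruijn's argument; and your reduction to ``find a consistent arrowing'' is the right framing, since $\sX=E(X_{p_{5{\rm a}}})$. The genuine gap is in your decoration scheme, and it sits exactly where de Bruijn's real work lies. You decorate each dual edge with ``a fixed arrow pattern determined by $j$,'' the direction class of its grid line. No such decoration can work. Under your scheme, in each rhombus $\bv'_i\wedge\bv'_j$ the two opposite edges parallel to $\bv_i$ (which are dual to the two segments of the \emph{same} $i$-grid line on either side of the crossing) receive identical markings, and likewise the two edges parallel to $\bv_j$. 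But in every one of the $20$ arrowed prototiles of $p_{5{\rm a}}$, the two opposite edges of a rhombus carry \emph{different} markings: one a single arrow, the other a double arrow. So the ``finite verification'' you defer would fail for every single tile, not just some. In de Bruijn's proof the arrow type of an edge is not a function of its direction; it depends on his \emph{index function}, which in the paper's notation is essentially the sum of the five integer coordinates of $\bm(\bs,\bu)$ (equivalently of $\lfloor W\bs+\bu\rfloor$), constant on each grid face and taking only the values $1,2,3,4$ when $y(\bu)\in Y_1$. Edges joining vertices of indices $\{1,2\}$ or $\{3,4\}$ get single arrows, edges joining $\{2,3\}$ get double arrows (with orientations fixed by a convention). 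Since the four faces around a $2$-fold crossing have indices $a,a+1,a+2,a+1$, opposite edges of each dual rhombus automatically acquire different arrow types, and only then does the case check against the $20$ prototiles succeed. This index function is the missing idea; without it your construction produces tilings by marked rhombi that match along edges but whose tiles are not Penrose prototiles.

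Your two fallback routes do not close the gap either. The vertex-configuration alternative presupposes the arrowing you are trying to build (the seven configurations of Figure~\ref{vertextypes} are \emph{arrowed} configurations), so it is circular as stated. The dynamical shortcut is sound in outline---equivariance from Lemma~\ref{injective}, density of the $K$-orbit, closedness of $\sX$, plus continuity of $\bu\mapsto y(\bu)^*$ at nonsingular parameters (note this last point is \emph{not} Lemma~\ref{closure}, which concerns the map $\varphi:\sX_1\to Y_1$ in the opposite direction, so it would need its own proof)---but it only reduces the theorem to the base case: exhibiting one nonsingular $\bu_0$ with $x(\bu_0)\in\sX$. Verifying that a single explicit infinite rhombic tiling admits a consistent Penrose arrowing is precisely the original difficulty, and again requires de Bruijn's index argument (or an appeal to the substitution structure of Proposition~\ref{defx}) in full. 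So none of the three routes, as written, constitutes a proof.
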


\begin{figure}[h!]
\includegraphics[width=3.9in]{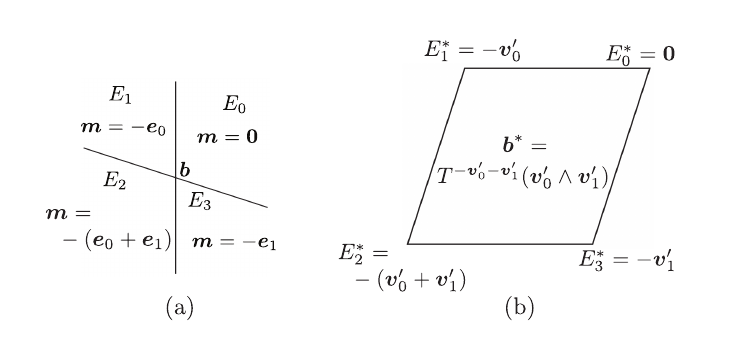}
\caption{
A vertex $\bb\approx\bZr\in v(y)$ (left) that is a 2-fold crossing of a 0-grid line and a 1-grid line. The values 
of $\bm(\bs,\bu)$ on the four 
tiles  $E_0,E_1,E_2$ and $E_3$ adjacent to $\bb$ have values 
$\bm={\bZr}$, $\bm=-\be_1$,   
$\bm=-(\be_0+\be_1)$ and $\bm=-\be_1$. 
Assuming $\{\bu\}=\bZr$, their 
duals (right) are $E_0^*={\bZr}$, 
$E_1^*=-\bv'_0$, 
$E_2^*=-(\bv'_0+\bv'_1)$, 
$E_3^*=-\bv'_1$,
 which are the vertices of a tile $\bb^*$ (right), of type $\bv_0'\wedge\bv_1'$.
\label{duals}}
\end{figure}

Let $\sX_1:=Y^*_1$, called  the \emph{nonsingular} Penrose tilings, and let 
$\varphi: \sX_1\to Y_1$ be the inverse of the duality map. 

\begin{lemma}[See \cite{Robinson1}] \label{closure}
The map
$\varphi:\sX_1\to Y_1$ is uniformly continuous and has a continuous extension
to $\varphi:\sX\to Y$ where  
$\sX:=\overline{\sX_1}$. 
\end{lemma}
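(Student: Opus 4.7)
The plan is to build a continuous extension $\tilde\varphi:\sX\to Y$ via a graph-closure compactness argument, and then to obtain uniform continuity of $\varphi|_{\sX_1}$ as an automatic consequence of the compactness of $\sX$.

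\emph{Step 1: the inverse is continuous.} First I would verify that the inverse duality $\psi:=\varphi^{-1}:Y_1\to\sX_1$, $y(\bu)\mapsto y(\bu)^*$, is continuous. Using the homeomorphism $\bu\leftrightarrow y(\bu)$, this reduces to showing that if $\bu_n\to\bu$ in $y^{-1}(Y_1)\subseteq\T^5_0$, then $y(\bu_n)^*\to y(\bu)^*$ in the tiling metric. On any compact $K\subseteq\R^2$, for $n$ large the grid tilings $y(\bu_n)$ have the same combinatorial structure on $K$ as $y(\bu)$ (nonsingularity is stable under small perturbations on compact sets), and by (\ref{equivariant}) together with the cocycle (\ref{coseq}), the vertex sets of $y(\bu_n)^*$ converge uniformly on $K$ to those of $y(\bu)^*$, which is exactly convergence in $\sX$.

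\emph{Step 2: extension via graph closure.} Form the graph $\Gamma=\{(x,\varphi(x)):x\in\sX_1\}\subseteq\sX\times Y$ and take its closure $\overline\Gamma$ in the compact metric space $\sX\times Y$. Density of $\sX_1$ in $\sX$ implies the first-coordinate projection $\pi_1:\overline\Gamma\to\sX$ is surjective. If I can show it is also injective, then $\pi_1$ is a continuous bijection from a compact space to a Hausdorff space, hence a homeomorphism, so $\overline\Gamma$ is the graph of a continuous $\tilde\varphi:\sX\to Y$ extending $\varphi$. Compactness of $\sX$ then forces $\tilde\varphi$ to be uniformly continuous, yielding both conclusions of the lemma.

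\emph{Step 3 (the main obstacle): injectivity of $\pi_1$.} I must show that if $x_n,x'_n\in\sX_1$ both converge to some $x\in\sX$ while $\varphi(x_n)=y(\bu_n)\to y(\bu)$ and $\varphi(x'_n)=y(\bu'_n)\to y(\bu')$ in $Y$, then $\bu\equiv\bu'\pmod{\Z^5}$. The key point is that $\{\bu_n\}$ is locally encoded in $x_n$ via (\ref{equivariant}): picking any vertex $\bp\in v(x_n)$ near the origin, one has $\bp=\tfrac{2}{5}W^t(\bm+\{\bu_n\})$ where $\bm\in\Z^5$ is reconstructable by walking along edges of $x_n$ (each edge of direction $\bv_j$ changes $\bm$ by $\pm\be_j$). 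Using $\tfrac{2}{5}W^tW=I$ together with vertices in enough independent directions and the torus constraint $\sum_j(\bu_n)_j\in\Z$ defining $\T^5_0$, the vector $\{\bu_n\}$ is pinned down modulo $\Z^5$ from any sufficiently large patch of $x_n$ around $\bZr$. Since $x_n\to x$ and $x'_n\to x$ make the corresponding large patches (and the associated combinatorial walks) agree up to vanishing translations, the two local recoveries have the same limit in $\T^5_0$, so $y(\bu)=y(\bu')$ as required. The real work of the proof lies in making this local recovery of $\bu$ (mod $\Z^5$) from a finite patch both unambiguous and quantitatively stable under the tiling metric on $\sX$; this is what fails at singular $\bu\in\T^5_0$ if one tries to define $\tilde\varphi$ pointwise, but is handled uniformly here by the closure $\overline\Gamma$.
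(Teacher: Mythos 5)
Your Steps 1--2 are correct but they buy essentially nothing: since $\sX$ and $Y$ are compact metric spaces, injectivity of the projection $\pi_1$ on $\overline\Gamma$ is \emph{equivalent} to uniform continuity of $\varphi$ on $\sX_1$ (and Step 1 is never actually used afterwards), so after the graph-closure setup the whole lemma still rests on Step 3. For comparison, the paper does not prove this lemma at all --- it defers to \cite{Robinson1}, remarking only that it is essentially de Bruijn's observation that every Penrose tiling is a limit of nonsingular ones; the substantive argument you are trying to reconstruct is the one carried out in \cite{DB} and \cite{Robinson1}.

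The gap is that the recovery mechanism you propose in Step 3 cannot work, even in principle. First, walking along edges of a patch determines only the \emph{differences} $\Delta\bm$ between vertices, never the absolute $\bm\in\Z^5$ of a base vertex. Second, even granting an absolute $\bm_0$ at a base vertex $\bp_0$, the other vertices contribute no new equations --- they satisfy $\bp-\bp_0=\tfrac25 W^t\Delta\bm$ automatically --- so the only constraint is $\bp_0=\tfrac25W^t(\bm_0+\{\bu\})$: two scalar equations, which determine $\{\bu\}$ only modulo the $3$-dimensional $\ker W^t$, still $2$-dimensional after imposing $\{u_0+\cdots+u_4\}=0$, since $(1,1,1,1,1)\in\ker W^t$. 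Third, and decisively, \emph{no} finite patch pins down $\bu$: for any $R$ and nonsingular $\bu$, every sufficiently small $\bw\in\ker W^t$ with $\sum_j w_j=0$ gives a parameter $\bu+\bw\neq\bu$ whose dual $y(\bu+\bw)^*$ has a patch on $B_R(\bZr)$ \emph{identical} to that of $y(\bu)^*$ --- the local crossing combinatorics persist for small $\bw$, and every dual vertex $\tfrac25W^t(\bm+\{\bu+\bw\})=\tfrac25W^t(\bm+\{\bu\})$ is literally unchanged --- while the two tilings differ far away because duality is injective. So your claim that ``$\{\bu_n\}$ is pinned down modulo $\Z^5$ from any sufficiently large patch'' is false. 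The $\ker W^t$-component of $\bu$ is encoded not in vertex positions but in \emph{which} $\bm\in\Z^5$ occur (de Bruijn's window condition), and a finite patch determines it only up to an error that one must prove tends to $0$, uniformly in $\bu$, as the patch grows. That estimate is the entire content of the lemma, and your closing sentence (``the real work of the proof lies in making this local recovery \dots unambiguous and quantitatively stable'') defers exactly this point; what remains is a reduction framework with the essential step missing.
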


This is essentially
deBruijn's observation \cite{DB} that all Penrose tilings are limits of nonsingular Penrose tilings. The 
extension $\varphi:\sX\to Y$ is the factor map in Theorem~\ref{RPenrose}.
The fact that $\varphi$ has a single preimage on the $G_\delta$ set $Y_1$ shows that $\varphi$ is almost 1:1.

\subsection{The singular cases}\label{singularcases}
 
\begin{figure} [h!]
\includegraphics[width=4.5in]{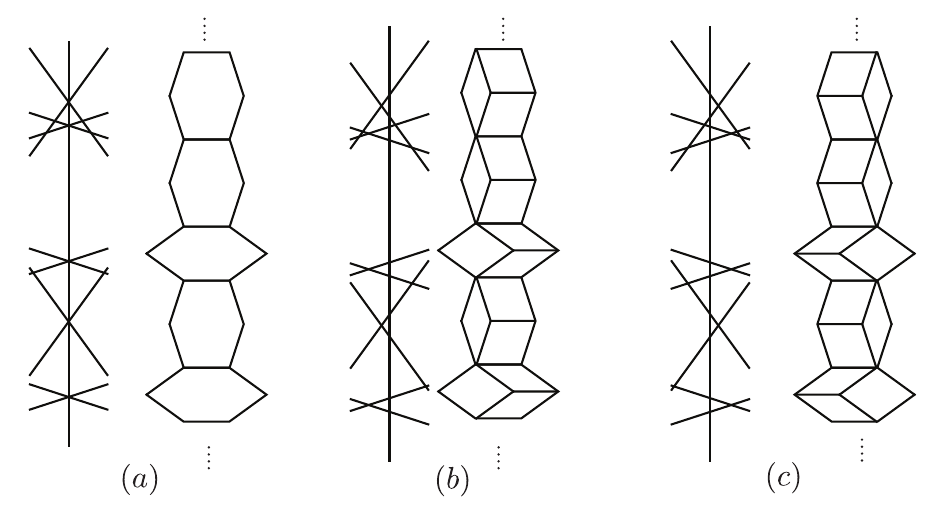}
\caption{(a) The $3$-fold crossings along the \emph{spine} $\ell_0$ for $y(\bu)\in Y_2$
and the unfilled work in its dual $y(\bu)^*$. (b) and (c) show the $+$ and $-$ resolutions 
of the worm.
\label{worm}
} 
\end{figure}
 
If $y(\bu)\notin Y_1$ it means that some of crossings in 
$y(\bu)$ are  not $2$-fold. We call $y(\bu)$ a 
\emph{singular} grid tiling. Let $n$ be the 
highest crossing order in $y(\bu)$. 
It is easy to see that $\bu\in\T^5_0$ implies that $n=4$ never occurs (see  \cite{DB}).
Thus there are two cases: {\bf Case A}, $n=3$, which we will see corresponds to $Y_2$ in Theorem~\ref{RPenrose}, and 
{\bf Case B}, $n=5$, 
which corresponds to $Y_{10}$. 

{\bf Case A:} 
Let  $\bb$ be one of the 3-fold crossings in $y(\bu)$. 
As shown in \cite{DB} (see also \cite{Robinson1}), there is a grid line $\ell_{j,k}$ through 
$\bb$, we call it a \emph{spine}, 
such that all the crossings along $\ell_{j,k}$ are $3$-fold, and all the other crossings 
in $y(\bu)$ are $2$-fold. The $3$-fold crossings along  $\ell_{j,k}$ are of two kinds:  \emph{wide} 
is a crossing of $\ell_j$ with (some) $\ell_{j+2,k'}$ and $\ell_{j-2,k''}$ (where the arithmetic for $j$ is $\!\!\!\mod\!5$), and 
 \emph{narrow} 
is a crossing of $\ell_j$ with $\ell_{j+1,k'}$ and $\ell_{j-1,k''}$, (see Figure~\ref{worm}(a)).

The duals of
$3$-fold crossings are hexagons, respectively \emph{wide} or \emph{narrow}. The hexagons 
line up in $y(\bu)^*$ in configuration called an \emph{unfilled worm}, 
Figure~\ref{worm}(b), and we call the line $l$ along the center of the worm its \emph{axis}.
Since hexagons are not in $p_5$, the dual $y(\bu)^*$ is not in $X_{p_5}$. 
We can obtain a tiling in $X_{p_5}$ by ``filling'' each hexagon in $y(\bu)^*$
by tiling it with three tiles from $p_5$. This can be done in one of  two \emph{directions}: 
direction $+$,  meaning a single tile sits on the side facing $\bv_j$, and  
direction $-$, meaning two tiles sit on the side facing $\bv_j$, (see Figure~\ref{worm}(b) and (c)).

To obtain a Penrose tiling, all the hexagons must be filled in the same direction \cite{DB}.
We denote the two resulting Penrose tilings as $x^+(\bu), x^-(\bu)\in \sX$, Figure~\ref{worm}(b) and (c). 
The column of filled hexagons in $x^+(\bu)$ or $x^-(\bu)$ is called a 
\emph{filled worm}. Going back and forth between 
$x^+(\bu)$ and $x^-(\bu)$ is called \emph{flipping} the worm. Except for the tiles 
in the filled worm, all the tiles in 
$x^+(\bu)$ and $x^-(\bu)$ are the same. The following is the first part of the proof of Theorem~\ref{mainthm}.

 \begin{proposition}\label{firstpart}
If $\cV$ is a direction perpendicular to a $5$th root of unity,  
then the Penrose tiling dynamical system 
 $(X,\R^2,T)$ is not expansive in the direction $\cV$.
 \end{proposition}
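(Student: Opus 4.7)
The plan is to apply Proposition~\ref{exp}: since direction $\cV$ expansiveness is equivalent to direction $\cV$ symbolic expansiveness for FLC tiling systems, it suffices to produce, for every $r > 0$, two \emph{distinct} Penrose tilings $x, x' \in \sX$ satisfying $x[\cV^r] = x'[\cV^r]$. The natural source of such pairs is the Case A singular stratum $Y_2$ described in Section~\ref{singularcases}: it is exactly at points of this stratum that the factor map $\varphi : \sX \to Y$ has fiber of size $2$, produced by the $+/-$ flip of the worm. Since a worm axis is parallel to $\ell_{j,k}$, that is, perpendicular to $\bv_j$, this flipping mechanism is naturally lined up with the directions of interest.

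Given $\cV$ perpendicular to some fifth root $\bv_j$ (and, after relabeling, passing through $\bZr$), my plan is: first choose $\bu_0 \in \T^5_0$ for which $y(\bu_0) \in Y_2$ has its spine among the lines $\ell_{j,k}$. Such $\bu_0$ exists because imposing a single $3$-fold coincidence at some point $\bb$ --- namely that $\ell_{j,k},\ell_{j\pm 1,k'},\ell_{j\mp 1,k''}$ (narrow) or $\ell_{j\pm 2,k'},\ell_{j\mp 2,k''}$ (wide) meet at $\bb$ --- is a single linear condition on $\bu$, and by the structure theorem from \cite{DB} recalled after Figure~\ref{worm}, one such coincidence along a $\bv_j^\perp$ line forces all other crossings on that spine to be $3$-fold and all crossings off the spine to be $2$-fold, placing $y(\bu_0)$ in $Y_2$ with the desired spine direction. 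Next, I would use the equivariance $y(K^\bt \bu_0) = T^\bt y(\bu_0)$ from Lemma~\ref{move} together with the dual equivariance $T^\bt x^\pm(\bu_0) = x^\pm(K^\bt \bu_0)$ from Lemma~\ref{injective} to translate the spine along $\bv_j$ by any desired amount $c$, producing $\bu := K^{c\bv_j}\bu_0$. Taking $|c|$ large enough pushes the spine --- and hence the entire filled worm, which lies in a strip of width at most some universal constant $C$ (bounded by the diameter of the wide hexagon filled by three tiles of $p_5$) around the spine --- completely outside the $r$-neighborhood $\cV^r$ of $\cV$.

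With $\bu$ so chosen, the two Penrose tilings $x^+(\bu), x^-(\bu) \in \sX$ differ only on tiles inside the filled worm, and that worm is disjoint from $\cV^r$. Hence $x^+(\bu)[\cV^r] = x^-(\bu)[\cV^r]$ while $x^+(\bu) \ne x^-(\bu)$, and Proposition~\ref{exp} yields that $\cV$ is not an expansive direction for $T$. The main obstacle --- really a bookkeeping point --- is the existence/translation claim for $\bu_0$: one has to verify both that some Case A point with spine direction $j$ exists in $\T^5_0$ and that translating in the $\bv_j$ direction keeps one inside the Case A stratum with the same spine direction. The first is a direct linear algebra check using $u_0+\dots+u_4 \equiv 0$, and the second follows because the coincidence condition for three lines $\ell_{j,\cdot},\ell_{j\pm1,\cdot},\ell_{j\mp1,\cdot}$ only involves the pairwise differences of their offsets, which are preserved by the translation $K^{c\bv_j}$ since $W(c\bv_j)$ contributes the same amount $c\,\bv_i\cdot\bv_j$ to each of $u_{j-1},u_j,u_{j+1}$ only up to the relevant projection --- a short computation I would carry out explicitly in the full proof.
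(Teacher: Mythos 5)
Your proposal is correct and takes essentially the same approach as the paper: both produce a Case~A singular pair $x^+\ne x^-$ (a worm flip) whose worm axis is parallel to $\cV$ and which lies entirely outside $\cV^r$, so that $x^+[\cV^r]=x^-[\cV^r]$, and then conclude non-expansiveness via Definition~\ref{exp0} and Proposition~\ref{exp}. The only difference is bookkeeping: the paper positions the worm in one line by rotating and translating an arbitrary singular tiling, whereas you construct it on the pentagrid side via a linear coincidence condition on $\bu\in\T^5_0$ and the equivariance lemmas.
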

 
\begin{proof} 
Suppose $\cV=\{\bv_j\}^\perp$.
Let $x^+\in \sX_2$ and  fix $r>0$.
By applying a rotation and a translation we can assume that 
the single worm in $x^+$ is in  a direction $\bv_j^\perp$ that
lies outside $\cV^r$. Let $x^-$ be 
$x^+$ with the 
worm flipped. Then $x^+[\cV^r]=x^-[\cV^r]$ but $x^+\ne x^-$. 
It follows from Definition~\ref{exp0} and Proposition~\ref{exp} that 
$(\sX,\R^2,T)$ is not expansive in the direction $\cV$.
\end{proof}

\begin{figure}[h]
\includegraphics{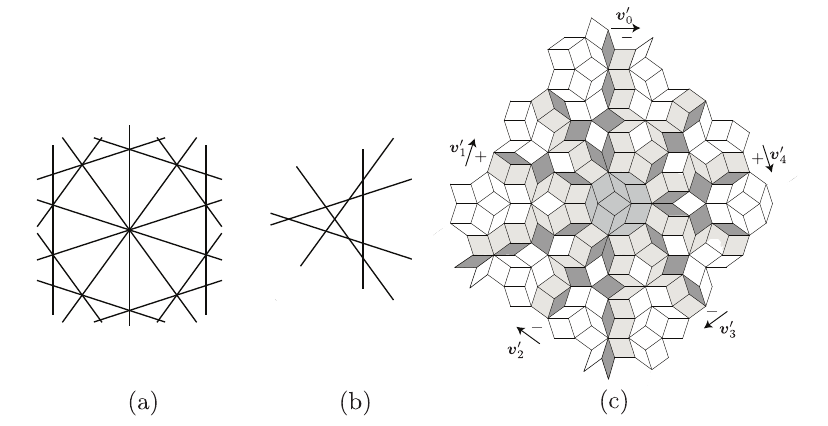}
\caption{(a) A $5$-fold crossing in $y(\bu)\in Y_{10}$ with some surrounding $3$-fold crossings, (b) a small perturbation 
$y(\bu')\in Y_1$ of the 
$5$-fold crossing, and 
(c) the decagon tiling in rotation $0$ together with worm filling directions: $-,+,-,-,+$. 
\label{cartwheel}}
\end{figure}
 
{\bf Case B:} Here,  $y=y(\bu)$ has a single $5$-fold crossing at
$\bb\in \R^2$. By translation, we can assume $\bb=\bZr$ so the \emph{spines} are $\ell_{j,0}$, $j=0,\dots,4$.
All the crossings along these spines, except at $\bZr$, are wide and narrow
$3$-fold, and all the other crossings in 
$y(\bu)$ are $2$-fold 
(see \cite{DB}).
In $y(\bu)^*$  
the dual of the $5$-fold crossing is a regular decagon, and there are unfilled half-worms of hexagons radiating 
from its faces. This configuration is called an 
\emph{unfilled cartwheel}, and we call a direction $\bv_j^\perp$ line $l_j$ a \emph{spoke}
if it goes through the middle of a pair of half-worms.  

Clearly $y(\bu)^*$ is invariant under rotations around $\bZr$ by multiples of $2\pi/10$.
We fill the unfilled cartwheel with $p_5$ tiles to obtain 
a Penrose tiling $x(\bu)^0\in\sX$ as follows. We fill the half-worm pairs, in the order $\bv_0^\perp, \dots, \bv_4^\perp$, 
with a pattern of directions $-,+,-,-,+$. Each counter-clockwise rotation of $x(\bu)^0$ 
by $2\pi/10$ acts on the sign pattern by 
flipping all the directions and cycling the pattern by moving 
the last direction to the front. So, for example, a single rotation of $x(\bu)^0$, which we 
denote $x(\bu)^1$, has 
the sign pattern $-,+,-,+,+$. There are $10$ such patterns which yield the 10 distinct rotations of the 
decagon filling, and yield $10$ distinct Penrose tilings $x(\bu)^0,\dots, x_9(\bu)\in \sX$, 
obtained from filling the unfilled cartwheel in 
$y(\bu)\in Y_{10}$. Off this unfilled cartwheel these are all the same. 
These $10$ Penrose tilings are the 
$10$ distinct $\varphi$ preimages of $y(\bu)\in Y_{10}$, (see \cite{DB}).

\section{Proof of Theorem~\ref{mainthm}}

\subsection{Sturmian sequences}\label{sturmianseq}

Recall that  $\alpha:=\gamma-1\approx 0.6180$. The slope $\alpha$ 
\emph{irrational rotation} is the Kronecker  system $(\T,\Z,R_\alpha)$ defined by
 $R_\alpha u:=u+\alpha\!\!\mod 1$. 
   It satisfies 
  $\Sigma_{R_\alpha}=\Z[\alpha]/\Z\subseteq\T\sim\widehat{\Z}$, where $\Z[\alpha]$ is
   the ring generated by $\alpha$, and $\Z[\alpha]/\Z$ is its projection to the circle.
   A \emph{Sturmian sequence} $z=(z_n)_{n\in\mathbb Z}\in \{0,1\}^{\mathbb Z}$ is a coding of $u\in \T$
by $R_\alpha$. It is defined by
$z_n=z(u)_n:=\chi_I(R^n_\alpha u)$, where 
 $I$ is either $I^+:=[1-\alpha,1)$ or $I^-:=(1-\alpha,1]$. We let $Z=\{(z(u)_n)_{n\in\mathbb Z}:u\in\T\}$, 
 with $\phi:Z\to\T$ being the factor map satisfying $\phi(z(u))=u$.
 The following lemma summarizes some well known facts.

 \begin{lemma}\label{sturml}
 The  $\Z$ Sturmian subshift $(Z,\Z,S)$ {\rm (}not  SFT{\rm )}
 is  strictly ergodic almost automorphic with
 $\Sigma_S =\Z[\alpha]/\Z$, 
 $\mathfrak{M}_T=\{1,2\}$, and  $\T_1:=\T\backslash\{R^n_\alpha 0:n\in\Z\}$.
 \end{lemma}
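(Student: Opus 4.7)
The plan is to exhibit $\phi:Z\to\T$ as an almost $1\!\!:\!\!1$ factor map onto the minimal Kronecker system $(\T,\Z,R_\alpha)$, and then read off each listed property from that structure.

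First I would analyze the two coding maps $z^\pm(u)_n:=\chi_{I^\pm}(R_\alpha^n u)$. Their common boundary $\partial I^\pm=\{0,1-\alpha\}=\{R_\alpha^0 0,R_\alpha^{-1}0\}$ lies on the $R_\alpha$-orbit of $0$. Hence for $u\in\T_1:=\T\setminus\{R_\alpha^n 0:n\in\Z\}$ the orbit of $u$ avoids the boundary and $z^+(u)=z^-(u)=:z(u)$, while for $u=R_\alpha^k 0$ the orbit hits $0$ at time $-k$ and $1-\alpha$ at time $-k-1$, so $z^+(u)$ and $z^-(u)$ differ in exactly those two coordinates. A short one-sided limit argument then identifies the fiber of $\phi$: if $u_m\to u\in\T\setminus\T_1$ with $u_m-u\downarrow 0$, then $z(u_m)\to z^+(u)$; from the other side, $z(u_m)\to z^-(u)$; and for $u\in\T_1$ the limit is just $z(u)$. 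The shift-invariant closure of the image of $z^+$ is therefore $Z=z^+(\T)\cup z^-(\T)$, and the fibers of $\phi$ are singletons on $\T_1$ and two-point sets on its countable complement. This yields $\mathfrak{M}_T=\{1,2\}$ and exhibits $\T_1$ as the set of unique preimages.

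Second, the equivariance $\phi\circ S=R_\alpha^{\pm 1}\circ\phi$ (sign per shift convention) is immediate from the definition, so $\phi$ is a continuous factor map onto a minimal Kronecker system; since $\T_1$ is dense $G_\delta$, $\phi$ is almost $1\!\!:\!\!1$, giving the almost automorphic property. Minimality of $(Z,\Z,S)$ then follows because any closed $S$-invariant $K\subseteq Z$ projects onto $\T$ (by minimality of $R_\alpha$) and therefore contains the unique preimage of every $u\in\T_1$, a dense subset of $Z$. Unique ergodicity follows because any $S$-invariant Borel probability on $Z$ pushes forward to Lebesgue on $\T$ and is determined on the full-measure set $\phi^{-1}(\T_1)$, so it must be the pushforward of Lebesgue via $u\mapsto z^+(u)$.

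Third, the inclusion $\Z[\alpha]/\Z=\Sigma_{R_\alpha}\subseteq\Sigma_S$ is automatic from the factor map. For the reverse inclusion I would invoke the standard fact that an almost $1\!\!:\!\!1$ extension of a minimal Kronecker system has that base as its maximal equicontinuous factor: factoring $\phi=\psi\circ\varphi'$ through the maximal equicontinuous factor $\varphi':Z\to Y'$ forces $\psi:Y'\to\T$ to be a morphism of Kronecker systems whose fibers are cosets of a closed subgroup; the almost $1\!\!:\!\!1$ hypothesis forces one such coset, hence all cosets, to be singletons, so $\psi$ is a conjugacy and $\Sigma_S=\Sigma_{R_\alpha}=\Z[\alpha]/\Z$. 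Finally, $Z$ is not an SFT since a minimal $\Z$ SFT is either a single periodic orbit (impossible since $Z$ is uncountable) or has positive topological entropy (impossible since $Z$ is an almost $1\!\!:\!\!1$ extension of the zero-entropy $(\T,\Z,R_\alpha)$). The main obstacle in filling in details is the fiber analysis in the first step: one must verify that no further accumulation points exist beyond $z^\pm(u)$, which reduces to the facts that $\chi_{I^\pm}$ is continuous off the two boundary points and that the orbits under consideration meet that boundary in at most two coordinates.
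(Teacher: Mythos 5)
The paper itself offers no proof of this lemma: it simply defers to the literature (Arnoux's Sturmian chapter in \cite{Fogg}, and \cite{RobiSturmianentropy} for the formula for $\phi$). So your self-contained argument is necessarily a different route, and its architecture is the standard one and is essentially correct: the two one-sided codings $z^\pm$ with boundary points $0=R_\alpha^0 0$ and $1-\alpha=R_\alpha^{-1}0$, the resulting fiber analysis (singletons over $\T_1$, two-point fibers over the orbit of $0$, differing in exactly two coordinates), minimality via "every closed invariant set contains the dense set $\phi^{-1}(\T_1)$," unique ergodicity via pushing any invariant measure down to Lebesgue and noting the non-injectivity set is countable hence null, and $\Sigma_S=\Z[\alpha]/\Z$ by showing the almost 1:1 factor $\T$ is the maximal equicontinuous factor. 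Two details are presupposed rather than proved: that distinct $u$ have distinct codings (this is where irrationality of $\alpha$ enters and is what makes $\phi$ well defined), and that $\phi$ is continuous; both follow from your fiber analysis together with compactness, and since the paper's Section 5.1 already posits $\phi$ as a factor map, this is acceptable at the lemma's level of rigor.

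The one genuine flaw is the entropy step in your "not SFT" argument. It is false in general that an almost $1\!\!:\!\!1$ extension of a zero-entropy system has zero entropy: Toeplitz subshifts are almost $1\!\!:\!\!1$ extensions of odometers and can have positive topological entropy, so "impossible since $Z$ is an almost $1\!\!:\!\!1$ extension of the zero-entropy $(\T,\Z,R_\alpha)$" is not a valid deduction. The conclusion is easily repaired in two ways. (i) Replace your dichotomy by the stronger standard fact: every nonempty $\Z$ SFT contains a periodic point, so a minimal $\Z$ SFT is a single periodic orbit; since $Z$ factors onto an irrational rotation (equivalently, is infinite), it is not a periodic orbit, and no entropy argument is needed. (ii) If you want the entropy route, use what you have already proved: $Z$ is uniquely ergodic with unique invariant measure $\mu=(z^+)_*\mathrm{Leb}$, and $\phi$ is injective off a $\mu$-null set, so $h_\mu(S)=h_{\mathrm{Leb}}(R_\alpha)=0$; then the variational principle for the uniquely ergodic system $Z$ gives $h_{\mathrm{top}}(S)=h_\mu(S)=0$. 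The point is that zero entropy must be extracted from the measure-theoretic isomorphism (the bad fibers are null for the unique measure), not from the topological almost $1\!\!:\!\!1$ property alone.
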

 
Most of this can be found in \cite{Fogg}, with  explicit formula for $\phi(z)$  in \cite{RobiSturmianentropy}.
We call the tensor square $(Z\otimes Z,\Z^2,S\otimes S)$ of $(Z,\Z,S)$
the $\Z^2$ \emph{Sturmian subshift}.

\begin{lemma}\label{stfact}
The $\Z^2$ Sturmian subshift $(Z\otimes Z,\Z^2,S\otimes S)$
 is  strictly ergodic almost automorphic with 
  $\Sigma_{S\otimes S} =(Z[\alpha]/\Z)\times (Z[\alpha]/\Z)$, $($i.e., an almost $1:1$ etension of $R_\alpha\otimes R_\alpha$$)$, 
 $\mathfrak{M}_T=\{1,2,4\}$, and  
 $\T^2_1:=\T^2\backslash\{(R_\alpha\otimes R_\alpha)^\bn \bZr:\bn\in\Z^2\}$.
 The factor map is $\phi\otimes\phi$.
\end{lemma}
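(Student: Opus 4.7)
The plan is to derive all four assertions from the corresponding one-dimensional facts in Lemma~\ref{sturml}, leveraging that $(Z\otimes Z,\Z^2,S\otimes S)$ is essentially the Cartesian product of $(Z,\Z,S)$ with itself.

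Strict ergodicity is immediate from the last sentence of Section~\ref{subshifts}, which records that the tensor square preserves both minimality and unique ergodicity; combined with Lemma~\ref{sturml}, this yields strict ergodicity of $(Z\otimes Z,\Z^2,S\otimes S)$.

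Next I would identify the Kronecker factor. The map $\phi\otimes\phi:Z\otimes Z\to\T^2$, $(z\otimes z')\mapsto(\phi(z),\phi(z'))$, is a continuous surjection intertwining $S\otimes S$ with $R_\alpha\otimes R_\alpha$, so $(\T^2,\Z^2,R_\alpha\otimes R_\alpha)$ is a Kronecker factor. To compute the full spectrum of $S\otimes S$, one checks both inclusions. For $\supseteq$: if $f_1,f_2$ are continuous eigenfunctions of $S$ with eigenvalues $\chi_1,\chi_2\in\Sigma_S=\Z[\alpha]/\Z$, then $F(z\otimes z'):=f_1(z)f_2(z')$ is a continuous eigenfunction of $S\otimes S$ with joint eigenvalue $(\chi_1,\chi_2)\in\widehat{\Z^2}=\T^2$. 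For $\subseteq$: given a continuous eigenfunction $F$ of $S\otimes S$ with eigenvalue $(\chi_1,\chi_2)$, fix any $z'_0$ with $F(\cdot\otimes z'_0)\not\equiv 0$; then $z\mapsto F(z\otimes z'_0)$ is a continuous eigenfunction of $S$ with eigenvalue $\chi_1$, forcing $\chi_1\in\Z[\alpha]/\Z$, and symmetrically $\chi_2\in\Z[\alpha]/\Z$. Hence $\Sigma_{S\otimes S}=(\Z[\alpha]/\Z)\times(\Z[\alpha]/\Z)$, matching $\Sigma_{R_\alpha\otimes R_\alpha}$, and by Halmos--von Neumann the Kronecker factor is $R_\alpha\otimes R_\alpha$.

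For preimage multiplicities and almost automorphy, note that $(\phi\otimes\phi)^{-1}(u,v)=\phi^{-1}(u)\otimes\phi^{-1}(v)$ has cardinality $\#\phi^{-1}(u)\cdot\#\phi^{-1}(v)$. By Lemma~\ref{sturml} each factor lies in $\{1,2\}$, so the product lies in $\{1,2,4\}$, and all three values are attained (both coordinates in $\T_1$ gives $1$, exactly one gives $2$, and neither gives $4$). Thus $\mathfrak{M}_{S\otimes S}=\{1,2,4\}$. The single-preimage set is $\T_1\times\T_1$, which is dense $G_\delta$ as a product of two dense $G_\delta$ sets and is contained in $\T^2_1$; this shows $\phi\otimes\phi$ is almost $1\!:\!1$, so the system is almost automorphic. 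The only step requiring genuine care is the reverse spectral inclusion, where one must exhibit a slice $z'_0$ making $F(\cdot\otimes z'_0)$ nontrivial; this is automatic whenever $F\not\equiv 0$, by continuity.
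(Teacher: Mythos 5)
Your proof is correct, and it is essentially the argument the paper intends: Lemma~\ref{stfact} is stated there without proof, as an immediate consequence of Lemma~\ref{sturml} together with the tensor-square remarks at the end of Section~\ref{subshifts}, and that reduction to the one-dimensional facts (with the two spectral inclusions and the multiplicative fiber count made explicit) is exactly what you carry out. One point worth recording: the singleton-fiber set is $\T_1\times\T_1$, as you say, and this is \emph{strictly} smaller than the set $\T^2_1=\T^2\backslash\{(R_\alpha\otimes R_\alpha)^{\bn}\bZr:\bn\in\Z^2\}$ named in the lemma, since a point $(u,v)$ with $u$ in the $R_\alpha$-orbit of $0$ but $v$ outside it lies in $\T^2_1$ yet has a fiber of cardinality $2$; so your containment $\T_1\times\T_1\subseteq\T^2_1$ is the most that can be asserted, and the lemma's displayed formula actually describes the complement of the set of $4$-point fibers rather than the set of singleton fibers --- your version is the accurate one.
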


\subsection{Grid patches}\label{gridpatches}
Let 
\begin{align}
{\bbf}_0&:=(0,\sec(2\pi/20))=(0,1+\sqrt{5})\approx (0,3.236),{\rm\  and} \nonumber \\
{\bbf}_1&:=(1,-\tan(2\pi/20))=(1,\sqrt{5+2\sqrt{5}})\approx (-1,3.078), \label{fs}
\end{align}
 and let $A$  be the matrix with 
$\bbf_0$ and $\bbf_1$ as columns. 
We will define a rhombic tile $R$ by 
\begin{equation}\label{rhombs}
R:=\bbf_1\wedge\bbf_0=\{t_1{\bbf}_1+t_0{\bbf}_0:t_0,t_1\in[0,1]\}\subseteq{\R}^2=A[0,1]^2.
\end{equation}
For any $\bu=(u_0,\dots,u_5)\in\T^5_0$, 
the $0$ and $1$-grid lines in $y(\bu)$ 
partition ${\mathbb R}^2$ into a 
tessellation $\{R_\bn\}_{\bn\in\Z^2}$ by congruent rhombs:
\begin{equation}\label{tess}
R_\bn:=T^{-((-u_0+n_0 ) \bbf_0+(-u_1+n_1 ){\bbf}_1)}R=R+A(\bn-\bu'), 
\end{equation}
where in the second equality, we assume without loss of generality that $\bu':=(u_0,u_1)\in[0,1)^2$.

Note that ${\bZr}\in R_{\bZr}$.
We define the \emph{grid patch}
 $g_{\bn}({\bu})$ 
to be the patch in $y({\bu})$ 
supported on the rhombus $R_{\bn}$
(see Figure~\ref{nsingul}(b)).

\begin{definition}\label{sigma}
Let 
$y(\bu)\in Y_1$ be a nonsingular grid tiling (all crossings $2$-fold). 
For ${\bn}=(n_0,n_1)\in\mathbb Z^2$, define  the \emph{symbol of the grid patch} $g_{\bn}({\bu})$, denoted 
by $\sigma(g_{\bn}({\bu}))=z_{n_0}\otimes z'_{n_1}\in\{0\otimes 0,0\otimes 1,1\otimes 0,1\otimes 1\}$
as follows: 
If
a single $4$-grid line passes through $R_{\bn}$,  put $z_{n_0}=0$, but 
if two $4$-grid lines pass through $R_{\bn}$,  put $z_{n_0}=1$.
Similarly, if a single $2$-grid line passes through $R_{\bn}$,  put $z'_{n_1}=0$, but 
if two $2$-grid lines pass through $R_{\bn}$,  put $z'_{n_1}=1$.
\end{definition}

\begin{lemma}\label{read}
Suppose that ${\bu}\in\mathbb T^5_{0,1}$ satisfies  $u_0=u_1=0$. 
Then $z_{0}\otimes z'_{0}=\sigma(g_{\bZr}({\bu}))$
is given by $z_{0}=\chi_{[1-\alpha,1)}(u_4)$ and $z'_{0}=\chi_{[1-\alpha,1)}(u_2)$.
Moreover, $\{\sigma(g_{\bn}({\bu}))\}_{\bn\in\Z^2}=z(u_4)\otimes z(u_2)$.
\end{lemma}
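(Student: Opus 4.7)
The plan is to establish the first claim by a direct geometric count at $\bn=\bZr$, and then derive the tensor-product identity via the translation equivariance from Lemma~\ref{move}.

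\emph{Base case ($\bn=\bZr$).} Because $u_0=u_1=0$, equation (\ref{tess}) collapses to $R_{\bZr}=R$. A $j$-grid line $\ell_{j,k}(u_j)$ meets $R$ precisely when $-u_j+k$ lies in the projection interval
\[
I_j:=\bigl[\min_{\bx\in R}\bv_j\cdot\bx,\ \max_{\bx\in R}\bv_j\cdot\bx\bigr],
\]
so the number of $j$-grid lines through $R$ equals $\#\bigl(\Z\cap(u_j+I_j)\bigr)$. Since the vertices of $R$ are $\bZr,\bbf_0,\bbf_1,\bbf_0+\bbf_1$, the interval $I_j$ is determined by the four dot products $0,\ \bv_j\cdot\bbf_0,\ \bv_j\cdot\bbf_1,\ \bv_j\cdot(\bbf_0+\bbf_1)$. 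Using (\ref{fs}) and the identities $\sec(2\pi/5)=2\gamma$, $2\cos(2\pi/5)=\alpha$, and $\gamma\alpha=1$, I will verify that for both $j=2$ and $j=4$ the two nonzero pairings $\bv_j\cdot\bbf_0,\ \bv_j\cdot\bbf_1$ evaluate, up to sign, to $1$ and $\alpha$. Thus $I_j=[-1,\alpha]$, an interval of length $\gamma=1+\alpha$, and the condition $k\in u_j+I_j$ with $u_j\in[0,1)$ forces $k=0$ always, and forces $k=1$ additionally exactly when $u_j\in[1-\alpha,1)$. By Definition~\ref{sigma} this yields $z_0=\chi_{[1-\alpha,1)}(u_4)$ and $z'_0=\chi_{[1-\alpha,1)}(u_2)$.

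\emph{Tensor product identity.} For general $\bn\in\Z^2$, set $\bt_\bn:=n_0\bbf_0+n_1\bbf_1=A\bn$, so $R_\bn=R+\bt_\bn$. The symbol of a grid patch is translation invariant, so Lemma~\ref{move} gives $\sigma(g_\bn(\bu))=\sigma(g_{\bZr}(K^{\bt_\bn}\bu))$, where $(K^{\bt_\bn}\bu)_j=\{u_j+\bv_j\cdot\bt_\bn\}$. The base-case computation produces, for each of $j=2,4$, one integer pairing (contributing nothing modulo $1$) and one $\pm\alpha$ pairing; the $\alpha$-pairing for $j=4$ is attached to one of the $\bbf_i$ and that for $j=2$ to the other. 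Matching indices accordingly, $(K^{\bt_\bn}\bu)_4=\{u_4+n_0\alpha\}=R_\alpha^{n_0}u_4$ and $(K^{\bt_\bn}\bu)_2=\{u_2+n_1\alpha\}=R_\alpha^{n_1}u_2$. Applying the base case to $K^{\bt_\bn}\bu$ then gives
\[
\sigma(g_\bn(\bu))=\chi_{[1-\alpha,1)}(R_\alpha^{n_0}u_4)\otimes\chi_{[1-\alpha,1)}(R_\alpha^{n_1}u_2)=z(u_4)_{n_0}\otimes z(u_2)_{n_1},
\]
which is the claimed tensor identity.

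\emph{Main obstacle.} All the computational substance is concentrated in the four scalar evaluations $\bv_j\cdot\bbf_i$ of the base case: once each of these is confirmed to lie in $\{\pm 1,\pm\alpha\}$ with the correct matching between $j\in\{2,4\}$ and $i\in\{0,1\}$, both the counting step and the propagation step are pure bookkeeping. The underlying identities are elementary consequences of the minimal polynomial $\alpha^2+\alpha-1=0$ applied to cosines and sines of multiples of $2\pi/5$; the only real care is in tracking signs and in matching the index $j$ to the correct basis vector $\bbf_i$, which is what makes one of $n_0\alpha,n_1\alpha$ appear in $(K^{\bt_\bn}\bu)_4$ and the other in $(K^{\bt_\bn}\bu)_2$.
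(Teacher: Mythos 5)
Your base case is correct, and it is a cleaner packaging of the same computation the paper performs geometrically: the projection of $R$ onto $\bv_j$ for $j\in\{2,4\}$ is the interval $[-1,\alpha]$, and counting integers in $u_j+[-1,\alpha]$ gives $1+\chi_{[1-\alpha,1)}(u_j)$ crossing lines, with the boundary cases excluded by the nonsingularity hypothesis $\bu\in\T^5_{0,1}$. The overall architecture---base case at $\bZr$ plus translation equivariance via Lemma~\ref{move} and the values of $W\bbf_0$, $W\bbf_1$---is also exactly the paper's.

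However, the propagation step has a genuine gap, and as written your argument proves the \emph{transpose} of the lemma. The point you wave through with ``matching indices accordingly'' is precisely the computation that has to be done. The dot products are $W\bbf_0=(0,1,\alpha,-\alpha,-1)$ and $W\bbf_1=(1,0,-1,-\alpha,\alpha)$; in particular $\bv_4\cdot\bbf_0=-1$, $\bv_4\cdot\bbf_1=\alpha$, $\bv_2\cdot\bbf_0=\alpha$, $\bv_2\cdot\bbf_1=-1$. With your choice $\bt_\bn:=n_0\bbf_0+n_1\bbf_1=A\bn$, this yields $(K^{\bt_\bn}\bu)_4=\{u_4-n_0+n_1\alpha\}=R_\alpha^{n_1}u_4$ and $(K^{\bt_\bn}\bu)_2=\{u_2+n_0\alpha-n_1\}=R_\alpha^{n_0}u_2$, the opposite of what you assert; your argument then concludes $\sigma(g_\bn(\bu))=z(u_4)_{n_1}\otimes z(u_2)_{n_0}$, i.e.\ the array $z(u_2)\otimes z(u_4)$ rather than the claimed $z(u_4)\otimes z(u_2)$.

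The source of the trouble is the cell-indexing convention. For Definition~\ref{sigma} to be coherent (the $4$-grid count depending only on $n_0$ and the $2$-grid count only on $n_1$), and consistently with Lemma~\ref{movement}, the cell $R_\bn$ must be the one whose lower left vertex is $\bb_\bn=\ell_{0,n_0}\cap\ell_{1,n_1}$; since $\bv_0\cdot\bbf_1=\bv_1\cdot\bbf_0=1$ while $\bv_0\cdot\bbf_0=\bv_1\cdot\bbf_1=0$, that vertex is $n_0\bbf_1+n_1\bbf_0$, so the correct translation is $\bt_\bn:=n_0\bbf_1+n_1\bbf_0$. (Equation (\ref{tess}) as printed pairs the indices the other way; the paper's own proof of Lemma~\ref{read} translates by $n_0\bbf_1+n_1\bbf_0$, so do not read (\ref{tess}) literally here.) With this $\bt_\bn$ the coordinates $u_0,u_1$ still shift by integers, your base case applies verbatim, and now $(K^{\bt_\bn}\bu)_4=\{u_4+n_0\alpha-n_1\}=R_\alpha^{n_0}u_4$ and $(K^{\bt_\bn}\bu)_2=\{u_2-n_0+n_1\alpha\}=R_\alpha^{n_1}u_2$, which closes the proof. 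The determination of \emph{which} $\bbf_i$ carries the $\alpha$ for $j=4$ versus $j=2$ is exactly what separates the lemma from its transpose, so it cannot be left as unverified bookkeeping.
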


\begin{proof}
First note that $g_{\bZr}({\bu})$ is supported on $R_{\bZr}$.
A calculation (\ref{gli}) 
shows that $\ell_{2,0}(1-\alpha)$  passes through the upper right vertex of $R_{\bZr}$. 
If $z'_0=0$, Definition~\ref{sigma} says that $\ell_{2,0}(u_2)$ is the only $2$-grid line through 
$R_{\bZr}$. Since it enters $R_{\bZr}$ through  the bottom edge (and exits through the top),
it lies to the left of $\ell_{2,0}(1-\alpha)$. Thus $u_2\in[0,1-\alpha)$. 
Likewise, if $z'_{0}=1$, there is a second 2-grid line 
$\ell_{2,-1}(u_2)$ that enters 
$R_{\bZr}$ through the left edge and exits through the top.  
This means $\ell_{2,0}(u_2)$ enters  $R_{\bZr}$ to the right of  $\ell_{2,0}(1-\alpha)$
(and exits through the right edge).
In both cases, we have $z'_0=\chi_{[1-\alpha,1)}(u_2)$.
 The same argument shows $z_0=\chi_{[1-\alpha,1)}(u_4)$. 
 See Figure~\ref{cutlines}.
 
 \begin{figure}[h!]
 \begin{center}
 \includegraphics{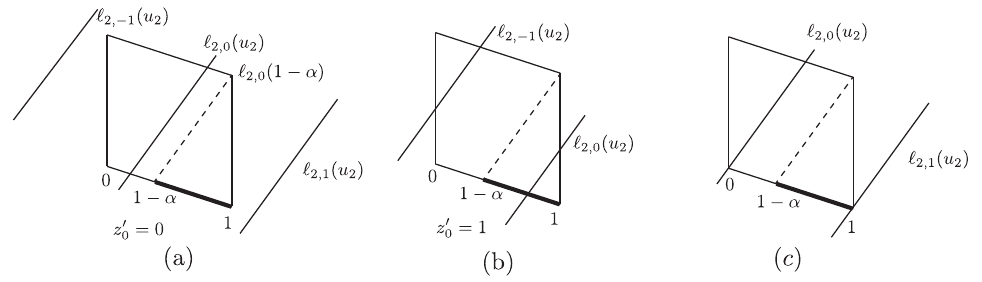}
 \vskip -.15 in 
 \caption{ (a) In the case $z'_0=0$, a single 2-grid  line,  $\ell_{2,0}(u_2)$, 
 crosses $R_{\bZr}$, so that $u_2\in [0,1-\alpha)$ since $\ell_{2,0}(u_2)$ is left of $\ell_{2,0}(1-\alpha)$. (b) 
  In the case $z'_0=1$, two 2-grid lines cross $R_{\bZr}$ and $u_2\in [1-\alpha,1)$. (c) The singular case
  ($u_2=0$). 
  \label{cutlines}}
 \end{center}
 \end{figure}
 
 For the second part, $u_0=u_1=0$ implies 
  $R_\bn=T^{n_0{\bbf}_1+n_1{\bbf}_0}R$, so
$z_{n_0}\otimes z'_{n_1}=\sigma(g_{\bZr}(K^{-\bs_\bn}{\bu}))
=\sigma(g_{\bZr}(\{-W\bs_\bn+\bu\})$. 
By a computation, $W{\bbf}_0=(0,1,\alpha,-\alpha,-1)$ and $W{\bbf}_1=(1,0,-1,-\alpha,\alpha)$, and 
$-W\bs_\bn+\bu=(n_0,n_1,-n_0+n_1\alpha+u_2,-(n_0+n_1)\alpha+u_3,n_0\alpha-n_1+u_4)$, 
since $\bu=(0,0,u_2,u_3,u_4)$.
By the definition of a Sturmian sequence, $z'_{n_1}=\chi_{[1-\alpha,1)}(\{n_1\alpha+u_2\}) 
=\chi_{[1-\alpha,1)}(R_\alpha^{n_1}u_2)={z}(u_2)_{n_1}$. 
The same argument shows $z_{n_0}={z}(u_4)_{n_0}$.
\end{proof}
 
 \subsection{Duals of grid patches}\label{dgp}

\begin{lemma}\label{movement}
Let $x\in \sX_1$ be a nonsingular Penrose tiling with $x=y(\bu)^*$ 
for $y=y(\bu)\in Y_1$. Suppose that $\bb_\bn\in v(y)$ satisfies 
$\bb_\bn=\ell_{0,n_0}\cap\ell_{1,n_1}$, $\bn=(n_0,n_1)$, so 
$\bb_\bn$ is the lower left vertex of the rhombus $R_\bn$ in {\rm (\ref{tess})}. 
Let $E_\bn\in y$ be the tile in $g_\bn(\bu)$ with vertex $\bb_\bn$. 
The dual $D_\bn=\bb_\bn^*\in x$ of $\bb_\bn$ is a type $\bv'_0\wedge\bv'_1$ tile from $p_5$ in $x$, and the 
dual $\bd_\bn=E_\bn^*$ of the tile $E_\bn\in y$ is the upper right vertex of $D_\bn$.
Then $||\bb_\bn-\bd_\bn||<(1/5)(1+\sqrt{5})\approx 0.647$.
\end{lemma}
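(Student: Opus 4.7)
The plan is to first reduce to a canonical configuration using translation equivariance, then compute $\bd_\bn$ explicitly, and finally bound the distance by elementary optimization.

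\textbf{Step 1 (Translation reduction).} I will use the equivariance $(T^\bt y(\bu))^* = T^\bt (y(\bu)^*)$ from Lemma~\ref{injective} to translate by $\bt = \bb_\bn$, so that the translated vertex is $\bZr$. This replaces $\bu$ by $\bu + W\bb_\bn \in \T^5_0$; choosing representatives in $[0,1)^5$, the first two components become $u_0 = u_1 = 0$ (because $\bb_\bn\cdot\bv_j = n_j - u_j$ for $j=0,1$). So we are reduced to $\bu = (0,0,u_2,u_3,u_4)\in\T^5_0$ with $\bb_\bn=\bZr$. Since $y\in Y_1$ is nonsingular, no grid line $\ell_{j,k}$ for $j\in\{2,3,4\}$ passes through $\bZr$, which forces $u_j\in(0,1)$ strictly; combined with $u_2+u_3+u_4\in\Z$ this gives $u_2+u_3+u_4\in\{1,2\}$.

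\textbf{Step 2 (Identify $D_\bn$ and $\bd_\bn$).} With $\bb_\bn=\bZr$ and $(n_0,n_1)=(0,0)$, we have $R_\bn=R=\bbf_1\wedge\bbf_0$, whose interior lies in $\{\bs\cdot\bv_0>0\}\cap\{\bs\cdot\bv_1>0\}$. Hence $E_\bn$ is the tile occupying the ``upper-right quadrant'' of the crossing. For $\bs\in E_\bn$ near $\bZr$, every component $(W\bs+\bu)_j$ lies in $(0,1)$, so $\bm(\bs,\bu)=\bZr$. By (\ref{equivariant}),
\[
\bd_\bn \;=\; E_\bn^* \;=\; \tfrac{2}{5} W^t\bu \;=\; \tfrac{2}{5}\sum_{j=2}^4 u_j \bv_j.
\]
The other three tiles at $\bb_\bn$ have $\bm$-values $-\be_0,\,-\be_1,\,-\be_0-\be_1$, so their duals are $\bd_\bn-\bv'_0,\ \bd_\bn-\bv'_1,\ \bd_\bn-\bv'_0-\bv'_1$. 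These four points form the rhombus $D_\bn$, confirming it is of type $\bv'_0\wedge\bv'_1$; and $\bd_\bn$ is its upper-right vertex (the one in the $+\bv'_0+\bv'_1$ direction).

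\textbf{Step 3 (Bound on $\|\bd_\bn-\bb_\bn\|$).} It remains to show $\|\sum_{j=2}^4 u_j\bv_j\|<\gamma=(1+\sqrt{5})/2$, since multiplying by $2/5$ yields $(1+\sqrt{5})/5$.

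\emph{Case A} ($u_2+u_3+u_4=1$): The triangle inequality gives $\|\sum u_j \bv_j\|\leq\sum u_j = 1 < \gamma$.

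\emph{Case B} ($u_2+u_3+u_4=2$): Set $w_j=u_j/2\in(0,1/2)$, so $\sum w_j=1$, and $(w_2,w_3,w_4)$ lies in the open medial triangle of the standard 2-simplex. I compute $\|\sum w_j\bv_j\|^2$ as a quadratic form with coefficients $\bv_j\cdot\bv_k=\cos((j-k)2\pi/5)$, and check via Lagrange multipliers that there are no interior critical points (the system forces $w_3/w_2<0$, impossible). The same check along each closed edge $w_j=1/2$ of the medial triangle rules out interior critical points there. Hence the maximum of $\|\sum w_j\bv_j\|$ on the closed medial triangle is attained at one of its three vertices $(1/2,1/2,0)$, $(0,1/2,1/2)$, $(1/2,0,1/2)$, with values $\tfrac12\|\bv_i+\bv_j\|=\cos(\vert i-j\vert\pi/5)$: this equals $\gamma/2$ for the adjacent pairs $(2,3),(3,4)$ and $1/(2\gamma)$ for the nonadjacent pair $(2,4)$. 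Doubling, $\sup 2\|\sum w_j\bv_j\|=\gamma$ on the closure. Nonsingularity excludes all three vertices, so on our open region $\|\sum u_j\bv_j\|<\gamma$ strictly. Multiplying by $2/5$ yields $\|\bd_\bn-\bb_\bn\|<(1+\sqrt5)/5$.

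\textbf{Main obstacle.} The most delicate part is Case B of Step 3: identifying the supremum of a quadratic form on the open medial triangle and verifying it is unattained. The bound $(1+\sqrt{5})/5$ is sharp, equal to the long diagonal of a $2\pi/5$ rhombic tile of edge $2/5$, and is approached precisely in the singular configurations (two $u_j$ tending to $1$, the third to $0$) excluded by $x\in\sX_1$; so the strict inequality is tight and cannot be improved.
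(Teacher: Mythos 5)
Your proof is correct, and its skeleton is the same as the paper's: translate $\bb_\bn$ to the origin using Lemma~\ref{injective} so that $u_0=u_1=0$, observe that $\bm(\bs,\bu)=\bZr$ for $\bs\in{\rm int}(E_\bn)$ near the origin, so that (\ref{equivariant}) gives $\bd_\bn=\tfrac{2}{5}W^t\{\bu\}=\tfrac{2}{5}(u_2\bv_2+u_3\bv_3+u_4\bv_4)$, and then bound this vector. Where you genuinely depart from the paper is in the final bound, and your treatment is more rigorous. The paper disposes of that step in one line by asserting $||u_2\bv_2+u_3\bv_3+u_4\bv_4||\le||\bv_2+\bv_3+\bv_4||$; this is true but not obvious (it is not coefficient monotonicity, since $\bv_2\cdot\bv_4=\cos(4\pi/5)<0$; one must in effect check the vertices of the zonotope $\sum_j[0,1]\bv_j$, where the maximum $\gamma$ is attained at $(1,1,0)$, $(0,1,1)$, and $(1,1,1)$), and as written it is non-strict while the lemma claims a strict inequality. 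Your exploitation of the constraint $u_2+u_3+u_4\in\{1,2\}$ (forced by $\bu\in\T^5_0$ together with nonsingularity) and the convex maximization over the medial triangle both justifies the bound and delivers strictness, because every maximizer has some $u_j\in\{0,1\}$, and nonsingularity excludes exactly those configurations. Two smaller points are also in your favor: your sign convention $\bu\mapsto\bu+W\bb_\bn$ (i.e.\ $K^{\bb_\bn}$) is the one consistent with Lemma~\ref{move} and the definition $T^\bt x=\{D-\bt:D\in x\}$ (the paper's $K^{-\bb_\bn}$ is a sign slip), and your Step 2 explicitly verifies that $D_\bn$ is a type $\bv'_0\wedge\bv'_1$ tile with $\bd_\bn$ as its upper right vertex, claims which the paper's proof leaves implicit.
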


\begin{proof}
Use Lemma~\ref{injective} to translate $\bb_\bn$ to the origin, and making it $\bb_{\bf 0}$,
 by replacing $\bu$ with $K^{-\bb_\bn}\bu$. Then $\{\bu\}=(0,0,u_2,\{-(u_2+u_4)\},u_4)$ with $u_2,u_4\in[0,1)$.  
We show  $||\bd_{\bf 0}||<(1/2)(1+\sqrt{5})$. By (\ref{equivariant}),  
$\bd_{\bf 0}=E^*_{\bf 0}:=(2/5) W^t(\bm(\bs,\bu)+\{\bu\})$ for any 
$\bs\in{\rm int}(E_{\bf 0})$. Making $\bs>0$ small, we see 
from (\ref{coseq}) that $\bm(\bs,\bu)={\bZr}$ for $\bs\in{\rm int}(E_{\bf 0})$,
since $\bm$ is constant on tiles. 
Thus $||\bd_{\bf 0}||=(2/5) ||W^t\{\bu\}||\le (2/5)||\bv_2+\bv_3+\bv_4||=(2/5)(1+\sqrt{5})/2=(1/5)(1+\sqrt{5})$.
\end{proof}

Let $x\in\sX$ (or more generally, $x\in X_{p_5}$).
A  $\bv_j^\perp$-\emph{trail} in $x$ is
an infinite sequence of tiles  $\{\dots, D_{-1},D_0,D_1,\dots\}$  in $x$
such that each $D_{i+1}$    is 
 adjacent across to $D_i$  across a type  $\bv'_j$ edge in the direction $R_{2\pi/4}\bv_j$. 
 For $x=y(\bu)^*\in \sX_1$  every  $\bv_j^\perp$-trail
 is the dual of the sequence of $2$-fold crossings along a grid line 
$\ell_{j,k}(u_k)$ in $y(\bu)$ for some $k\in\Z$.
 A  $\bv_j^\perp$-\emph{trail segment} is a contiguous finite subsequence
 $\{D_{k},D_{k+1},\dots,D_{k+n-1}\}$ of a $\bv_j^\perp$-trail.

\begin{lemma}\label{pseudolines}
Fix a Penrose tiling $x\in \sX$ and let $x'$ be the set of all 
type $\bv_0'\wedge \bv_1'$ tiles in $D\in x$.
 Then there is a bijective map $\bn\mapsto D_\bn:\Z^2\to x'$ 
such that for each $\bn\in\Z^2$, 
a $\bv_0^\perp$-trail segment connects $D_\bn$ to $D_{\bn+\be_1}$
and a $\bv_1^\perp$-trail segment connects $D_\bn$ to $D_{\bn+\be_0}$.
  The map $\bn\mapsto D_\bn$ is unique for $x\in \sX_1$ if we assume $D_{\bf 0}=\bb_{\bf 0}^*$, 
  or for $x\in \sX\backslash \sX_1$  if we assume  $D_{\bf 0}=(\bb'_{\bf 0})^*$ 
 with $x'\in\sX_1$ and $\rho(x,x')$ sufficiently small.  
 \end{lemma}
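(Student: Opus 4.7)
The argument splits into the nonsingular case, treated by the grid duality, and the singular case, handled by approximation. For $x = y(\bu)^* \in \sX_1$, the $0$- and $1$-grid lines in $y(\bu)$ form two families of parallel lines in distinct directions, so for each $\bn = (n_0, n_1) \in \Z^2$ the intersection $\bb_\bn := \ell_{0,n_0} \cap \ell_{1,n_1}$ is a unique point, and nonsingularity makes it a $2$-fold crossing. Setting $D_\bn := \bb_\bn^*$ produces, by Lemma~\ref{movement} and the duality, a type $\bv_0' \wedge \bv_1'$ tile of $x$, and every such tile of $x$ is dual to exactly one such crossing. This yields the bijection $\bn \mapsto D_\bn$, pinned down uniquely by the normalization $D_{\bf 0} = \bb_{\bf 0}^*$, which fixes the labeling of the grid lines.

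To verify the trail conditions, I would follow $\ell_{1,n_1}$ from $\bb_\bn$ to $\bb_{\bn+\be_0}$. The segment crosses intervening $2$-fold crossings with grid lines of slopes $j \in \{2,3,4\}$, and the duals of consecutive crossings along $\ell_{1,n_1}$ are adjacent tiles sharing edges dual to segments of $\ell_{1,n_1}$. Since such segments have direction $\bv_1^\perp$, the dual edges are of type $\bv_1'$, and consecutive tiles meet across these edges in the perpendicular direction $R_{2\pi/4}\bv_1 = \bv_1^\perp$, which is exactly a $\bv_1^\perp$-trail segment. Symmetrically, following $\ell_{0,n_0}$ produces a $\bv_0^\perp$-trail segment from $D_\bn$ to $D_{\bn+\be_1}$.

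For the singular case $x \in \sX \backslash \sX_1$, I would approximate $x$ by a sequence $x^{(k)} = y(\bu^{(k)})^* \in \sX_1$ with $\rho(x^{(k)}, x) \to 0$, chosen so that $(\bb_{\bf 0}^{(k)})^*$ converges to the prescribed $D_{\bf 0}$ (available since $\sX_1$ is dense in $\sX$ and $D_{\bf 0}$ is specified precisely by such an approximating tile). The nonsingular construction yields indexings $\bn \mapsto D_\bn^{(k)}$. For each fixed $\bn$, the finite trail sequence connecting $D_{\bf 0}^{(k)}$ to $D_\bn^{(k)}$ is determined by the local pattern of $x^{(k)}$ on a bounded region, and since $x^{(k)}$ and $x$ agree on arbitrarily large balls (up to vanishing translation) for large $k$, the tile $D_\bn^{(k)}$ stabilizes to a tile $D_\bn \in x'$, with the trail conditions passing to the limit.

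The main obstacle is bijectivity in the singular case, particularly when trail segments traverse the unfilled-worm or cartwheel regions from Section~\ref{singularcases}. The key point is that a trail is defined purely by tile adjacencies of the fixed $x$ itself, not by any choice of worm filling, and this is exactly what the metric approximation captures. Surjectivity then follows because any type $\bv_0' \wedge \bv_1'$ tile in $x'$ lies in a bounded patch matching $D_\bn^{(k)}$ for some $\bn$ and all sufficiently large $k$; injectivity follows because $D_\bn = D_{\bn'}$ with $\bn \ne \bn'$ would propagate back to $D_\bn^{(k)} = D_{\bn'}^{(k)}$ for large $k$, contradicting the bijectivity already established in the nonsingular case.
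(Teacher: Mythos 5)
Your proposal is correct and takes essentially the same route as the paper: in the nonsingular case the indexing is exactly the dual map $\bn\mapsto\bb_\bn^*$ of Lemma~\ref{movement}, and in the singular case the paper likewise takes a sequence $x_k\in\sX_1$ with $x_k\to x$, uses exact agreement of $x_k$ with $x$ on large balls around the origin to set $D_\bn:=D^k_\bn$ for $k$ large relative to $||\bn||$, and lets the trail and bijectivity properties pass to the limit. The additional detail you supply (checking the trail conditions via the duality $\ell^*\perp\ell$, and the surjectivity/injectivity propagation argument) is left implicit in the paper's much terser proof but is fully consistent with it.
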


 \begin{proof} 
If $x\in \sX_1$ is nonsingular, the map $\bn\mapsto D_\bn$ defined in Lemma~\ref{movement} satisfies the lemma.
If $x\in \sX\backslash \sX_1$ is singular, there is a sequence $x_k\in \sX_1$ so that $x_k\to x$  in the metric 
$\rho$, (i.e., $\sX=\overline{\sX_1}$, Lemma~\ref{closure}). Let $x_k=y(\bu_k)^*$ and let  
$\bn\mapsto D^k_\bn$ be the map defined in Lemma~\ref{movement}. It 
satisfies $D^k_\bn=\bb^k_\bn$ for $\bb^k_\bn\in v(y(\bu_k))$. 
Let $r_n>0$ be an increasing sequence, and for each $n$, choose $K_n$ large enough that $k\ge K_n$ implies 
$x_k[B_{r_n}({\bf 0})]=x[B_{r_n}({\bf 0})]$. 
Define $D_\bn:=D^k_\bn$ 
where $r_n>||\bn||$ and $k\ge K_n$.
\end{proof}

\begin{definition}
For  a Penrose tiling $x\in \sX$ and $\bn\in \Z^2$, we define a patch 
$w_\bn(x)$ in $x$, called a \emph{Wang patch}, as follows: 
Starting at $D_\bn$, go forward along the  $\bv^\perp_0$-trail segment
to $D_{\bn+\be_0}$ (Lemma~\ref{pseudolines}), then
go forward along the  $\bv^\perp_1$-trail segment 
to $D_{\bn+\be_0+\be_1}$, then go backward along 
the  $\bv^\perp_0$-trail segment
to $D_{\bn+\be_1}$, and finally go backward along 
the  $\bv^\perp_1$-trail segment
 back  to $D_\bn$. The patch  
$w_\bn(x)$ is defined to be this loop of closed 
trail segments,
together with all the tiles inside it.
\end{definition}

The Wang patches $\{w_\bn(x)\}_{\bn\in \Z^2}$ in $x\in\sX$
cover $\R^2$ in the sense that $x=\bigcup_{\bn\in\Z^2}w_\bn(x)$.
We call the four type $\bv_0'\wedge \bv_1'$ tiles in $w_\bn(x)$
its corners. The following is a consequence of Lemma~\ref{pseudolines}.

 \begin{lemma}\label{nns}
 For $x\in \sX_1$, let $x=y(\bu)^*$ where $y(\bu)\in Y_1$.
 Then each Wang patch  in $x$ is the dual of the corresponding grid patch: $w_\bn(x)=g_\bn(\bu)^*$. 
 \end{lemma}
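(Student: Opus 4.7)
The plan is to match $w_\bn(x)$ and $g_\bn(\bu)^*$ by exploiting that, under geometric duality, traversing a grid line in $y(\bu)$ becomes traversing a trail in $x$, while the four grid-line edges of the rhombus $R_\bn$ become the four boundary trail segments of the Wang patch.

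First I would verify that the indexing $\bn\mapsto D_\bn$ from Lemma~\ref{pseudolines} agrees with $\bn\mapsto \bb_\bn^*$ from Lemma~\ref{movement} for every $\bn$, not only at $\bn=\bZr$. For $y(\bu)\in Y_1$, every vertex $\bb_\bn=\ell_{0,n_0}\cap\ell_{1,n_1}$ is a $2$-fold crossing dual to a $\bv'_0\wedge\bv'_1$ tile in $x$, and adjacent vertices of $y(\bu)$ along a common grid line dualize to tiles of $x$ joined by an edge perpendicular to that grid line. Thus $\{\bb_\bn^*:\bn\in\Z^2\}$ forms a $\Z^2$-indexed network of $\bv'_0\wedge\bv'_1$ tiles with the correct trail adjacencies required in Lemma~\ref{pseudolines}, and the uniqueness clause there (with $D_{\bZr}=\bb_{\bZr}^*$) forces $D_\bn=\bb_\bn^*$ everywhere.

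Next I would identify the boundary trails. The four edges of $R_\bn$ are grid-line segments: the edge from $\bb_\bn$ to $\bb_{\bn+\be_0}$ lies on $\ell_{1,n_1}$, the edge from $\bb_{\bn+\be_0}$ to $\bb_{\bn+\be_0+\be_1}$ lies on $\ell_{0,n_0+1}$, and similarly for the opposite sides. Along the segment on $\ell_{1,n_1}$, one passes through finitely many $2$-fold crossings with other grid lines; their duals are rhombic tiles in $x$, and each intermediate segment of $\ell_{1,n_1}$ dualizes (by $\ell^*\perp\ell$) to an edge parallel to $\bv_1$, i.e.\ a $\bv'_1$-edge. Hence this sequence of duals is exactly the trail segment joining $D_\bn$ to $D_{\bn+\be_0}$ specified in the construction of $w_\bn(x)$. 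The same argument applied to the other three edges of $R_\bn$ identifies the remaining three trail segments. The closed loop of boundary trail segments of $w_\bn(x)$ is therefore the geometric dual of the boundary loop $\partial R_\bn$.

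Finally I would conclude the tile contents match. Since $y(\bu)\in Y_1$, every element of $v(y(\bu))\cap R_\bn$ is a $2$-fold crossing, and each dualizes to a rhombic tile of $x$ lying on or inside the dualized boundary loop; conversely every tile inside that loop is the dual of exactly one crossing in $v(y(\bu))\cap R_\bn$. Because $g_\bn(\bu)=y(\bu)[R_\bn]$ is the smallest patch whose union contains $R_\bn$, its vertex set is precisely $v(y(\bu))\cap R_\bn$, whose duals form the tile set of $g_\bn(\bu)^*$. Comparing with the tile set of $w_\bn(x)$ (the interior plus boundary of the trail loop) gives $w_\bn(x)=g_\bn(\bu)^*$. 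The main obstacle is the bookkeeping in step two: aligning the direction labels $\bv_0^\perp$ and $\bv_1^\perp$ of the trail segments with the grid lines $\ell_{1,\cdot}$ and $\ell_{0,\cdot}$ bounding $R_\bn$, and confirming that the four trail segments close up into a loop congruent to the dual of $\partial R_\bn$ rather than, say, winding around a neighboring rhombus; both points follow from the bijectivity in Lemma~\ref{pseudolines} together with Lemma~\ref{movement}.
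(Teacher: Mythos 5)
Your proposal is correct and takes essentially the approach the paper intends: the paper states this lemma with no written proof beyond the remark that it is ``a consequence of Lemma~\ref{pseudolines}'', and your argument --- pinning down $D_\bn=\bb_\bn^*$ via the uniqueness clause of Lemma~\ref{pseudolines}, dualizing the four grid-line edges of $R_\bn$ to the four boundary trail segments (using $\ell^*\perp\ell$), and matching the crossings in $v(y(\bu))\cap R_\bn$ with the tiles on or inside the loop --- is exactly the fleshed-out version of that remark, using the same ingredients (Lemmas~\ref{movement} and \ref{pseudolines} and the trail/grid-line duality). The bookkeeping point you flag is real but harmless: the paper itself swaps the labels $\bv_0^\perp$ and $\bv_1^\perp$ between Lemma~\ref{pseudolines} and the Wang-patch definition, and your identification (crossings along $\ell_{1,n_1}$ dualize to the trail from $D_\bn$ to $D_{\bn+\be_0}$) is the consistent one.
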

 
\begin{figure}[h!]
\includegraphics[width=5.1in]{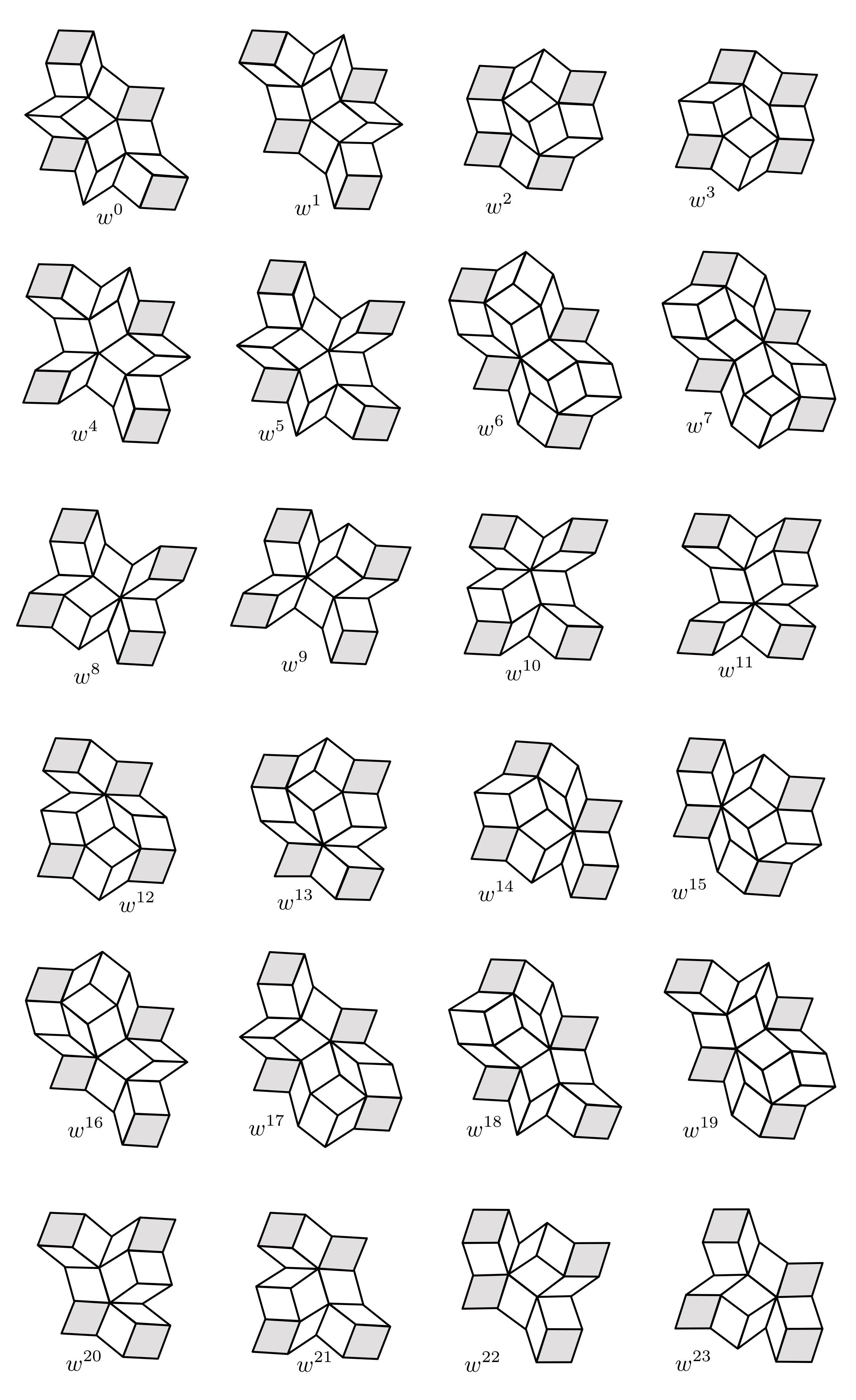}
\caption{The $24$ Wang patches.  
\label{wangpatches}} 
\end{figure} 

\begin{proposition}[Raphael Robinson, see Exercise 11.1.2 in \cite{DB}]\label{rr24}
Up to translation, there are $24$ Wang (proto-) patches among all the Penrose tilings $x\in\sX$. 
\end{proposition}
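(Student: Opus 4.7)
The plan is to reduce the count of Wang patches to the count of grid patches, and then enumerate grid patches by analyzing the arrangement of $2$-, $3$-, and $4$-grid lines that cross the fundamental rhombus $R_{\bZr}$. The first step uses Lemma~\ref{nns}: for any nonsingular $x=y(\bu)^\ast\in \sX_1$ and any $\bn\in\Z^2$, the Wang patch $w_\bn(x)$ is exactly the dual $g_\bn(\bu)^\ast$. For singular $x\in\sX\setminus\sX_1$, approximate $x$ by a sequence $x_k=y(\bu_k)^\ast\in\sX_1$ (using $\sX=\overline{\sX_1}$ from Lemma~\ref{closure}) and use the construction of the map $\bn\mapsto D_\bn$ in Lemma~\ref{pseudolines}: each Wang patch $w_\bn(x)$ eventually agrees with $w_\bn(x_k)=g_\bn(\bu_k)^\ast$ for large $k$, so every Wang patch is (up to translation) the dual of some grid patch. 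Conversely, every grid patch is realized in some $y(\bu)\in Y_1$. Thus the set of Wang patches up to translation is in bijection with the set of grid patches up to translation.

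Next I normalize the grid patch. Using the equivariance in Lemma~\ref{injective}, I translate so that the lower-left vertex of $R_\bn$ lies at $\bZr$; this amounts to taking $\bn=\bZr$ and $u_0=u_1=0$, so $\bu=(0,0,u_2,u_3,u_4)$ with $u_2+u_3+u_4\in\Z$, leaving two free parameters, say $(u_2,u_4)\in[0,1)^2$ with $u_3=\{-(u_2+u_4)\}$. The rhombus $R=R_{\bZr}$, with sides $\bbf_0,\bbf_1$ from (\ref{fs}), is cut in its interior by the $2$-, $3$-, $4$-grid lines whose positions are determined by $(u_2,u_3,u_4)$. An explicit computation of the widths $\max_{\bs\in R}(\bv_j\cdot \bs)-\min_{\bs\in R}(\bv_j\cdot \bs)$ for $j=2,3,4$ shows each width lies strictly between $1$ and $2$, so the number of grid lines of each type that cut $R$ is always either $1$ or $2$, independent of $\bu$. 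This is what makes the symbols $z_{n_0},z'_{n_1}\in\{0,1\}$ of Definition~\ref{sigma} well-defined (and gives $8$ possible triples of counts).

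Finally I enumerate. For each fixed triple of counts (how many $2$-, $3$-, $4$-lines intersect $R$), the combinatorial type of the arrangement inside $R$ can still vary: it changes precisely when a grid line crosses a vertex of $R$ or when two grid lines of different types cross each other at a common point inside $R$. These ``critical'' events sweep out finitely many arcs and isolated points in the $(u_2,u_4)$-square, and the complement decomposes into finitely many open cells on each of which $g_{\bZr}(\bu)$ is constant up to translation. I propose to draw this subdivision explicitly (using Lemma~\ref{read}, which already identifies the Sturmian dichotomies for the $2$- and $4$-directions as $u_2,u_4$ crossing $1-\alpha$), determine the position of each $3$-line via $u_3=\{-(u_2+u_4)\}$, and tabulate the $24$ resulting combinatorial types. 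Their duals are then matched one-to-one with the $24$ patches in Figure~\ref{wangpatches}. The main obstacle will be the bookkeeping: correctly listing the critical loci in the $(u_2,u_4)$-square, showing that every cell yields a genuinely distinct (translation-inequivalent) grid patch, and ruling out any omitted or duplicated case, in particular handling the boundary cells where a line does pass through a vertex or two lines coincide (the singular subset of parameters), whose Wang patches turn out to be translates of ones already counted.
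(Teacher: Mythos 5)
Your strategy is the same as the paper's: use Lemma~\ref{nns} (together with the approximation argument behind Lemma~\ref{pseudolines}) to identify Wang patches with duals of nonsingular grid patches, normalize by the equivariance of Lemma~\ref{injective} to $g_\bZr(\bu)$ with $\bu=(0,0,u_2,u_3,u_4)$, and count the cells of a subdivision of the $(u_2,u_4)$-square on which the combinatorial type of $g_\bZr(\bu)$ is constant. This is precisely the paper's bifurcation diagram (Figure~\ref{markov}). One imprecision at the start: the bijection is with \emph{nonsingular} grid patches (all crossings $2$-fold); singular grid patches dualize to configurations containing hexagons or decagons, not to Wang patches, which is why the count must be taken over the interiors of the cells.

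The genuine gap is in your final paragraph, in two respects. First, your criterion for when the combinatorial type changes is wrong as stated: ``two grid lines of different types cross each other at a common point inside $R$'' is the \emph{generic} situation, not a codimension-one event, so it defines no loci in the $(u_2,u_4)$-square. The true critical events are: (a) a $2$-, $3$- or $4$-grid line passing through a \emph{vertex} of $R_\bZr$ (the crossing types $012$, $013$, $014$ of the paper's Steps 1--3); (b) a crossing of two of the $2$-/$3$-/$4$-lines lying exactly on an \emph{edge} of $R_\bZr$ (types $0ij$ and $1ij$, $i<j$ in $\{2,3,4\}$, Step 5); and (c) a triple crossing of a $2$-, a $3$- and a $4$-line in the interior of $R_\bZr$ (type $234$): the dual hexagon flips across such a point, so the Wang patch changes even though no crossing enters or leaves $R_\bZr$. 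In the Penrose geometry the loci of type (c) happen to lie on two of the type-$013$ lines, but that coincidence is something to be proved (the paper's Step 4), not a consequence of (a) and (b). Second, everything you defer as ``bookkeeping'' is in fact the entire mathematical content of the proof: locating the loci (a)--(c) exactly --- the paper handles (b) by a non-obvious device, pushing each potential edge crossing through an affine map $A_{i,j,k,k'}$ so that its critical locus becomes the intersection of $[0,1]^2$ with the parallelogram $A_{i,j,k,k'}(\partial R_\bZr)$ (Figure~\ref{types}) --- then verifying that these loci cut the square into exactly $24$ cells and that the $24$ duals are pairwise translation-inequivalent. Until that enumeration is carried out, no count (and certainly not the number $24$) follows from the outline.
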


We denote
these Wang (proto-) patches by $\{w^a:a\in\cA\}$ where $\cA=\{0,\dots,23\}$.
These $24$ Wang patches are shown in Figure~\ref{wangpatches}. Pictures of these patches appeared in the 
first author's dissertation \cite{Jang}, and also in \cite{mann}.

\begin{figure}[h!]
\includegraphics[width=8cm]{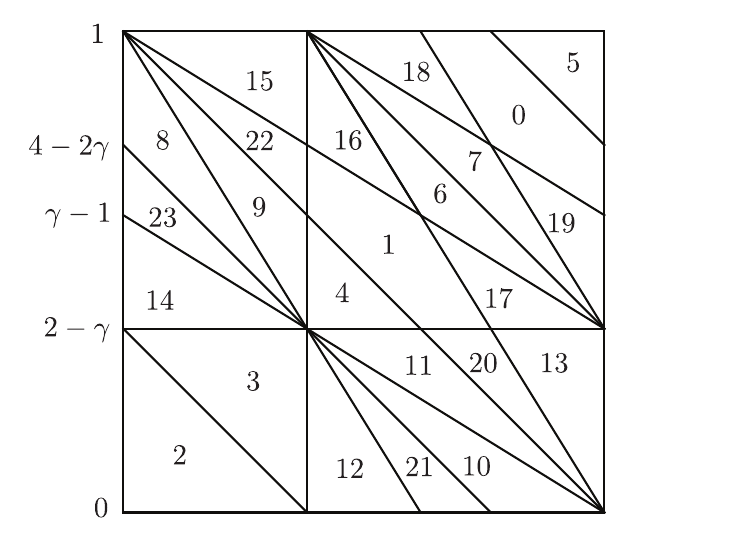}
\caption{
The bifurcation diagram,  showing the parameter values $(u_2,u_4)\in\T^2\sim [0,1]^2$ 
corresponding the 24 Wang patches. Each cell is labeled 
with $a\in\cA$ for the corresponding Wang patch $w^a$ (Figure~\ref{wangpatches}). 
The vertices along each edges of the 
square are at $0$,
$2-\gamma\approx 0.381967$, $\gamma-1\approx 0.618034$, 
$4-2\gamma\approx 0.763932$, and $1$.
The coarser division of $[0,1]^2$ into four cells 
by the lines $u_2=2-\gamma$ and $u_4=2-\gamma$ corresponds to the values 
$0\otimes 0, 0\otimes 1, 1\otimes 0, 1\otimes 1$  of the 
symbol $\sigma(y(\bu))$ where $\bu=(0,0,u_2,u_3,u_4)$ with $u_2+u_3+u_4=0$, 
corresponding to the $\Z^2$ Sturmian shift $S\otimes S$. 
\label{markov}} 
\end{figure} 

\begin{proof}[Proof of Proposition~\ref{rr24}]
By Lemma~\ref{nns}, all the Wang patches $w^a$, up to translation, can be obtained as duals 
$g_\bn(\bu)^*$ of all the grid patches $g_\bn(\bu)$ that occur in all $y(\bu)\in Y_1$.
This means 
all crossings in $y(\bu)$ are 2-fold, and in particular, this holds for any grid patch in 
$g_\bn(\bu)$ in $y(\bu)$.  We will call a 
grid patch that has only 2-fold crossings a \emph{nonsingular grid patch}.
Now, if $g_\bn(\bu)$ is a nonsingular grid patch in $y(\bu)\in Y$, (i.e., $y(\bu)$ is not 
necessarily nonsingular), then since $Y_1$ is dense $G_\delta$,
there exists a small perturbation of $\bu$, which does not change 
$g_\bn(\bu)$ enough to change its dual
$g_\bn(\bu)^*$, so that with this new $\bu$, we have $y(\bu)\in Y_1$.

Given any grid patch $g_\bn(\bu)$ in $y(\bu)\in Y$ we can 
translate it to the origin, using Lemma~\ref{injective} to adjust $\bu$,
so that  for this new $\bu$, the patch $g_\bn(\bu)$ is translated to 
the origin and becomes $g_\bZr(\bu)$, with its support satisfying
$R_\bZr=R$, where $R$ is as in (\ref{rhombs}).
The new value of $\bu$ thus has the form $\bu=(0,0,u_2,u_3,u_4)$, where 
we  assume without loss of generality 
that $(u_2,u_3)\in [0,1]^2$ with $u_2+u_3+u_4=0$. Thus we  think of  $g_\bZr(\bu)$ as 
depending on $(u_2,u_4)\in [0,1]^2$,  but  we also observe that
$g_\bZr(\bu)$ is the same for $(u_2,0)$ and $(u_2,1)$, and 
for $(0, u_4)$ and $(1, u_4)$, so we can also regard $(u_2,u_4)\in\T^2$.

Figure~\ref{markov} shows a partition of  $[0,1]^2$ 
into $24$ cells, $46$ edges, and $23$ vertices. 
We call this the \emph{bifurcation diagram} for Wang patches, noting that  
the same diagram, 
called a \emph{Markov partition} for Wang tiles, 
was obtained in \cite{mann}. 
The cells in the bifurcation diagram are defined by the condition that a point
 $(u_2,u_4)$ is in the interior of a cell if and only if the corresponding grid patch  
$g_{\bZr}(\bu)$ is nonsingular. 
It follows that
the dual $g_{\bZr}(\bu)^*$ is a constant Wang patch $w^a$ 
for all  $(u_2,u_4)$ in the interior of each cell.
By the discussion in the previous two paragraphs, 
the duals $g_{\bZr}(\bu)^*$ range over all possible Wang patches $w^a$ up to translation.
We can see each of these Wang patches by   
drawing the dual for the grid patch $g_\bZr(\bu_0)$  corresponding to a single 
point $\bu_0$ in each cell. 
We find that each cell results in a distinct Wang patch.
Thus there are  
24 distinct Wang patches, which 
are the Wang patches $w^a$, $a\in\cA$, shown in 
Figure~\ref{wangpatches}. The numbering $a\in \cA$ corresponds 
to the labels of the cells in Figure~\ref{markov}.  
In particular, these are the 24 Wang patches  described in \cite{GS}.

It remains to show that the bifurcation diagram is correct. 
Figure~\ref{markov} 
shows  five vertices along each side of $[0,1]^2$ located
at $\eta_0=0$, $\eta_1:=2-\gamma\approx 0.381967$, $\eta_2:=\gamma-1\approx 0.618034$, 
$\eta_3:=4-2\gamma=3-\sqrt{5}\approx 0.763932$, and $\eta_4=1$.
We will now locate all the possible multiple crossings in $g_\bZr(\bu)$. 
The remainder of the proof is organized in five steps.

\smallskip

\noindent {\it Step} 1. It is easy to see  that there are $5$-fold crossings $g_\bZr(\bu)$ exactly when $(u_2,u_4)$ 
in $[0,1]^2$
is one of the nine vertices 
corresponding to the partition of $[0,1]^2$ 
into four cells by the vertical line $u_2=\eta_1$ and the horizontal line $u_4=\eta_1$,
(in $\T^2$ these identify to four vertices). 
In subsequent discussions 
we implicitly ignore these nine points, and look only for $3$-fold crossings
in $g_\bZr(\bu)$.

Altogether, there are ten possible types of $3$-fold crossings, which 
we denote $012$, (meaning a crossing of a $0$-, $1$- and a 
$2$-grid line), $013$, $014$, $023$, $024$, $034$
$123$, $124$, $134$, and $234$. 

\smallskip

\noindent {\it Step} 2. It is easy to see that if $(u_2,u_4)$ lies along the 
left or right side 
of $[0,1]^2$ (on the line $u_2=0$ or $u_2=1$) then there are $2$-grid lines through 
the lower left and lower right corners of $R_\bZr$ in $g_\bZr(\bu)$, as shown in Figure~\ref{cutlines}(c). 
Similarly, for points along the 
vertical line $u_2=\eta_1$, there are $2$-grid lines through 
the upper left and upper right corners of $R_\bZr$. These account for all 
possible $3$-fold crossings type of $012$ in $g_\bZr(\bu)$. In a similar way, for 
$(u_2,u_4)$ 
along any of the three horizontal lines $u_2=0$, $u_2=\eta_1$ or $u_2=1$,  
there are   
$3$-fold crossings of type $014$ at corners of $R_\bZr$ in $g_\bZr(\bu)$,  
and there can be no other crossings of this type.

\smallskip

\noindent{\it Step} 3. For all $(u_2,u_4)$ along the five lines of slope $-1$ in the bifurcation diagram, 
(i.e., the lines $u_2+u_4=\eta_1$, $u_2+u_4=\eta_2$, $u_2+u_4=1$, $u_2+u_4=2\eta_2$ and $u_2+u_4=2\eta_3$)
there  are $3$-fold crossings type $013$ at corners of  $R_\bZr$. For two of these, 
$u_2+u_4=\eta_1$ and $u_2+u_4=2\eta_2$, the $3$-grid lines cross the upper left and lower right corners.
For the other three, the $3$-grid lines cross the lower left and upper right corners of $R_\bZr$.
No other crossings of  type $013$ are possible. 

\smallskip

\noindent{\it Step} 4. 
We show that for $(u_2,u_4)\in[0,1]^2$, all possible type $234$ crossings occur  
along (two of the) slope $-1$ lines from Step 3:  $u_2+u_4=\eta_1$ and $u_2+u_4=2\eta_2$. 
By Section~\ref{singularcases}, Case A, 
all $3$-fold crossings in $y(\bu)$ occur along spines with both wide and 
 narrow $3$-fold crossings (Figure~\ref{worm}(a)). In Step 3 above, the spine is a $3$-grid line.  
The type $013$ in Step 3 are wide crossings, so the narrow crossings are type $234$. These occur 
in $g_\bZr(\bu)$ in the cases where the $3$-grid line goes through the lower left and upper right corners of $R_\bZr$: 
$u_2+u_4=\eta_1$ and $u_2+u_4=2\eta_2$.

\smallskip

\noindent{\it Step} 5. Finally, we discuss the six types of $3$-fold crossings that
have the form $0ij$ or $1ij$ for $i,j=2,3,4$, $i< j$.  Any such crossing
occurs along an edge of $R_\bZr$.
By (\ref{gli}), a grid line $\ell_{i,k}(u_i)$ has the equation $\bv_i\cdot(x,y)=(-u_i+k)$.
So if $\ell_{i,k}(u_i)\cap\ell_{j,k'}(u_j)=\{(x,y)\}$ then $V_{i,j}(x,y)=(-u_i+k,-u_j+k')$, where $V_{i,j}$ 
is the $2\times 2$ matrix with rows $\bv_i$ and $\bv_j$.
Recalling that we identify $(x,y)$ with 
$[x,y]^t$, we define an affine map $A_{i,j,k,k'}(x,y):=M_{i,j}V_{i,j}(x,y)-(k,k')$, where
\begin{equation}\label{matrices}
M_{2,4}:=\left(\begin{matrix}1&0\\0&1\end{matrix}\right), \ \ \ M_{2,3}:=\left(\begin{matrix}1&0\\-1&-1\end{matrix}\right),  \ \ \ M_{3,4}:=\left(\begin{matrix}-1&-1\\0&1\end{matrix}\right).
\end{equation}
Then $A_{i,j,k,k'}$ maps a crossing $(x,y)$ of 
$\ell_{i,k}(u_i)$ and $\ell_{j,k'}(u_j)$
($i,j=2,3,4$, $i< j$)
to a point $(u_2,u_4)$ in the plane 
of the bifurcation diagram. The matrices $M_{i,j}$ 
correspond to the fact that $u_2+u_3+u_4=0$.  
When the map $A_{i,j,k,k'}$ is applied to the boundary 
$\partial R_\bZr$ of $R_\bZr$ the image is a parallelogram
in the $(u_2,u_4)$-plane whose intersections 
with $[0,1]^2$ are the values of $(u_2,u_4)$ 
in the bifurcation diagram that correspond
to $3$-fold crossings of type $0ij$ or $1ij$ in $g_\bZr(\bu)$. 

For each type $ij\in\{24,23,34\}$, Figure~\ref{types} shows  $A_{i,j,k,k'}(\partial R_\bZr)$ 
for each of the values of $(k,k')$ so that $A_{i,j,k,k'}(\partial R_\bZr)\cap[0,1]^2\ne\emptyset$.
The caption describes which sides correspond to type $0ij$ and $1ij$ and which $(k,k')$ are used. 

We conclude by observing that
steps 1 - 5 account for all possible $3$-fold and $5$-fold crossings 
in $g_\bZr(\bu)$, and also account for all the segments in Figure~\ref{markov}.. This shows that
the bifurcation diagram is correct.
\end{proof}

\begin{figure}[h!]
\includegraphics{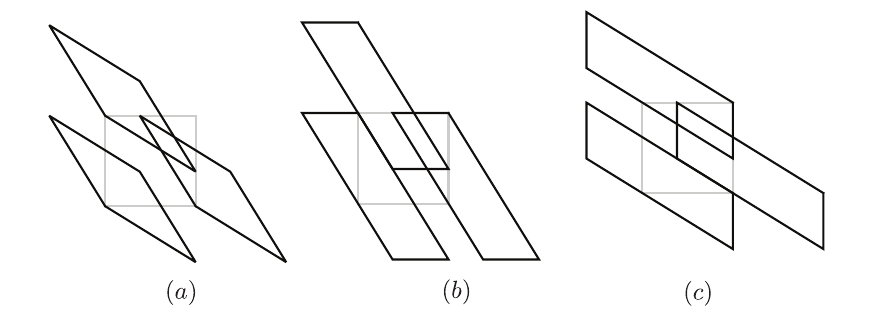}
\caption{
Images of $\partial R_\bZr$ under $A_{i,j,k,k'}$ in the $(u_2,u_4)$-plane are parallelograms (black). 
Only those choices of $k,k'$ resulting in images that meet $[0,1]^2$ (grey)
are shown. 
(a) Images under $A_{2,4,0,0}$, $A_{2,4,0,1}$, and $A_{2,4,1,0}$.
The slope  $-\gamma$  sides correspond to type $024$ and 
slope $-1/\gamma$ sides to type $124$. 
(b) Images under $A_{2,3,0,-1}$, $A_{2,3,-1,-1}$, and $A_{2,3,0,-2}$.
The slope $-\gamma$ sides correspond to type $123$, and slope $0$ sides (including 
part of $\partial [0,1]^2$)  correspond to type $023$. 
(c)  Images under $A_{3,4,-2,0}$, $A_{3,4,-1,0}$, and $A_{3,4,-1,-1}$. 
The slope $-1/\gamma$ sides correspond to type $034$ and vertical sides (including 
part of $\partial [0,1]^2$)  correspond to type $134$. 
\label{types}} 
\end{figure} 

\begin{definition}
Fix $x\in\sX$. For each $n_1\in \Z$ we define the $n_1$th \emph{bent $0$-grid line} in $x$, 
denoted $\lambda_{0,n_1}$, to be the piecewise linear curve connecting the points
$\{\bd_{(n_0,n_1)}\}_{n_0\in\Z}$ (where the points $\bd_\bn$  are as defined in Lemma~\ref{movement}).
Similarly, for each $n_0\in \Z$, we define the $n_0$th 
\emph{bent $1$-grid line} in $x$, denoted
$\lambda_{1,n_0}$, to be the piecewise linear curve connecting the vertices
$\{\bd_{(n_0,n_1)}\}_{n_1\in\Z}$.
\end{definition}

The next lemma shows that bent $0$- and $1$-grid lines $\lambda_{1,n_0}$ and $\lambda_{n_1,0}$ 
closely follow the 
corresponding grid lines $\ell_{1,n_0}$ and $\ell_{n_1,0}$.

\begin{lemma}\label{follow}
Suppose $x\in \sX_1$ and $x=y(\bu)^*$ for $y(\bu)\in Y_1$. Then 
$||\ell_{0,n_1}-\lambda_{0,n_1}||, ||\ell_{1,n_0}-\lambda_{1,n_0}||<2/3$
(in the sense of the Euclidean 
uniform closeness of the two curves).
The same results hold for singular Penrose tilings $x\in \sX\backslash \sX_1$, 
where $y(\bu)=\varphi(x)$.
\end{lemma}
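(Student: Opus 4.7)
The plan is to reduce the uniform closeness of the bent grid line $\lambda_{0,n_1}$ to the straight grid line $\ell_{0,n_1}$ to the pointwise bound $\|\bb_\bn-\bd_\bn\|<(1+\sqrt{5})/5$ supplied by Lemma~\ref{movement}, by exploiting the fact that, with one index fixed, every lattice vertex in the corresponding family lies on a single common grid line.

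I would treat the nonsingular case $x\in\sX_1$ first, writing $x=y(\bu)^*$ for $y(\bu)\in Y_1$. For each fixed $n_1$, as $n_0$ ranges over $\Z$ the vertices $\bb_{(n_0,n_1)}$ all lie on the common grid line $\ell_{0,n_1}$, so Lemma~\ref{movement} places every $\bd_{(n_0,n_1)}$ within Euclidean distance $(1+\sqrt{5})/5$ of $\ell_{0,n_1}$. Since perpendicular distance to a line is a convex function of the point, each straight segment of the piecewise linear curve $\lambda_{0,n_1}$---its two endpoints lying within $(1+\sqrt{5})/5$ of $\ell_{0,n_1}$---remains within that same distance throughout. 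A short numerical check gives $(1+\sqrt{5})/5<2/3$ (equivalently $\sqrt{5}<7/3$), yielding $\|\ell_{0,n_1}-\lambda_{0,n_1}\|<2/3$. The analogous argument with the roles of the two indices interchanged proves the bound for $\lambda_{1,n_0}$ and $\ell_{1,n_0}$.

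For the singular case $x\in\sX\setminus\sX_1$, the extended duality of Lemma~\ref{closure} gives $y(\bu)=\varphi(x)\in Y$. The construction in Lemma~\ref{pseudolines} already presents each $D_\bn$ (and hence $\bd_\bn$) as the stabilized limit of the corresponding tile in nonsingular approximations $x_k=y(\bu_k)^*\in\sX_1$ with $x_k\to x$; the convergence $\bu_k\to\bu$ simultaneously produces the relevant vertices $\bb_\bn$ and grid lines $\ell_{0,n_1}$, $\ell_{1,n_0}$ as limits of the corresponding objects for $y(\bu_k)$. Applying the nonsingular bound $<2/3$ to each $x_k$ and passing to the limit delivers the same bound in the singular case.

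I expect the main obstacle to be essentially bookkeeping, namely the verification that, in the singular case, the approximation preserves the Wang patch combinatorics uniformly on bounded regions so that the bound can be pushed to the limit. The key geometric input---that a discrete family of points lying on a common line pins the piecewise linear curve through those points close to that line via convexity---is elementary, and the numerical inequality $(1+\sqrt{5})/5<2/3$ coming from Lemma~\ref{movement} is exactly what is needed to hit the stated constant.
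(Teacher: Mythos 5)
Your proposal is correct and takes essentially the same approach as the paper: the nonsingular case is the paper's direct appeal to Lemma~\ref{movement} (you simply make explicit the interpolation/convexity step and the numerical check $(1+\sqrt{5})/5<2/3$ that the paper leaves implicit), and the singular case is the paper's approximation by nonsingular tilings $x_k=y(\bu_k)^*\to x$ from Lemma~\ref{closure}, with your stabilization-plus-continuity argument standing in for the paper's choice of approximants whose $0$- and $1$-grid lines agree exactly with those of $y(\bu)$. Since $(1+\sqrt{5})/5<2/3$ holds with strict slack, passing the pointwise bound through the limit still yields the strict inequality, so the limiting step is sound.
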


\begin{proof}
For $x\in \sX_1$, this follows from Lemma~\ref{movement}. For 
$x\in \sX\backslash \sX_1$, we let $x_k=y(\bu_k)^*\to x$, where $y(\bu_k)\in Y_1$
(see Lemma~\ref{closure})
can be chosen so that $\ell_{0,n_1}(u_0)=\ell_{0,n_1}(u^k_0)$
and $\ell_{0,n_1}(u_0)=\ell_{0,n_1}(u^k_0)$ for all $||\bn||$ small. 
\end{proof}

The bent $0$- and $1$-grid lines  
 $\{\lambda_{1,n_0}(x)\}_{n_0\in\Z}$ and $\{\lambda_{0,n_1}(x)\}_{n_1\in\Z}$
partition $\R^2$ into a tiling by tetragons (i.e., quadrilaterals):

\begin{definition}\label{tetragons} 
For $x\in\sX$ and $\bn=(n_0,n_1)\in\Z^2$, let  $U_\bn(x)$ be the tile bounded by the tetragon 
with vertices $\bd_\bn$, $\bd_{\bn+\be_0}$, $\bd_{\bn+\be_0+\be_1}$,  and $\bd_{\bn+\be_1}$,
(the edges of $U_\bn(x)$ are segments of the edge curves $\lambda_{1,n_0}$, $\lambda_{1,n_0+1}$,
$\lambda_{0,n_1}$ and $\lambda_{0,n_1+1}$ in $x$). 
We call $U_\bn(x)$ an \emph{unmarked tetragon Penrose tile},
and call $u(x):=\{U_\bn(x)\}_{\bn\in\Z^2}$ an \emph{unmarked tetragon Penrose tiling}.
\end{definition}

There is a unique 
tetragon tile $U_\bn(x)$ in $u(x)$ corresponding to each Wang patch $w_\bn(x)$ in $x$ (see 
Figure~\ref{tetragon}).
In Section~\ref{pwp} we will show that there are, up to translation, $11$ different tetragon prototiles
$\{U^b:b=0,\dots,10\}$, so some of the tetragon tiles 
correspond to more than one 
Wang patche $w^a$.  
 We will return to these tilings in Section~\ref{pwp}.

\begin{figure}[h!]
\begin{center}
\includegraphics{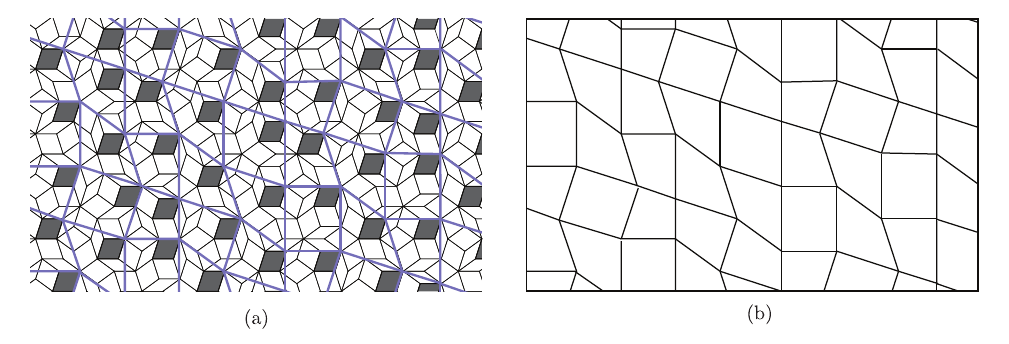}
\caption{Swatch of Penrose tiling $x\in\sX$ (a) with type 
$\bv_0'\wedge\bv'_1$ tiles $D_\bn$ shaded
to show corners of Wang patches $w^\bn(x)$.
(b) Edge curves connect the upper right vertices $\bd_\bn$ of $D_\bn$ delineating
unmarked tetragon Wang tiles $U_\bn$.  
\label{tetragon}}
\end{center}
\end{figure}

\begin{definition}\label{sigma1}
For each  Wang patch $w^a$, $a\in \cA$, 
we define  its \emph{symbol}
$\sigma(w^j)=z_0\otimes z'_0\in\{0\otimes 0,0\otimes 1,1\otimes 0,1\otimes 1\}$
as follows: If a single direction $\bv_2^\perp$
trail segment passes through $w^j$, we put $z'_0=0$, but 
if two distinct direction $\bv_2^\perp$
trail segments pass through $w^j$, we put $z'_0=1$.
Similarly, if 
a single direction $\bv_4^\perp$ 
trail segment passes through $w^j$, we put $z_0=0$, but 
if two distinct direction $\bv_4^\perp$
trail segments pass through $w^j$, we put $z_0=1$,
(compare Definition~\ref{sigma}).
\end{definition}

\begin{lemma}\label{readsturmian}
Let $x\in \sX$ be a Penrose tiling covered by Wang patches $\{w_\bn(x)\}_{\bn\in\Z^2}$. 
Then $z\otimes z'=\{\sigma(w_{\bn}(x))\}_{\bn\in \Z^2}$
is a $\Z^2$ Sturmian sequence. 
Let $u_2=\phi(z)$, $u_4=\phi(z')$,   and let $\bu_0=(0,0,u_2,\{-(u_2+u_4)\},u_4)$.  If $x\in \sX_1$ then 
$y(\bu_0)\in Y_1$ and $x=T^{\bt_0} y(\bu_0)^*$ for some unique  $\bt_0\in\R^2$ with $||\bt_0||<4/3$.
If $x\in \sX\backslash \sX_1$ then $y(\bu_0)^*$ has an unfilled worm or unfilled cartwheel, and 
there is a unique  $\bt_0\in\R^2$ with $||\bt_0||<4/3$ so that 
$x$ is $T^{\bt_0} y(\bu)^*$ with the worm or cartwheel filled.
\end{lemma}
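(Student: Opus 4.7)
The plan is to handle the nonsingular case $x\in \sX_1$ first, using the duality $w_\bn(x)=g_\bn(\bu)^*$ from Lemma~\ref{nns} together with the Sturmian coding of grid patches from Lemma~\ref{read}, and then extend to singular $x\in \sX\setminus \sX_1$ by a limiting argument based on $\sX=\overline{\sX_1}$ (Lemma~\ref{closure}).

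In the nonsingular case, Lemma~\ref{injective} gives a unique $\bu\in\T^5_0$ with $x=y(\bu)^*$. Using Lemma~\ref{move}, translate to obtain $\bu=K^{\bt_0}\bu_0$ for some $\bt_0\in\R^2$ and some $\bu_0=(0,0,u'_2,u'_3,u'_4)$ with $u'_2+u'_3+u'_4\equiv 0\pmod 1$; then $x=T^{\bt_0}y(\bu_0)^*$, and since multi-crossing orders are translation invariant, $y(\bu_0)\in Y_1$. The translation $\bt_0$ admits a geometric description as the displacement between the corresponding $\bv'_0\wedge\bv'_1$ corner tiles $D_\bZr$ in $x$ and in $y(\bu_0)^*$; both of these are duals of the grid vertex $\bb_\bZr$ which equals $\bZr$ in $y(\bu_0)$, so Lemma~\ref{movement} bounds each of them within $(1/5)(1+\sqrt{5})$ of the origin, giving $\|\bt_0\|<(2/5)(1+\sqrt{5})<4/3$. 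Now Lemma~\ref{nns} yields $w_\bn(y(\bu_0)^*)=g_\bn(\bu_0)^*$ for every $\bn$; comparing Definitions~\ref{sigma} and~\ref{sigma1}, the symbols agree under duality, since a $\bv_j^\perp$-trail segment through a Wang patch is dual to a $j$-grid line through the corresponding rhombus. Translation by $\bt_0$ rigidly reindexes but does not change symbols, so $\{\sigma(w_\bn(x))\}_{\bn\in\Z^2}=\{\sigma(g_\bn(\bu_0))\}_{\bn\in\Z^2}$, which Lemma~\ref{read} identifies as a $\Z^2$ Sturmian sequence. Applying $\phi$ (Lemma~\ref{sturml}) to the two factors identifies $(u'_2,u'_4)$ with the parameters $(u_2,u_4)$ of the lemma statement, so that $\bu_0$ in Step~1 coincides with the $\bu_0$ of the lemma.

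For the singular case, use Lemma~\ref{closure} to choose $x_k=y(\bu_k)^*\in \sX_1$ with $x_k\to x$. The Wang-patch construction in Lemma~\ref{pseudolines} is purely local, so on any fixed finite window the Wang patches of $x_k$ eventually equal those of $x$, and hence $\{\sigma(w_\bn(x))\}$ is a pointwise limit of $\Z^2$ Sturmian sequences. Since $Z\otimes Z$ is closed (Lemma~\ref{stfact}), the limit is itself a $\Z^2$ Sturmian sequence, and $\phi$ reads off $u_2,u_4$. With $\bu_0=(0,0,u_2,u_3,u_4)$, the point $(u_2,u_4)$ now lies on the boundary of a cell of the bifurcation diagram (Figure~\ref{markov}), so $y(\bu_0)$ has a $3$-fold or $5$-fold crossing at the location corresponding to the singular region of $x$; thus $y(\bu_0)^*$ has an unfilled worm (Case~A of Section~\ref{singularcases}) or unfilled cartwheel (Case~B). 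The Wang patches of $x$ lying in and adjacent to the singular region uniquely select the filling direction -- a single $\pm$ in Case~A, or one of the ten cartwheel rotations in Case~B -- yielding $x=T^{\bt_0}(\text{filled }y(\bu_0)^*)$, with the bound $\|\bt_0\|<4/3$ surviving by continuity from the nonsingular estimate.

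The main obstacle is the singular case: verifying that the local Wang-patch data around a worm or cartwheel in $x$ uniquely determines the filling direction, and in particular breaking the tenfold ambiguity of Case~B by matching the cartwheel's cyclic sign pattern $-,+,-,-,+$ (and its nine rotations) against the possible configurations of Wang patches $w^a$ radiating from the decagon. This is a finite combinatorial check via Figure~\ref{wangpatches}, but it demands a careful case-by-case analysis together with uniform control of the approximating sequence so that the continuity argument for the bound on $\|\bt_0\|$ is legitimate.
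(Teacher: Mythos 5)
Your overall architecture is essentially the paper's: Wang-patch symbols equal grid-patch symbols under duality (Definitions~\ref{sigma} and~\ref{sigma1} via Lemma~\ref{nns}), Lemma~\ref{read} makes the symbol array $\Z^2$ Sturmian, $\phi$ recovers the parameters $(u_2,u_4)$, and the singular case is handled by approximation using $\sX=\overline{\sX_1}$. However, your justification of the bound $||\bt_0||<4/3$ contains a genuine error.

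You assert that the corner tiles $D_\bZr$ of $x$ and of $y(\bu_0)^*$ are ``both duals of the grid vertex $\bb_\bZr$ which equals $\bZr$ in $y(\bu_0)$, so Lemma~\ref{movement} bounds each of them within $(1/5)(1+\sqrt{5})$ of the origin.'' Lemma~\ref{movement} bounds $||\bb_\bn-\bd_\bn||$, i.e.\ it places the dual vertex $\bd_\bn$ near the grid vertex $\bb_\bn$ of \emph{its own} grid tiling, not near the origin. This does give $||\bd_\bZr(y(\bu_0)^*)||<(1/5)(1+\sqrt{5})$, since $\bb_\bZr(y(\bu_0))=\bZr$. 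But the corner tile of $x=y(\bu)^*$ is dual to $\bb_\bZr(y(\bu))$, the lower-left vertex of $R_\bZr(\bu)=R+A(\bZr-\bu')$, which sits at $-A\bu'$ with $\bu'\in[0,1)^2$, hence can lie as far as $||\bbf_0+\bbf_1||\approx 1.236$ from the origin; in fact $\bb_\bZr(y(\bu))=-\bt_0$ exactly, so claiming that $\bd_\bZr(x)$ is within $0.647$ of the origin presupposes that $\bt_0$ is already small --- the argument is circular. As written, your estimate yields only $||\bt_0||\lesssim 2(1/5)(1+\sqrt{5})+||\bbf_0+\bbf_1||\approx 2.5$, which does not prove the stated bound. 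The correct (and simpler) argument is the paper's: $\bt_0$ is precisely the offset of the rhombus lattice --- the origin lies in $R_\bZr(\bu)$ while the lower-left vertex of $R_\bZr(\bu_0)$ \emph{is} the origin --- so $||\bt_0||\le||\bbf_0+\bbf_1||<4/3$, with no appeal to Lemma~\ref{movement}. The same correction should replace your ``bound surviving by continuity'' in the singular case, where in fact $\bt_{0,n}$ is eventually constant along the approximating sequence. Finally, the cartwheel case needs none of the case-by-case matching of sign patterns against Figure~\ref{wangpatches} that you flag as the main obstacle: since $T^{\bt_0}y(\bu_{0,n})^*\to x$ and these tilings agree with $T^{\bt_0}y(\bu_0)^*$ off the singular region, $x$ is automatically \emph{some} filling of the unfilled worm or cartwheel, which is all the lemma asserts.
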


\begin{proof}
First let $x=y(\bu)^*\in \sX_1$, so that $y(\bu)\in Y_1$. By Definitions~\ref{sigma}~and~\ref{sigma1},  
 $\sigma(g_{\bn}(\bu)^*)=\sigma(g_{\bn}(\bu))$ for all $\bn\in\Z^2$. Thus $\{\sigma(w_\bn(x))\}_{\bn\in\Z^2}$
 is a $\Z^2$ Sturmian sequence since $\{\sigma(g_\bn(\bu))\}_{\bn\in \Z^2}$ is $\Z^2$ Sturmian
by  Lemma~\ref{read}. For $\bu_0$ as described 
in the lemma, $y(\bu_0)$ and $y(\bu)$ are translates since they read the 
same Sturmian sequence.
Thus there is a unique $\bt_0$ so that 
$y(\bu)=T^{\bt_0}y(\bu_0)$. 
For $y(\bu)$ we have $\bZr\in R_\bZr$, whereas  for $y(\bu)^*$ the lower left vertex of $R_\bZr$
is $\bZr$. Thus $||\bt_0||\le ||\bbf_0+\bbf_1||<4/3$, and (\ref{equi}) implies $x=T^{\bt_0} y(\bu_0)^*$.

If $x\in\sX\backslash\sX_1$ let $x_n\to x$ in $\sX$. Let $(z\otimes z')_n$ be the  
$\Z^2$ Sturmian sequence for $x_n$, and let $\bu_{0,n}$ be the value of $\bu_0$ (Lemma~\ref{read})
for $(z\otimes z')_n$. Then the definition of the tiling metric $\rho$ 
provides that $x_n\to x$ implies $(z\otimes z')_n$ converges to 
a $\Z^2$ Sturmian sequence $z\otimes z'$, which is the sequence read from $x$,
(i.e., $z_{n_0}\otimes z'_{n_1}=\sigma(w_{(n_0,n_1)}(x))$. 
If $\bu=y^{-1}(\varphi(x))$ then since $x\in\sX\backslash\sX_1$, 
$y(\bu)^*$ has either an unfilled worm or an unfilled cartwheel.
But also $y(\bu)=T^{\bt_{0,n}}y(\bu_{0,n})$  for some $\bt_{0,n}$ with $||\bt_{0,n}||<4/3$, and 
in fact $\bt_{0,n}=\bt_0$ for all sufficently large $n$.
So  $y(\bu)=T^{\bt_0}y(\bu_0)$, but also 
$T^{\bt_n}y(\bu_{0,n})^*=T^{\bt_0}y(\bu_{0,n})^*\to x$.
Thus $x$ is obtained from $y(\bu)^*$ by filling in a worm or cartwheel.
\end{proof}

 \subsection{The proof of Theorem~\ref{mainthm}}

The next two propositions, together with Proposition~\ref{firstpart}, when used in conjunction with  Proposition~\ref{exp},
complete the proof of Theorem~\ref{mainthm}.

\begin{proposition}\label{secondpart}
Let $x\in \sX_1$ be a nonsingular Penrose tiling and let $\cV\subseteq\R^2$ be a direction in $\R^2$ 
not perpendicular to either $\bv_0$ or $\bv_1$. Then there is an $r_1>0$ (independent of $x$ and $\cV$) so that 
if $x[\cV^{r_1}]=x'[\cV^{r_1}]$ for any (not necessarily nonsingular) Penrose tiling $x'\in \sX$, then $x=x'$.
\end{proposition}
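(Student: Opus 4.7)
The plan is to read the $\Z^2$ Sturmian sequence of Wang patch symbols inside the strip $\cV^{r_1}$, extend it to all of $\Z^2$ via Lemma~\ref{recover}, and then invoke Lemma~\ref{readsturmian} to recover the full tiling. By Proposition~\ref{rr24} the $24$ Wang patches have a uniform diameter bound $d$, so I take $r_1:=d+1$, a constant independent of $x$ and $\cV$. The key geometric input will be that $\cV$ crosses every bent grid line $\lambda_{0,n_1}$ and $\lambda_{1,n_0}$ in $x$: since $\cV$ is not perpendicular to $\bv_0$ or $\bv_1$, it crosses each straight grid line $\ell_{0,n_1}$ and $\ell_{1,n_0}$ transversally at exactly one point, and by the $2/3$ closeness from Lemma~\ref{follow} a topological intermediate-value argument then forces $\cV$ to cross each corresponding bent curve as well (the bent curve is bi-infinite and stays within a uniformly bounded strip around the straight one, so far from the straight crossing $\cV$ is on opposite sides of the bent curve).

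Sweeping $\cV$ through $\R^2$ thus lets us pass through Wang patches $w_\bn(x)$ whose indices $\bn=(n_0,n_1)$ take every value in each coordinate of $\Z$, because each bent 1-grid line crossing forces a change in the $n_0$ column and each bent 0-grid line crossing forces a change in the $n_1$ column. Since every such Wang patch has diameter at most $d$ and contains a point of $\cV$, it lies entirely inside $\cV^{r_1}$. Hence the index set $J:=\{\bn:w_\bn(x)\cap\cV\ne\emptyset\}$ satisfies $\pi_0(J)=\pi_1(J)=\Z$. Now suppose $x'[\cV^{r_1}]=x[\cV^{r_1}]$; I pick any common type $\bv_0'\wedge\bv_1'$ tile $D_0\in\cV^{r_1}$ and rebuild the Wang patch construction of Lemma~\ref{pseudolines} using $D_0$ in place of the canonical $D_{\bZr}$, simultaneously in both tilings. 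For $\bn$ in an appropriate translate of $J$, the patches $w_\bn^{D_0}(x)=w_\bn^{D_0}(x')$ agree, hence so do their symbols. These symbols form $\Z^2$ Sturmian sequences by Lemma~\ref{readsturmian}, and Lemma~\ref{recover} then forces them to agree on all of $\Z^2$.

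The common Sturmian sequence determines a common $\bu_0\in\T^5_0$; since $x\in\sX_1$ is nonsingular, so is $y(\bu_0)$, and therefore $x'$ is nonsingular as well, with no worm or cartwheel to fill. Both $x$ and $x'$ are then translates $T^{\bt}y(\bu_0)^*$ and $T^{\bt'}y(\bu_0)^*$, and the common location of $D_0$ in $\R^2$ pins down $\bt=\bt'$ (both equal the vector from $D_0$'s position to the canonical $\bb_{\bZr}(\bu_0)^*$ in $y(\bu_0)^*$), giving $x=x'$. The main technical obstacle will be adapting Lemma~\ref{readsturmian} to begin from $D_0$ rather than from the canonical $D_{\bZr}$ and verifying that $D_0$'s location uniquely determines the translation parameter; beyond this bookkeeping, the argument uses only machinery already developed in the paper.
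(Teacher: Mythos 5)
Your core strategy coincides with the paper's: show that the strip $\cV^{r_1}$ contains entire Wang patches whose index set has full coordinate projections (using Lemma~\ref{follow} and the transversality of $\cV$ to the $0$- and $1$-grid lines to see that $\cV$ crosses every bent grid line), read off their symbols, extend the partial Sturmian data by Lemma~\ref{recover}, and reconstruct the tiling via Lemma~\ref{readsturmian}. Your choice $r_1=d+1$, with $d$ the maximal diameter of the $24$ Wang patches from Proposition~\ref{rr24}, together with your crossing argument, is a legitimate and slightly more hands-on substitute for the paper's Lemma~\ref{twos}; up to this point the two proofs are essentially identical.

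Where you genuinely diverge is the final step, pinning down the translation. The paper takes $r_1\ge\max\{r_0,r'_1\}$, where $r_0$ is the enlarged locally-free radius of Lemma~\ref{trans} (obtained from the third power of the Penrose inflation), observes that $B_{r_0}(\bZr)\subseteq\cV^{r_1}$, and concludes that among all translates $T^{\bt'}y(\bu_0)^*$ with $||\bt'||<4/3$ only one can match $x$ on the strip. You instead anchor both tilings at a common corner tile $D_0$ and assert that the two translation parameters coincide because both equal the vector from $D_0$ to the canonical corner of $y(\bu_0)^*$. This can be made to work, but note that what you defer as ``bookkeeping'' is precisely the content that Lemma~\ref{trans} exists to bypass: (i) you need a rebased version of Lemma~\ref{readsturmian} saying that the index-$\bZr$ patch of your $D_0$-based indexing of $x$ corresponds, under the translation conjugacy, to the canonical index-$\bZr$ patch of $y(\tilde{\bu}_0)^*$, where $\tilde{\bu}_0$ is computed from the rebased sequence --- as literally written, your formula involving $\bb_{\bZr}(\bu_0)^*$ presupposes this index correspondence rather than proving it; and (ii) you need that the $D_0$-based indexings of $x$ and $x'$ assign the same indices to the common patches in the strip, which requires an argument (for instance, that relative indices of strip patches are determined by the locally visible, signed sequence of bent-line crossings along $\cV$), since $x$ and $x'$ may disagree off the strip and indices are defined by global trail-following. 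Both points are provable with the equivariance machinery already in the paper (Lemmas~\ref{injective} and~\ref{movement}), so I would call your ending a workable variant rather than a gap; but if you want the shortest complete write-up, enlarge $r_1$ so that it also dominates $r_0$ and finish with Lemma~\ref{trans} exactly as the paper does.
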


\begin{proposition}\label{thirdpart}
Let $x\in \sX\backslash \sX_1$ be a singular Penrose tiling and let $\cV\subseteq\R^2$ be a direction in $\R^2$ 
not perpendicular to any 5th root of unity. Then there is an $r_2>0$ (independent of $x$ and $\cV$) so that 
if $x[\cV^{r_2}]=x'[\cV^{r_2}]$ for any Penrose tiling $x'\in \sX$, then $x=x'$.
\end{proposition}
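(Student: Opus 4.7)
My plan is to deduce $x = x'$ from $x[\cV^{r_2}] = x'[\cV^{r_2}]$ in two stages: first recover the underlying grid tiling $\varphi(x)$, then determine the specific filling of its singular feature. For the first stage I would use that the Wang patch decomposition and the Sturmian reading of Lemma~\ref{readsturmian} apply to every $x \in \sX$, singular or not. Since $\cV$ is not perpendicular to $\bv_0$ or $\bv_1$, the hypothesis of Proposition~\ref{secondpart} is met, and the same tensor-square recovery argument (via Lemma~\ref{recover} applied to the $\Z^2$ Sturmian subshift) shows that for some universal $r_1$, the patch $x[\cV^{r_1}]$ contains Wang patches whose indices $\bn$ project surjectively onto $\Z$ in each coordinate. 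This determines the full symbol array $\{\sigma(w_\bn(x))\}_{\bn \in \Z^2}$ and hence, via Lemma~\ref{readsturmian}, pins down $\varphi(x) = y(\bu)$ together with the translation $\bt_0$ of norm at most $4/3$. So any $x'$ agreeing with $x$ on $\cV^{r_2} \supseteq \cV^{r_1}$ satisfies $\varphi(x') = \varphi(x)$, and the problem reduces to distinguishing the fillings in $\varphi^{-1}(y(\bu))$.

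For the second stage I would handle the two singular cases from Section~\ref{singularcases} separately. In Case A, $\varphi^{-1}(y(\bu)) = \{x^+, x^-\}$ and the two tilings differ only along an infinite worm whose axis has direction $\bv_j^\perp$ for some $j$; the hypothesis that $\cV \ne \{\bv_j\}^\perp$ means $\cV$ and the worm axis are non-parallel lines, meeting at a unique point $\bq \in \cV$, so $B_{r_2}(\bq) \subseteq \cV^{r_2}$. For $r_2$ larger than a universal constant $C_1$ depending only on prototile diameters and worm width, $B_{r_2}(\bq)$ contains a completely filled worm hexagon, exposing the $\pm$ sign (which is constant along the entire worm). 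In Case B the cartwheel is centered at some $\bp$ with five unfilled half-worm pairs along the directions $\bv_j^\perp$, $j = 0, \dots, 4$, and the ten preimages are parameterized by the valid sign-tuples. For each $j$, $\cV$ meets the line through $\bp$ of direction $\bv_j^\perp$ at a unique $\bq_j \in \cV$ lying on whichever half-worm of the $j$th pair extends toward it; hence $B_{r_2}(\bq_j) \subseteq \cV^{r_2}$, and for $r_2$ larger than a constant $C_2$ depending only on the decagon radius and hexagon diameter, $B_{r_2}(\bq_j)$ contains a filled hexagon revealing the sign of the $j$th pair. Collecting all five signs uniquely identifies the filling pattern among the ten, forcing $x' = x$; taking $r_2 = \max(r_1, C_1, C_2)$ yields the universal constant required.

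The main obstacle will be confirming that $C_1$ and $C_2$ can be chosen uniformly in $x$ and $\cV$, even though the singular feature of $x$ may sit arbitrarily far from the origin and $\cV$ may be arbitrarily close (but not equal) to a forbidden perpendicular direction. The key observation is that $\bq$ and each $\bq_j$ automatically lie on $\cV$, so the associated balls sit inside $\cV^{r_2}$ regardless of the position of the singular feature; the problem then reduces to a fixed-scale geometric bound on when a ball centered on a worm axis contains a complete hexagonal patch, which follows from the finiteness and rigidity of the prototile set $p_5$.
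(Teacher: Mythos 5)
Your proposal is correct and follows essentially the same route as the paper: reuse the proof of Proposition~\ref{secondpart} (valid since $\cV$ is not perpendicular to $\bv_0$ or $\bv_1$) to recover $y(\bu)^*$ and the translation from $x[\cV^{r_1}]$, then intersect $\cV$ with the worm axis or the cartwheel spokes and use balls of a fixed universal radius around those intersection points to read off the filling direction(s). The only cosmetic difference is that the paper splits the cartwheel case into two sub-cases (a ball containing the unfilled decagon, versus hexagons on all five spokes), which your uniform constant $C_2$ absorbs implicitly; just note that your claim that each $\bq_j$ lies on a half-worm can fail when $\cV$ passes near the cartwheel center, in which case the ball around $\bq_j$ contains the filled decagon instead, and that equally determines the signs.
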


The proofs of these Propositions are similar, but we separate them for conceptual clarity. 
We begin with the following two lemmas.

\begin{lemma}\label{trans}
There exists an $r_0>0$ so that the following holds. Suppose $x\in \sX$ with $x=T^{\bt_0}x'$ for $||\bt_0||<4/3$.
If $x[B_{r_0}(\bZr)]=(T^{\bt'_0}x')[B_{r_0}(\bZr)]$ for $||\bt'_0||<4/3$ then $\bt'_0=\bt_0$.
\end{lemma}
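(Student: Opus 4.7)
The plan is to reduce the statement to a local near-period condition on a single tiling in $\sX$ and then apply parts~(1) and~(2) of Lemma~\ref{useful}. Combining $x=T^{\bt_0}x'$ with the hypothesis turns the assumed agreement into
\[
x[B_{r_0}(\bZr)]=(T^{\bs}x)[B_{r_0}(\bZr)],
\]
where $\bs:=\bt_0'-\bt_0$ satisfies $\|\bs\|<8/3$. The goal is to pick $r_0$, independent of all other data, that forces $\bs=\bZr$.

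I will take $r_0>1/\Delta_p$. With $\br_x=\br_y=\bZr$ in Definition~\ref{tmetric}, the agreement on $B_{r_0}(\bZr)$ gives $R(x,T^{\bs}x)\ge r_0$, hence $\rho(x,T^{\bs}x)\le 1/r_0<\Delta_p$. Now part~(\ref{useful2}) of Lemma~\ref{useful}, applied with $\br=\bZr$, yields $\rho(x,T^{\bs}x)=\|\bs\|$, so $\|\bs\|\le 1/r_0<\Delta_p$.

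It remains to pass from $\|\bs\|<\Delta_p$ to $\bs=\bZr$. Pick any point $\bt\in B_{r_0}(\bZr)$ in the interior of some tile of $x$; such $\bt$ exists since tile boundaries are Lebesgue null. The assumed patch equality then specializes to $x[\{\bt\}]=(T^{\bs}x)[\{\bt\}]$, and since $\|\bZr-\bs\|=\|\bs\|<\Delta_p$, part~(\ref{useful3}) of Lemma~\ref{useful} forces $\bs=\bZr$, i.e., $\bt_0'=\bt_0$. The argument is essentially mechanical once $r_0$ is chosen, so there is no real obstacle; notably, the bound $\|\bs\|<8/3$ (hence the $4/3$ bounds on $\|\bt_0\|$ and $\|\bt_0'\|$) plays no role in the argument and is presumably kept for uniformity with how the lemma is later applied.
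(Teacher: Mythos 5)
Your reduction to a single tiling, $x[B_{r_0}(\bZr)]=(T^{\bs}x)[B_{r_0}(\bZr)]$ with $\bs:=\bt'_0-\bt_0$, and the estimate $\rho(x,T^{\bs}x)\le 1/r_0$ are both fine, as is the final appeal to part~(\ref{useful3}) of Lemma~\ref{useful} \emph{once} $||\bs||<\Delta_p$ is known. The gap is the middle step, where you invoke part~(\ref{useful2}) of Lemma~\ref{useful} using only its first hypothesis, $\rho(x,T^{\bs}x)<\Delta_p$, to conclude $||\bs||=\rho(x,T^{\bs}x)$. That implication is not valid: the paper's own proof of part~(\ref{useful2}) runs through part~(\ref{useful3}), which requires the \emph{difference} of the two translations to be smaller than $\Delta_p$, so the lemma is only usable when that is already known (every other application in the paper is of this form). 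Read the way you read it, the statement is in fact false on $\sX$: since $(\sX,\R^2,T)$ is minimal (Theorem~\ref{RPenrose}), every $x\in\sX$ recurs under translation, so there are arbitrarily large $\bs$ with $\rho(x,T^{\bs}x)$ as small as you like --- and, after a further small correction via part~(\ref{useful1}), even with exact agreement $x[B_{r_0}(\bZr)]=(T^{\bs}x)[B_{r_0}(\bZr)]$ --- while $||\bs||$ is huge, so the identity $\rho(x,T^{\bs}x)=||\bs||$ cannot hold. This is also why your closing observation, that the bound $||\bs||<8/3$ ``plays no role,'' is a red flag rather than a simplification: an argument that never uses it would prove that the patch $x[B_{r_0}(\bZr)]$ never recurs in $x$, contradicting repetitivity of Penrose tilings.

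The size mismatch your argument skips over is exactly the content of Lemma~\ref{trans}: the unknown translation is only bounded by $8/3$, whereas every rigidity statement in Lemma~\ref{useful} is confined to translations below $\Delta_{p_5}=2/5$. The paper bridges this gap with an extra idea: it pushes the patch equality through the MLD self-similarity $(C^{-1})^3:\sX\to\gamma^3\sX$ of Proposition~\ref{defx}, choosing $r_0$ to be a local radius for that map (Definition~\ref{local}); in the inflated space the constant becomes $\Delta_{\gamma^3 p_5}=\gamma^3\Delta_{p_5}>4/3$, and only then is the rigidity of part~(\ref{useful3}) applied --- with the hypotheses $||\bt_0||,||\bt'_0||<4/3$ now doing essential work, since they are what get compared against the inflated constant. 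Any correct proof needs some mechanism of this kind (inflation, or some other quantitative use of the structure of $\sX$) to rule out nonzero near-periods of size up to $8/3$; it cannot be extracted from Lemma~\ref{useful} alone.
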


This is essentially the locally free property for $\sX$, but on a new scale.

\begin{proof}
For the unmarked Penrose prototiles $p_5$ we compute $\Delta_{p_5}=2/5$, (see Definition~\ref{tmetric}). 
For any prototile set $p$, $\Delta_{\gamma p}=\gamma{\Delta_p}$,
so $\Delta_{\gamma^3 p_5}>4/3$.
Consider the MLD topological conjugacy $(C^{-1})^3:\sX\to \gamma^3\sX$ where 
$\gamma^3\sX\subseteq X_{\gamma^3 p_5}$. Let $r_0>0$ be the radius for the local map $(C^{-1})^3$,
(see Proposition~\ref{local}). Now assume 
$(T^{\bt'_0}x')[B_{r_0}(\bZr)]= x[B_{r_0}(\bZr)]=(T^{\bt_0}x')[B_{r_0}(\bZr)]$.
Let $x_3:=(C^{-1})^3 x\in\gamma^3\sX$.
By conjugacy,  $(T^{\bt'_0} x'_3))[B_{r_0}(\bZr)]=(T^{\bt_0}x'_3))[B_{r_0}(\bZr)]$.
Since $||\bt_0||,||\bt'_0||<4/3<\Delta_{\gamma^3 p}$, Lemma~\ref{useful}(1) shows $\bt'_0=\bt_0$.
\end{proof}

\begin{lemma}\label{twos}
There is an $r'_1>0$ so that for all $x\in \sX_1$: 
\begin{equation}\label{ae1}
\forall n_0\in \Z\text{\rm\ and }\bs_0\in \lambda_{1,n_0},\exists
n'_1\in \Z\text{\rm\  so\ that }w_{(n_0,n_1)}(x)\subseteq B_{r'_1}(\bs_0),
\end{equation}
and 
\begin{equation}\label{ae2}
\forall n_1\in \Z\text{\rm\ and }\bs\in \lambda_{0,n_1},\exists
n'_0\in \Z\text{\rm\  so\ that }w_{(n_0,n_1)}(x)\subseteq B_{r'_1}(\bs_).
\end{equation}
\end{lemma}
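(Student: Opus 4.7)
The plan is to exploit the finiteness of the Wang prototile set to get a uniform diameter bound. By Proposition~\ref{rr24} there are only $24$ Wang prototiles $\{w^a : a\in\cA\}$ up to translation, so I would set
\[
M := \max_{a\in\cA} \operatorname{diam}(w^a) < \infty.
\]
Every Wang patch $w_\bn(x)$ in any Penrose tiling $x\in\sX$ is a translate of some $w^a$, giving $\operatorname{diam}(w_\bn(x)) \le M$ uniformly in $x$ and $\bn$.

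Next, I would observe that by the construction following Lemma~\ref{pseudolines}, the Wang patch $w_\bn(x)$ has corner tiles $D_\bn, D_{\bn+\be_0}, D_{\bn+\be_0+\be_1}, D_{\bn+\be_1}$, so their upper-right vertices $\bd_\bn, \bd_{\bn+\be_0}, \bd_{\bn+\be_0+\be_1}, \bd_{\bn+\be_1}$ all lie in $w_\bn(x)$. In particular, for $\bn=(n_0,n_1)$, both $\bd_{(n_0,n_1)}$ and $\bd_{(n_0,n_1+1)}$ belong to $w_{(n_0,n_1)}(x)$, so the edge segment $[\bd_{(n_0,n_1)},\bd_{(n_0,n_1+1)}]$ of the bent $1$-grid line $\lambda_{1,n_0}$ has length at most $M$.

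To prove (\ref{ae1}), given any $\bs_0\in\lambda_{1,n_0}$, by the definition of $\lambda_{1,n_0}$ as the piecewise linear curve through $\{\bd_{(n_0,n_1)}\}_{n_1\in\Z}$, I would pick $n'_1\in\Z$ so that $\bs_0$ lies on the segment $[\bd_{(n_0,n'_1)},\bd_{(n_0,n'_1+1)}]$. Then $\|\bs_0 - \bd_{(n_0,n'_1)}\| \le M$ by the previous step. For any $\bp\in w_{(n_0,n'_1)}(x)$, since $\bd_{(n_0,n'_1)}\in w_{(n_0,n'_1)}(x)$ we also have $\|\bp - \bd_{(n_0,n'_1)}\| \le M$. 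The triangle inequality then yields $\|\bp-\bs_0\|\le 2M$, so $w_{(n_0,n'_1)}(x)\subseteq B_{2M+1}(\bs_0)$. Setting $r'_1 := 2M+1$ establishes (\ref{ae1}), uniformly over $x\in\sX_1$.

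The proof of (\ref{ae2}) is verbatim after exchanging the roles of the two coordinates: given $\bs\in\lambda_{0,n_1}$, it lies on some segment $[\bd_{(n'_0,n_1)},\bd_{(n'_0+1,n_1)}]$ whose endpoints are both upper-right vertices of corner tiles of $w_{(n'_0,n_1)}(x)$, and the identical triangle inequality argument works with the same constant $r'_1$. There is no real obstacle; the entire content is the uniform diameter bound supplied by Proposition~\ref{rr24}.
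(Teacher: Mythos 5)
Your proof is correct and takes essentially the same approach as the paper: the paper also chooses $r'_1$ so that a ball around any point of a segment $\overline{\bd_\bn\,\bd_{\bn+\be_1}}$ (resp.\ $\overline{\bd_\bn\,\bd_{\bn+\be_0}}$) of a bent grid line contains the adjacent Wang patch(es), which implicitly rests on the same uniform diameter bound coming from the finiteness of the $24$ Wang prototiles. Your write-up simply makes that bound and the triangle-inequality step explicit, which the paper leaves tacit.
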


\begin{proof}
There is an $r'_1>0$ so that for any 
$\bn=(n_0,n_1)\in \Z^2$ the following holds: 
If $\bs_0\in \lambda'_{0,n_1}=\overline{\bd_\bn\,\bd_{\bn+\be_1}}$ 
and $\bs_1\in \lambda'_{1,n_0}=\overline{\bd_\bn\,\bd_{\bn+\be_0}}$,
then $w_\bn(x)\cup w_{\bn+\be_1}(x) \subseteq B_{r'_1}(\bs_0)$
and $w_\bn(x)\cup w_{\bn+\be_0}(x) \subseteq 
B_{r'_1}(\bs_1)$.
Thus for any $n_0\in \Z$ and 
$\bs\in\lambda_{1,n_0}(x)$,
 $B_{r'_1}(\bs)$ contains at least one Wang patch
from the row $\{w_{(n_0,n_1)}(x) \}_{n_1\in\Z}$.
This proves (\ref{ae1}), and the proof of   (\ref{ae2}) is the same.
\end{proof}

\begin{proof}[Proof of Proposition~\ref{secondpart}]
For each $\bn=(n_0,n_1)\in\Z^2$, let $z_{n_0}\otimes z'_{n_1}:=\sigma(w_\bn(x))$. 
By the assumption that $x\in\sX_1$,
Lemma~\ref{readsturmian} implies that 
$z\otimes z'\in Z\otimes Z$ is a $\Z^2$ Sturmian sequence,
Moreover,  Lemma~\ref{readsturmian} implies
that if $u_2=\phi(z)$, $u_4=\phi(z')$ and 
$u_3=\{-(u_2+u_4)\}$,  then for $\bu_0=(0,0,u_2,u_3,u_4)$ we have  
$x=T^{\bt_0} y(\bu_0)^*$ for $y(\bu_0)\in Y_1$ and (unique)
$\bt_0\in\R^2$ with $||\bt_0||<4/3$.

Let $r_1\ge\max\{r_0,r'_1\}$,  (Lemmas~\ref{trans} and \ref{twos}). 
The proof now proceeds by showing that we can find $z\otimes z'$ and $\bt_0$
from $x[\cV^{r_1}]$.

Since $\cV$ is not perpendicular to $\bv_0$ or $\bv_1$, it 
crosses every $0$-grid line $\ell_{0,n_1}$ and every $1$-grid line 
$\ell_{1,n_0}$ in $y(\bu)$. Thus by  Lemma~\ref{follow}, $\cV$ crosses every 
bent $0$-grid line  $\lambda_{0,n_1}$ and every bent $1$-grid line 
$\lambda_{1,n_0}$ in $x$.

For each $n_0\in \Z$, we use (\ref{ae1}) to find $\bs_0\in\cV\cap\lambda_{1,n_0}$
and $n'_1\in \Z$ so that $w_{(n_0,n'_1)}(x)\subseteq B_{r_1}(\bs_0)\subseteq \cV^{r_1}$.
Similarly, we use (\ref{ae2}) to find $\bs_1\in\cV\cap\lambda_{0,n_1}(x)$
and $n'_0\in \Z$ so that $w_{(n'_0,n_1)}(x)\subseteq B_{r_1}(\bs_1)\subseteq \cV^{r_1}$.
Let $J:=\{\bn\in\Z^2: w_{\bn}(x)\subseteq \cV^{r_1}\}$. 
Since $\cV^{r_1}=\bigcup_{\bs\in \cV}B_{r_1}(\bs)$, we see that 
$\pi_0(J)=\pi_1(J)=\Z$. Thus  $x[\cV^{r_1}]$ 
determines $(z\otimes z')[J]$, which by Lemma~\ref{recover}
determines $z\otimes z'$. 

Now suppose $x'=y(\bu_0)^*$ from above, so that $x=T^{\bt_0}x'$ with $||\bt_0||<4/3$. 
Let $\bt'_0\in\R^2$ satisfy $||\bt'_0||<4/3$ and suppose $x[\cV^{r_1}]= (T^{\bt'_0}x')[\cV^{r_1}]$,
(noting this holds for $\bt'_0=\bt_0$).
Then since $r_1\ge r_0$, we have $(T^{\bt'_0}x')[B_{r_0}(\bZr)]=(T^{\bt_0}x')[B_{r_0}(\bZr)]$ 
and Lemma~\ref{trans} implies $\bt'_0=\bt_0$.
\end{proof}

\begin{proof}[Proof of Proposition~\ref{thirdpart}]

The diameter of the regular decagon $D$ with edge length $2/5$ is $r'_2=(2/5)(1+\sqrt{5})$. If $D$ sits in the upper half 
plane with an edge on the $x$-axis, centered along the $y$-axis, then $D\cup (-D)\subseteq B_{r'_2}({\bZr})$. The same 
is true if one or both of the decagons is replaced with one or both of the hexagons $H_1$ and $H_2$ from 
the unfilled worms (in the upright position shown in Figure~\ref{worm}, (b)). It follows that if $\bs$ lies in the 
central axis of a worm or cartwheel in a singular Penrose tiling $x$, then $B_{r'_2}(\bs)$ contains either 
an entire tiled hexagon or tiled decagon. 

Now let $x$ be a singular Penrose tiling. Assume $r_2\ge \max\{r_1,r'_2\}$.  
We can apply the proof of Proposition~\ref{secondpart}, noting that 
$\cV$ is not perpendicular to $\bv_0$ or $\bv_1$, to see that $x[\cV^{r_2}]$ determines 
a tiling $y(\bu)^*$ which becomes $x$ after filling either (a) an unfilled worm 
or (b) an unfilled  cartwheel. 
Thus it remains to show that the direction of filling the worm ($+$ or $-$)
or the cartwheel ($0,1,\dots,8$ or $9$) can be determined
from $x[\cV^{r_2}]$.

By assumption, $\cV$ is not perpendicular 
to any 5th root of unity $\bv_j$, whereas the central axis 
$l$ of a worm, or a cartwheel spoke $l_j$ is perpendicular to some $\bv_j$.
In case (a) there is a single worm along an axis $l$. Let  
$\bs$ be the point where $l$ and $\cV$ intersect. Then 
by the choice of $r_2$
there is at least one hexagon $H$ in the  
unfilled worm in $y(\bu)^*$ satisfying
$H\subseteq B_{r_1}(\bs)\subseteq \cV^{r_2}$. That worm is filled in 
$x[\cV^{r_2}]$, and the direction of that filling determines how the worm is filled in $x$.

Case (b) is the case that $x$ has a cartwheel. There are two sub-cases: (i) $B_{r_2}(\bs)\subseteq\cV^{r_2}$ contains an 
unfilled decagon in $y(\bu)^*$. This decagon is filled in $x$, and that filling 
determines the direction of filling for all of the spokes.
(ii) $\cV$ crosses all five spokes, $l_j$, $j=0,1,\dots,4$.
If $\bs_j$ is the intersection of $l_j$ with $\cV$, then 
there is a hexagon $H^j\subseteq B_{r_2}(\bs_j)\subseteq\cV^{r_2}$ in $y(\bu)^*$. 
The filling of each $H^j$ determines the filling of the spoke it is in, which, in turn,  
determines the filling of the decagon. 
\end{proof}

\section{The Penrose Wang tiles}\label{subwang} \label{pwp}

\subsection{Wang tiles}\label{swangtiles}

For $n,m> 1$, let $\cA=\{0,\dots,n-1\}$ be a finite alphabet, and let 
$\cC_{{\rm b},{\rm t}}=\{0,..,m_{{\rm b},{\rm t}}-1\}$ 
and $\cC_{{\rm l},{\rm r}}=\{0,..,m_{{\rm l},{\rm r}}-1\}$ 
be two finite sets of \emph{colors}.
Formally, a \emph{Wang tile set} is $q:=\{Q^a:a\in\cA\}$ where, formally, each 
\emph{Wang tile} $Q^a$ is a tuple
$Q^a=(a,c^a_{\rm b},c^a_{\rm t},c^a_{\rm l},c^a_{\rm r})$ 
with $c^a_{\rm b},c^a_{\rm t}\in \cC_{{\rm b},{\rm t}}$ and $c^a_{\rm l},c^a_{\rm r}\in \cC_{{\rm l},{\rm r}}$.
More concretely, we view the Wang tile $Q^a$ as a unit square tile (shape  $\be_0\wedge\be_1$) 
with the \emph{label}\footnote{
Putting labels on a Wang tiles is not conventional, but it allows two different Wang tiles to have the same 
edge
colors. This makes it possible to realize the $\Z^2$ full shift with Wang tiles.
}
 $a$, and \emph{colored edges}   $c^a_{\rm b},c^a_{\rm t},c^a_{\rm l},c^a_{\rm r}$: bottom, top, left and 
 right. 
 We will interpret a Wang tile set $q$ as an FLC prototile set
by defining $q^{(2)}$ to be the condition that the tiles must meet edge to edge, 
and the colors of adjacent edges must match, and we 
 let $W_q$
be the corresponding FLC tiling space. 
If $W_q\ne\emptyset$ (see Section~\ref{history}), then $(W_q,\R^2,T)$
is an FLC tiling dynamical system that we call a \emph{Wang tile dynamical system}. 

Alternatively, we can interpret a Wang tile set 
$q$ as a recipe for $1$-step $\Z^2$ SFT $(Z_q,\Z^2,S)$, called a  \emph{Wang tile $\Z^2$ SFT}. 
For this, we define
$Z_q\subseteq \cA^{\Z^2}$ in terms of horizontal (and vertical) permitted words
$\cP_0$ (and $\cP_1$) 
where $a\,a'\in\cP_0$ if and only if $c^a_{\rm r}=c^{a'}_{\rm l}$ 
(and $(a\,a')^t\in\cP_1$ (i.e., $a'$ below $a$), if and only if $c^a_{\rm b}=c^{a'}_{\rm t}$).

\begin{remark}\label{makeSFT}
There is a 1:1 correspondence between $Z_q\times [0,1)^2$ and $W_q$, in which  $(z,\bu')$ with
$z=(a_\bn)_{\bn\in\Z^2}\in Z_q$, and $\bu'\in[0,1)^2$ corresponds to 
$\{Q^{a_\bn}+\bn+\bu'\}_{\bn\in\Z^2}\in W_q$.
\end{remark}

Now suppose that $(Z,\Z^2,S)$ is a 
$1$-step $\Z^2$ SFT, $Z\subseteq\cA^{\Z^2}$, defined by the horizontal 
and vertical permitted words $\cP_0$  
and $\cP_1$ (see Section~\ref{subshifts}). We say $(Z,\Z^2,S)$ is \emph{colorable}
if for some finite sets $\cC_{{\rm b},{\rm t}}$ and $\cC_{{\rm l},{\rm r}}$, there are functions 
(i)  $\xi_{\rm l},\xi_{\rm r}:\cA\to \cC_{{\rm l},{\rm r}}$ so that $aa'\in \cP_0$ if and only if 
$\xi_{\rm r}(a)=\xi_{\rm l}(a')$, and 
(ii) $\xi_{\rm b},\xi_{\rm t}:\cA\to \cC_{{\rm b},{\rm t}}$ so that $(aa')^t\in \cP_1$ if and only if 
$\xi_{\rm b}(a)=\xi_{\rm t}(a')$.  Clearly, if $(Z_q,\Z^2,S)$ is the $1$-step $\Z^2$ SFT that comes from
 a Wang tile set $q$, then it is colorable. Conversely, suppose that $(Z,\Z^2,S)$ is a colorable $1$-step SFT. 
For each $a\in \cA$ define the Wang tile 
$Q^a:=(a,\xi_{\rm b}(a),\xi_{\rm t}(a),\xi_{\rm l}(a),\xi_{\rm r}(a))\in q$. 
Then $(Z_q,\Z^2,S)$ is identified with $(Z,\Z^2,S)$ by the map that reads symbols. Note that our formulation
of Wang tiles here  
allows for $Z_q=\cA^{\Z^2}$ to be a full shift.

\begin{proposition}
Let $(W_q,\R^2,T)$ be a Wang tile dynamical system and let 
$(Z_q,\Z^2,S)$ be the corresponding $\Z^2$ SFT.  
Then a direction $\cV\subseteq\R^2$ is expansive for $(W_q,\Z^2,T)$
if and only if $\cV$ is expansive for $(Z_q,\Z^2,S)$. 
The same result holds for any $\Z^2$ subshift $Z\subseteq Z_q$, 
and the corresponding tiling space $W\subseteq W_q$.
 \end{proposition}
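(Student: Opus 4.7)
The plan is to apply Proposition~\ref{exp} on the tiling side and Lemma~\ref{asashift} on the SFT side to reduce both notions of directional expansiveness to a ``patch determination'' statement. For $(W_q,\R^2,T)$ the condition becomes the existence of an $r>0$ with $x[\cV^r]=y[\cV^r]\Rightarrow x=y$ for $x,y\in W_q$; for $(Z_q,\Z^2,S)$ it becomes the existence of $r>0$ such that $\cV^r\cap\Z^2$ determines $Z_q$. The bijection $W_q\leftrightarrow Z_q\times[0,1)^2$ from Remark~\ref{makeSFT} will transfer the two statements back and forth.

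For the forward implication I would assume $(W_q,\R^2,T)$ is $\cV$-expansive with radius $r$ and set $r':=r+2$. Given $z=(a_\bn),z'=(b_\bn)\in Z_q$ that agree on $\cV^{r'}\cap\Z^2$, I would form the tilings $x:=\{Q^{a_\bn}+\bn\}_{\bn\in\Z^2}$ and $y:=\{Q^{b_\bn}+\bn\}_{\bn\in\Z^2}$ corresponding via Remark~\ref{makeSFT} to $(z,\bZr)$ and $(z',\bZr)$. Any $\bn\in\Z^2$ whose unit square $\bn+[0,1]^2$ meets $\cV^r$ lies within $\sqrt{2}<2$ of $\cV$, so $\bn\in\cV^{r'}\cap\Z^2$ and $a_\bn=b_\bn$. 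Hence $x[\cV^r]=y[\cV^r]$, tiling expansiveness gives $x=y$, and therefore $z=z'$.

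For the converse I would assume $(Z_q,\Z^2,S)$ is $\cV$-expansive with radius $r$ and again take $r':=r+2$. Suppose $x,y\in W_q$ satisfy $x[\cV^{r'}]=y[\cV^{r'}]$, with $x\leftrightarrow(z,\bu')$ and $y\leftrightarrow(z',\bu'')$. Any single tile of $x$ meeting $\cV^{r'}$ coincides, as a positioned labeled unit square, with a tile of $y$; matching the positions of such a tile gives $\bn+\bu'=\bn'+\bu''$ for some $\bn,\bn'\in\Z^2$, so $\bu'-\bu''\in\Z^2\cap(-1,1)^2=\{\bZr\}$ and hence $\bu'=\bu''$. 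With the offsets identified, each $\bn\in\cV^r\cap\Z^2$ has its shifted square $\bn+\bu'+[0,1]^2$ within $\cV^{r'}$, so the patch equality delivers $a_\bn=b_\bn$ on $\cV^r\cap\Z^2$. The SFT expansiveness then yields $z=z'$, and $x=y$ follows. The same argument works verbatim for any $\Z^2$ subshift $Z\subseteq Z_q$ and the associated $W\subseteq W_q$.

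The main technical point I expect is the bookkeeping that the single equality $x[\cV^{r'}]=y[\cV^{r'}]$ simultaneously produces the offset match $\bu'=\bu''$ and symbol-by-symbol agreement on the enclosed lattice sites. Both facts fall out of the fact that Wang tiles are translates of a single unit square and that $[0,1)^2$ is a fundamental domain for the $\Z^2$-translation action on $\R^2$, so no estimate finer than $\sqrt{2}<2$ is needed to choose the buffer radius.
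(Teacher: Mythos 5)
Your proof is correct and takes essentially the same route as the paper: the paper's own proof is just the one-line citation of Lemma~\ref{asashift}, Definition~\ref{exp0}, Proposition~\ref{exp}, and Proposition~\ref{FLCdirectioncor}, leaving implicit exactly the transfer through the correspondence of Remark~\ref{makeSFT} that you write out. One minor wording fix in your converse direction: the square $\bn+\bu'+[0,1]^2$ need not be \emph{contained} in $\cV^{r'}$ (its far corner can lie at distance up to nearly $r+2\sqrt{2}>r'$ from $\cV$); what you need, and what is true, is that this tile \emph{meets} the open set $\cV^{r'}$ (its corner $\bn+\bu'$ lies within $\sqrt{2}<2$ of $\bn\in\cV^r$), hence belongs to the swatch $x[\cV^{r'}]=y[\cV^{r'}]$, which is all the label comparison requires.
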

 
 \begin{proof}This follows from Lemma~\ref{asashift}, Definition~\ref{exp0}, Proposition~\ref{exp}, and Theorem~\ref{FLCdirectioncor}.
\end{proof}

\subsection{Brief history}\label{history}
Wang tiles were first described by Hao Wang in 1961 \cite{Wang}, who posed 
the question, now known as \emph{Wang's problem}:  is it possible to determine algorithmically 
(from $q$ and $q^{(2)}$) 
whether or not $W_q$ is nonempty? 
Wang proved (see \cite{GS}) that the problem is solvable in the class of Wang tile sets $q$ such 
 that any $W_q\ne\emptyset$ contains a doubly periodic tiling $x$
(i.e., $T^{n_0\be_0} x=x$ and $T^{n_1\be_1} x=x$ for some 
$n_0,n_1>0$), and he conjectured that all Wang tile sets $q$ belong to this class.
Soon thereafter, however,
Berger \cite{Berger}  showed 
that there exists a Wang tile set $q$  with $W_q\ne \emptyset$ 
such that every $x\in W_q$ is aperiodic.
In such a case, it is customary to say $q$ is an \emph{aperiodic Wang tile set}.
Moreover, Berger  \cite{Berger} showed that  
in general, Wang's problem is undecidable.

In terms of dynamics, aperiodicity  for the Wang tile set $q$
means that $(W_q,\R^2,T)$ is free.
Berger's \cite{Berger}  apreiodic set $q$ of Wang tiles 
has $\#(q)>20,\!000$, which led to a race to reduce this number 
(see \cite{GS} for a nice account).
Recently, Jeandel and Rao \cite{JR} ended this race by constructing 
an aperiodic Wang tile set $q_{\rm jr}$ with  $\#(q_{\rm jr})=11$ 
and also showing that $\#(q_{\rm jr})=11$ is the smallest possible size.

Lebb\'e showed \cite{sebastian1} that the Jaendel/Rao SFT $(Z_{\!q_{\rm jr}},\Z^2,S)$ is uniquely ergodic but not minimal \cite{sebastian1}. However, $(Z_{\!q_{\rm jr}},\Z^2,S)$ has a unique 
minimal subshift $(Z_0,\Z^2,S)$, and that subshift has 
$\N_1(S)=\{0,\gamma+3,-3\gamma+2,-\gamma+5/2\}$, \cite{sebastian}, (but
{\it a priori}, the set $Z_{\!q_{\rm jr}}\backslash  Z_0$ could be responsible for additional non-expansive directions).
Finally, Lebb\'e showed that $(Z_0,\Z^2,S)$ is almost automorphic, with 
$\Sigma_S=(\Z[\alpha]/\Z)\times(\Z[\alpha]/\Z)$,
the same as Raphael Robinson's Penrose Wang tile SFT, (Section~\ref{prwt}). 
However, the two are not topologically conjugate.

\subsection{Suspensions, compositions, deformations, and 
continuous orbit equivalence}\label{ke}\ \\
Given a  dynamical system 
$(Z,\Z^2,S)$, let  ${\widetilde Z}=Z\times [0,1]^2/\sim$, where 
$(z,(1,s_1))\sim(S^{\be_0}z,(0,s_1))$ and  $(z,(s_0,1))\sim(S^{\be_1}z,(s_0,0))$
for  $z\in Z$ and $\bs:=(s_0,s_1)\in[0,1]^2$. 
The \emph{unit suspension} of $(Z,\Z^2,S)$ 
is the dynamical system
$({\widetilde Z},\R^2,{\widetilde S})$ defined 
 by ${\widetilde S}^\bt(z,\bs)=(S^{\lfloor \bt\rfloor}z,\{\bt+\bs\})$. For a Wang tiling system $(W_q,\R^2,T)$, where $q=\{Q^j:j\in\cA\}$, 
and $(Z_q,\Z^2,S)$ is the corresponding SFT, clearly
$(\widetilde{Z_q},\R^2,\widetilde{S})$  is topologically conjugate to $(W_q,\R^2,T)$.

Next, we change the shape of the Wang tiles $q$ from squares to a parallelograms. Let  
$\bbf_0,\bbf_1\in\R^2$ with $0<\angle\bbf_0\bbf_1<\pi$, and let $A=[\bbf_0;\bbf_1]^t$.
The \emph{parallelogram Wang tiles} $Aq:=\{AQ^a:Q^a\in q\}$ are defined to be the copies 
of $\bbf_0\wedge\bbf_1=A(\be_0\wedge\be_1)$ marked the same way as the
squares in $q$. Following Remark~\ref{makeSFT}, we 
write $v\in W_{Aq}$ by $v=\{AQ^{a_\bn}+A(\bn-\bu')\}_{\bn\in \Z^2}$, where $\bu'\in[0,1)^2$ and  $(a_\bn)_{\bn\in\Z^2}\in Z_q$.
Here $(Aq)^{(2)}:=A(q^{(2)})$. 

A parallelogram Wang tiling dynamical system $(W_{Aq},\R^2,T)$ is no longer
the unit suspension
of $(Z_q,\Z^2,S)$.
However, 
we can recover it
from $({\widetilde {Z_q}},\R^2,{\widetilde S})$ as follows:
For an $\R^2$-action $(X,\R^2,T)$, and a matrix $A\in Gl(2,\R)$, we define the \emph{composition action}
by $(T\circ A)^\bt x:=T^{A\bt}x$.

\begin{lemma}\label{composed}
\
\begin{enumerate}
\item The parallelogram Wang tile system  $(W_{Aq},\R^2,T)$ is topologically conjugate 
to the composition action $({\widetilde {Z_q}},\R^2,{\widetilde S}\circ A)$. 
\item The parallelogram Wang tile system  $(W_{Aq},\R^2,T)$ is continuously orbit equivalent (Definition~{\rm \ref{equivalences}}) to the Wang tile system $(W_{q},\R^2,T)$.
\item  The (weakly and strongly) 
expansive directions for  $(W_{Aq},\R^2,T)$  are the directions
$A(\cV)$ for  the   
expansive directions $\cV$ of $(Z_q,\Z^2,S)$.

\end{enumerate}
\end{lemma}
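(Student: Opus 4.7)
The plan is to establish (1) as the technical core, with (2) and (3) following quickly from it.

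For (1), define $\Phi:\widetilde{Z_q}\to W_{Aq}$ using the unique representation of every $v\in W_{Aq}$ as $\{AQ^{a_\bn}+A(\bn-\bu')\}_{\bn\in\Z^2}$ with $(a_\bn)\in Z_q$ and $\bu'\in[0,1)^2$ (Remark~\ref{makeSFT} applied to $Aq$): send the class of $(z,\bs)\in Z_q\times[0,1]^2$ to the tiling with symbol sequence $z$ and continuous offset $\bs$. The boundary identifications in $\widetilde{Z_q}$, gluing $(z,(1,s_1))$ to $(S^{\be_0}z,(0,s_1))$ and similarly in the $\be_1$ direction, are precisely what is needed to make $\Phi$ single-valued, since crossing a unit edge of $[0,1]^2$ only re-indexes which parallelogram contains the origin. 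Both $\Phi$ and its inverse are continuous because the tiling metric (Definition~\ref{tmetric}) and the product topology on the suspension both measure closeness as agreement on a large central patch together with a small continuous perturbation of $\bs$. The intertwining $\Phi\circ(\widetilde{S}\circ A)^\bt=T^\bt\circ\Phi$ is then a direct computation: translating $v$ by $\bt$ in $W_{Aq}$ shifts the offset inside each parallelogram by an $A$-reparameterization of $\bt$ and automatically re-indexes tiles whenever the offset crosses a unit edge, which matches $\widetilde{S}^{A\bt}$.

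For (2), the geometric stretching map
\[
\psi:W_q\to W_{Aq},\qquad \{Q^{a_\bn}+\bn-\bu'\}_{\bn\in\Z^2}\mapsto\{AQ^{a_\bn}+A(\bn-\bu')\}_{\bn\in\Z^2}
\]
is a bijection induced by the linear isomorphism $A$ and is bi-Lipschitz on each tile, so a homeomorphism in the tiling metrics. A one-line computation gives $\psi\circ T^\bt=T^{A\bt}\circ\psi$, so $\psi(O_T(y))=O_T(\psi(y))$ for every $y\in W_q$, which is exactly the continuous orbit equivalence required by Definition~\ref{equivalences}.

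For (3), combine (1) with the observation that weak and strong directional expansiveness are topological conjugacy invariants, an immediate consequence of Definition~\ref{directional} applied on each side. Thus the expansive directions of $(W_{Aq},\R^2,T)$ equal those of $(\widetilde{Z_q},\R^2,\widetilde{S}\circ A)$. Unwinding Definition~\ref{directional} with the substitution $\bt'=A\bt$ shows that $\cV$ is expansive for $\widetilde{S}\circ A$ if and only if $A(\cV)$ is expansive for $\widetilde{S}$; this uses that $A(\cV^r)$ and $(A\cV)^{r'}$ are comparable neighborhoods, since $A$ is bi-Lipschitz. Finally, the expansive directions of $\widetilde{S}$ on $\widetilde{Z_q}$ agree with those of $T$ on $W_q$ (the $A=I$ case of (1)), and hence with those of the SFT $(Z_q,\Z^2,S)$ by the proposition immediately preceding this lemma. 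The main obstacle is really bookkeeping: keeping the linear map $A$ on the correct side throughout (1) and (3), so that the final direction identity in (3) comes out as $A(\cV)$ rather than $A^{-1}(\cV)$. The geometric and symbolic ingredients themselves are each routine.
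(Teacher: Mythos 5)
Part (2) of your proposal is correct, and in fact cleaner than the paper's route (the paper derives (2) from (1) plus the observation that $T$ and $T\circ A$ have the same orbits on $W_q$; your stretching map $\psi(x)=Ax$ with $\psi\circ T^\bt=T^{A\bt}\circ\psi$ does the job with no dependence on (1)). The genuine problem is in (1), and it propagates fatally into (3). With the paper's conventions ($T^\bt v=\{D-\bt:D\in v\}$, tiles of $W_{Aq}$ supported on translates of $A[0,1]^2$), translating $v=\{AQ^{a_\bn}+A(\bn-\bu')\}_{\bn\in\Z^2}$ by $\bt$ gives $T^\bt v=\{AQ^{a_\bn}+A(\bn-\bu'-A^{-1}\bt)\}_{\bn\in\Z^2}$: the square-coordinate offset moves by $A^{-1}\bt$, not by ``an $A$-reparameterization of $\bt$.'' So your map $\Phi$ intertwines $T^\bt$ with $\widetilde{S}^{A^{-1}\bt}=(\widetilde{S}\circ A^{-1})^\bt$, not with $(\widetilde{S}\circ A)^\bt$; your claim that the computation ``matches $\widetilde{S}^{A\bt}$'' is exactly backwards. (Sanity check with $A=2I$: moving geometric distance $\bt$ across tiles of side $2$ advances the suspension coordinate by $\bt/2$.)

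Now look at your (3). Your substitution step is correct: $\cV$ is expansive for $\widetilde{S}\circ A$ if and only if $A(\cV)$ is expansive for $\widetilde{S}$. But combining that with your version of (1) yields that the expansive directions of $(W_{Aq},\R^2,T)$ are $A^{-1}(\cV)$ for $\cV$ expansive for $(Z_q,\Z^2,S)$ --- the opposite of what part (3) asserts. You flagged precisely this bookkeeping risk in your last sentences, but you never resolved it, and as written your (1) and your (3) are mutually inconsistent (unless $A(\cV)=A^{-1}(\cV)$ for every direction). The resolution is that the honest computation gives $(W_{Aq},\R^2,T)$ topologically conjugate to $(\widetilde{Z_q},\R^2,\widetilde{S}\circ A^{-1})$; feeding that into your chain produces $A(\cV)$, which is the version of (3) the paper actually uses later (it is the one consistent with Corollary~\ref{pwt2} and with the spectrum computation via Remark~\ref{rcomposed}). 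I note the same $A$ versus $A^{-1}$ slip appears in the paper's own statement of (1) and in the parenthetical of its proof of (2), so you have reproduced an error of the source rather than invented one; but a proof must be internally consistent, and yours is not. The cheapest repair: prove (3) directly from your map $\psi$ of part (2), since $\psi\circ T^\bt=T^{A\bt}\circ\psi$ together with symbolic expansiveness (Proposition~\ref{exp}) shows $\cV$ is expansive for $(W_q,\R^2,T)$ if and only if $A(\cV)$ is expansive for $(W_{Aq},\R^2,T)$, and then quote the proposition preceding the lemma to pass to $(Z_q,\Z^2,S)$; this bypasses (1) entirely.
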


\begin{proof}
For (1), we construct a model for ${\widetilde S}\circ A$ on $Z_q\times(\bbf_0\wedge\bbf_1)$. The conjugacy is the 
homeomorphism ${\rm id}\times A:Z_q\times [0,1)^2\to  Z_q\times(\bbf_0\wedge\bbf_1)$. 
Part (2) follows from part (1), and the fact 
$(W_q,\R^2,T)$ and $(W_q,\R^2,T\circ A)$, 
(which is topologically conjugate to $(W_{Aq},\R^2,T)$), both act on  
$W_q$, and have the same orbits. 
Part (3) follows from Propositions~\ref{exp} and \ref{FLCdirectioncor}.
\end{proof}

\begin{remarks}\label{rcomposed}
(See e.g. \cite{RRS}):
\begin{enumerate}
\item Suppose that $\widetilde T$ is the unit suspension of $T$. Then $\Sigma_{\widetilde T}=\pi^{-1}\Sigma_T$
where $\pi(\bs):=\{\bs\}:\R^2\to\T^2$.
\item The point spectrum of an $\R^2$  composition action $T\circ A$ satisfies $\Sigma_{T\circ A}=A^t\Sigma_T$.
 \end{enumerate}
\end{remarks}

\begin{remark}\label{deformation}
In the terminology of \cite{SW} and \cite{CS}, a
\emph{deformation} $\psi$ is a continuous orbit equivalence 
(i.e., orbit preserving homeomorphism)  
between two FLC tiling spaces 
such that the restriction to each tile depends only on its prototile type.
The point of view in \cite{SW} is closer to ours here\footnote{The point of view in \cite{CS} is in terms of the 
Anderson-Putnam complex.}, 
and in particular, it is shown in \cite{SW}  that any 
$\R^2$ FLC tiling space $(X,\R^2,T)$ is 
continuously orbit equivalent to the unit suspension of 
$\Z^2$ subshift, essentially
realized as a subspace of a square Wang tiling space
(any full shift can be realized by Wang tiles). 
Some examples of deformation are discussed in Section~\ref{prwt}.
\end{remark}

\subsection{The Penrose Wang tiles}\label{prwt}

From now on, $\cA:=\{0,\dots,23\}$.
In Section~\ref{dgp}  we discussed the $24$ Wang patches 
$\{w^a:a\in\cA\}$, shown in Figure~\ref{wangpatches}. 
Here we will construct the corresponding $24$ \emph{Penrose Wang tiles} due to
Raphael Robinson (\cite{GS}, Exercise 11.1.2), 
made by deforming the  Wang patches, as well as some related tiles and tilings.

For each the bottom and top margin ($s\in\{{\rm b},{\rm t}\}$)
of each Wang patch $w^a$, we assign a finite word 
\emph{edge code} ${\beta}^a_s\in\{\2,\3,4\}^*$
that records 
the tile edges along the outside of that margin. 
Similarly, for each left and right margin ($s\in\{{\rm l},{\rm r}\}$)
we  assign an edge code ${\beta}^a_s\in\{2,\3,\4\}^*$.
We record, 
$j$ for $\bv'_j$ and $\overline{j}$ for $-\bv'_j$, skipping the corner tiles ($j=0$ or $j=1$) at either end. 
The edges are read left to right 
for $\beta_{\rm b}^a$ and $\beta_{\rm t}^a$, 
and up for $\beta_{\rm l}^a$ and $\beta_{\rm r}^a$. 
For example, since the edges along the bottom margin in $w^0$ are 
$\bv'_0,\bv'_4,-\bv'_3,-\bv'_2 ,\bv'_0$, we record  $\beta^0_{\rm b}=4\overline{3}\overline{2}$.
The edge codes are shown, for each $a\in \cA$, in the second column of Table~\ref{codes}.

\begin{table}
\begin{tabular}{||c||c||c||c||c||}
\hhline{||=||=||=||=||=||}
$a\in \cA$
&$\beta^a_{\rm b},\beta^a_{\rm t},\andd \beta^a_{\rm l},\beta^a_{\rm r}$
&$c^a_{\rm b},c^a_{\rm t} \andd c^a_{\rm l},c^a_{\rm r}$
&$v \text{ for }\bb^v_{\rm b},\bb^v_{\rm t}\andd
\bb^v_{\rm l},\bb^v_{\rm r}$&$t\text{ for }U^t$\\
\hhline{||=||=||=||=||=||}
\hline
0&$4\3\2, 4\2,\  2\3\4, 2\4$, &0,1, 0,1&0,1, 0,1&0\\
\hline
1&$ {2\4,2\3\4},\ \42, \4\32$&	2,3, 2,3& 1,0, 1,0&1\\
\hline
2& $\2, \3\2\ , \4, \3\4$ &4,5, 4,5&2,3, 2,3&2\\
\hline
3&$\2\3, \2,\ \4\3, \4$ &6,4,  6,4 &3,2, 3,2&3\\
\hline
4&	$\3\24, \2\34,\  \3\42, \4\32$& 7,3, 7,3&0,0, 0,0&4\\
\hline
5&	$4\3\2, 4\2\3,\ 2\3\4, 2\4\3$ &0,8, 0,8	&0,0, 0,0&4\\ 	
\hline
6&	$4\2, \3\24,\ 2\4, \3\42$	&1,7, 1,7	&1,0, 1,0&1\\ 
\hline
7&	$4\2\3, \24,\ 2\4\3, \42$&	8,2, 8,2&	0,1, 0,1&0\\ 
\hhline{||=||=||=||=||=||}
8&	$\2\34, 4\2\3,\ \3\4, \4\3$&	3,8, 5,6&	0,0, 3,3&5\\ 
\hline
9&$   
{\3\24}, 4\3\2,\ \3\4, \4\3$&	7,0, 5,6	&0,0, 3,3&5\\ 
\hline
10&	$\3\2, \2\3,\ \4\32, 2\4\3$&	5,6, 3,8	&3,3, 0,0&6\\ 
\hline
11&	$\3\2, \2\3,\ \3\42, 2\3\4$&	5,6, 7,0	&3,3, 0,0&6\\
\hhline{||=||=||=||=||=||}
12&	$\2\3, \2,\ \4\32, \42$&	6,4, 3,2	&3,2, 0,1&7\\
\hline
13&	$\2, \3\2,\ 2\4, \42\3$&	4,5, 1,0	&2,3, 1,0&8\\
\hline
14&	$\2\34, \24,\ ,\4\3, \4$&	3,2, 6,4	&0,1, 3,2&9\\ 
\hline
15&	$4\2, 4\3\2,\ \4, \3\4$&	1,0, 4,5	&1,0, 2,3&10\\ 
\hhline{||=||=||=||=||=||}
16&	$\24, \3\24,\ 2\4, \4\32$& 	2,7, 1,3	&1,0, 1,0&1\\ 
\hline
17&	$4\2\3, 4\2,\ 2\3\4, \42$&	8,1, 0,2	&0,1, 0,1&0\\
\hline
18&	$4\3\2, \24,\ 2\4\3, \24$&	0,2, 8,1	&0,1, 0,1&0\\
\hline
19&	$4\2, \2\34,\ \42, \3\42$&	1,3, 2,7	&1,0, 1,0&1\\
\hhline{||=||=||=||=||=||}
20&	$\2, \2\3,\ \42, 2\3\4$&	4,6, 2,0	&2,3, 1,0&8\\ 
\hline
21&	$\3\2, \2,\ \4\32, 2\4$&	5,4, 3,1	&3,2, 0,1&7\\ 
\hline
22&	$\24, 4\3\2,\ \4, \4\3$&	2,0, 4,6	&1,0, 2,3&10\\ 
\hline
23&	$\2\34, 4\2,\ \3\4, \4$	&3,1, 5,4	&0,1, 3,2&9\\ 
\hhline{||=||=||=||=||=||}
\end{tabular}
\caption{Columns 1-3: edge codes for Wang patches and corresponding Wang tile edge colors. 
Columns 4 and 5: edge vector numbers for tetragon Wang tiles, and tetragon Wang tile type.  
\label{codes}}
\end{table}

Column 2 of Table~\ref{codes} shows 
that there are nine distinct top and bottom code words, and for each of them we assign a 
\emph{color} 
$c_{\rm b}, c_{\rm t}\in \cC_{{\rm b}, {\rm t}}=\{0,1,\dots,8\}$. 
Similarly, there are nine distinct left and 
right code words each assigned a color 
$c_{\rm l}, c_{\rm r}\in \cC_{{\rm l}, {\rm r}}=\{0,1,\dots,8\}$. 
The colors are shown in Table~\ref{codes}, Column 3.

\begin{definition}
For each row  in Table~\ref{codes}, (each $a\in\cA$), we define a \emph{Penrose Wang tile} 
$Q^a:=(a,c_{\rm b},c_{\rm t},c_{\rm l},c_{\rm r})$, where  
$c_{\rm b},c_{\rm t},c_{\rm l},c_{\rm r}$ are the colors shown in the Column 3.
The  $24$ \emph{Penrose Wang tiles} are given by 
$q_{\rm p}=\{Q^a:a\in \cA\}$, where $q_{\rm p}^{(2)}$ is 
as described for Wang tiles in general in Section~\ref{swangtiles}.
\end{definition}

Two tiles $Q^a$ and $Q^{a'}$ can 
be vertically (or horizontally) adjacent in some Wang tiling in $W_{q_{\rm p}}$ 
if and only if the
same is true for the corresponding Wang patches $w^a$ and $w^{a'}$ for some $x\in\sX$.  
The  \emph{Penrose Wang tile} dynamical system is denoted 
by $(W_{q_{\rm p}},\R^2,T)$, and the corresponding 
$\Z^2$ SFT is denoted by 
$(Z_{q_{\rm p}},\Z^2,S)$, where we write $(a_\bn)_{\bn\in\Z^2}\in Z_{q_{\rm p}}$.

\begin{corollary}\label{free}
Let $\{Q^{a_\bn}+\bn+\bu'\}_{\bn\in \Z^2}\in  W_{q_{\rm p}}$, $\bu'\in [0,1)^2$, $(a_\bn)_{\bn\in\Z^2}\in Z_ {q_{\rm p}}$, 
be an arbitrary Penrose Wang tiling, 
$($see Remark~{\rm \ref{makeSFT}}$)$.
Then there is a 
Penrose tiling $x\in\sX$, unique up to translation, so that 
$w_\bn(x)$ is type $w^{a_\bn}$ for all $\bn\in\Z^2$. 
Conversely, for each Penrose tiling $x\in\sX$ there is a Penrose Wang tiling 
$\{Q^{a_\bn}+\bn\}_{\bn\in\Z^2}$, unique up to translation, 
so that $w_\bn(x)$ is type $w^{a_\bn}$ for each $\bn\in\Z^2$.
In particular, $(W_{q_{\rm p}},\R^2,T)$ and $(\sX,\R^2,T)$ are continuously orbit equivalent
(via a deformation), and   
 $(W_{q_{\rm p}},\R^2,T)$ and  $(Z_{q_{\rm p}},\Z^2,S)$
are nonempty and free.
\end{corollary}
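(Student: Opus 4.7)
The plan is to identify the color-matching conditions defining $W_{q_{\rm p}}$ with the combinatorial conditions governing how Wang patches fit together inside Penrose tilings. By the design of Table~\ref{codes}, two colors $c^a_{\rm r}$ and $c^{a'}_{\rm l}$ coincide if and only if the edge codes $\beta^a_{\rm r}$ and $\beta^{a'}_{\rm l}$ coincide (and analogously for top/bottom), so matching colors is exactly the statement that the sequences of Penrose rhomb edges along the shared margin of $w^a$ and $w^{a'}$ agree in length, direction, and orientation.

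For the converse direction, I would start with $x \in \sX$, apply Lemmas~\ref{pseudolines} and \ref{nns} to obtain the canonical covering $x = \bigcup_{\bn\in\Z^2} w_\bn(x)$, and invoke Proposition~\ref{rr24} to select a unique $a_\bn \in \cA$ with $w_\bn(x)$ a translate of $w^{a_\bn}$. Horizontal and vertical adjacency of the Wang patches inside $x$ forces the right edge code of $w^{a_\bn}$ to agree with the left edge code of $w^{a_{\bn+\be_0}}$ (and analogously vertically), whence the colors match and $(a_\bn)_{\bn\in\Z^2} \in Z_{q_{\rm p}}$. The associated Wang tiling $\{Q^{a_\bn}+\bn+\bu'\}_{\bn\in\Z^2}$ is then determined once $\bu'\in[0,1)^2$ is fixed; the residual ambiguity in $\bu'$ and in the $\Z^2$-indexing corresponds exactly to the $\R^2$-translation freedom in $x$.

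Going the other way, from a Wang tiling $v = \{Q^{a_\bn}+\bn+\bu'\}_{\bn\in\Z^2} \in W_{q_{\rm p}}$ I would replace each unit square $Q^{a_\bn}$ by a suitably positioned copy of $w^{a_\bn}$, aligning the convex hull of its four corner tiles with the square. Matching colors of adjacent Wang tiles translate into matching edge codes, so the Penrose rhombs along any shared boundary align perfectly; since each $w^{a_\bn}$ is itself a patch from some genuine Penrose tiling (Proposition~\ref{rr24}), Lemma~\ref{arrows} supplies consistent arrows in the interior of each patch, and the matching boundary codes guarantee arrows match across patches as well. Thus every $2$-tile sub-patch of the glued tiling lies in $p^{(2)}_{5\rm a}$, so the result is in $X_{p_{5\rm a}}$ and its erasing lies in $\sX$.

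Nonemptiness then follows from $\sX\ne\emptyset$ (Proposition~\ref{defx}), and freeness of both $(W_{q_{\rm p}},\R^2,T)$ and $(Z_{q_{\rm p}},\Z^2,S)$ follows from freeness of $(\sX,\R^2,T)$ in Theorem~\ref{RPenrose}. The continuous orbit equivalence is a deformation in the sense of Remark~\ref{deformation}: each Wang tile $Q^a$ is replaced by the Wang patch $w^a$ independently of its surroundings, so the restriction to each tile depends only on its prototile type. The step I expect to be the main obstacle is the purely combinatorial bookkeeping of verifying from Table~\ref{codes} that the color assignment is both well-defined (equal edge codes get equal colors) and injective on the set of edge codes that actually occur, so that color matching is equivalent to edge-code matching; once this is confirmed, the rest of the argument is geometric and largely forced by Proposition~\ref{rr24} and Lemma~\ref{nns}.
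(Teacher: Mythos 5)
Your first two paragraphs (the color/edge-code bookkeeping and the Penrose-to-Wang direction via Lemmas~\ref{pseudolines}, \ref{nns} and Proposition~\ref{rr24}) are sound and agree with what the paper does implicitly. The genuine gap is in the third paragraph, the Wang-to-Penrose direction, which is the substantive half of the statement. Gluing code-matched Wang patches only produces a tiling in $X_{p_5}$; the corollary claims the result lies in $\sX$, which is a strictly smaller set --- $X_{p_5}\setminus\sX$ contains periodic tilings (Figure~\ref{unmarked}(b)), so edge-to-edge geometric consistency of unmarked rhombs emphatically does not imply membership in $\sX$. Your mechanism for bridging this --- that Lemma~\ref{arrows} ``supplies consistent arrows in the interior of each patch'' and that ``matching boundary codes guarantee arrows match across patches'' --- does not hold up. Lemma~\ref{arrows} is a statement about complete tilings: its inverse $E^{-1}$ is local with a positive radius, so the arrowing of a tile near the boundary of a Wang patch is not determined by the patch alone but by context extending outside it; and the edge codes $\beta^a_s$ record only unmarked edge vectors, not arrow decorations, so equality of codes gives no information about arrows. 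To see that this reasoning pattern cannot be valid, apply it with single rhombs in place of Wang patches (taking edge types as ``codes''): it would then ``prove'' that every tiling in $X_{p_5}$ can be consistently arrowed, i.e.\ that $X_{p_5}=\sX$, which is false.

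What you are asserting at that step is exactly the content of Raphael Robinson's theorem --- that these particular trail-bounded patches, with these particular margins, carry enough information that pairwise code-matching forces the global assembly to be a Penrose tiling. That is the heart of the corollary and cannot be obtained for free; it requires either the finite structural analysis behind Section~5 (the bifurcation diagram of Figure~\ref{markov} viewed as a Markov partition, together with a Lemma~\ref{readsturmian}-type argument showing the symbol sequence read from an arbitrary element of $Z_{q_{\rm p}}$ is $\Z^2$ Sturmian and hence realized by some $y(\bu)^*$ with worms or a cartwheel filled), or an explicit citation to \cite{GS} (Exercise 11.1.2). Note that the paper itself does not spell this out either: it derives the corollary from the unproved assertion just before it, that two Wang tiles can be adjacent in $W_{q_{\rm p}}$ if and only if the corresponding Wang patches can be adjacent in some $x\in\sX$, with the local-to-global step left implicit. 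The difference is that the paper leans on that stated equivalence (attributable to R.~Robinson), whereas your write-up replaces it with an argument --- arrow consistency from code matching --- that, as justified, is incorrect.
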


\begin{definition}
We define \emph{Penrose parallelogram Wang tiles} as follows:
For $\bbf_0$ and $\bbf_1$ in (\ref{rhombs}), let $A=[\bbf_1;\bbf_0]^t$, so that 
$A[0,1]^2=\bbf_1\wedge\bbf_0=R$, the rhombus that supports the grid patches. 
The \emph{Penrose parallelogram Wang tiles} are the tiles $Aq_{\rm p}=\{AQ^a:a\in\cA\}$,
which  
differ from the square Penrose Wang tiles $q_{\rm p}$ only in their shape. 
\end{definition}

Like $(W_{q_{\rm p}},\R^2,T)$, the Penrose parallelogram Wang tile
dynamical system 
$(W_{Aq_{\rm p}},\R^2,T)$ is nonempty, free, and 
is Kakutanni equivalent to $(\sX,\R^2,T)$.
More of its properties are described in Theorem~\ref{pwt} below.

\medskip

Next, we recall the unmarked Penrose tetragon tilings 
$u=\{U_\bn(x)\}_{\bn\in\Z^2}$, 
described in Definition~\ref{tetragons}. 
For any $x\in\sX$, the vertices 
are $\bd_\bn$, $\bd_{\bn+\be_0}$, $\bd_{\bn+\be_0+\be_1}$ and $\bd_{\bn+\be_1}$.
We denote the vectors along the four sides by 
\begin{align*}
\bb_{\rm b}&:=\bd_{\bn+\be_0}-\bd_\bn, \ \ 
&\bb_{\rm t}:=\bd_{\bn+\be_0+\be_1}-\bd_{\bn+\be_1},\\
 \bb_{\rm l}&:=\bd_{\bn+\be_1}-\bd_\bn,\text{\ \ \ \ \ \ \ \ \ \ and } 
 &\bb_{\rm r}:=\bd_{\bn+\be_0+\be_1}-\bd_{\bn+\be_0}.
 \end{align*}  
 These vectors can also be found by summing the edge vectors along the 
sides of  the corresponding Wang patch $w_\bn(x)$: 

\begin{lemma}\label{type}
Suppose that $U_\bn(x)$ is the unmarked tetragon Penrose tile corresponding to the 
Wang patch $w_\bn(x)$ in some $x\in \sX$
$($we think of $U_\bn$ as type $U^a$ if  the type of $w_\bn(x)$ is $w^a$\,$)$. 
Then for each $s\in\{{\rm b},{\rm t}\}$ and $s'\in\{{\rm l},{\rm r}\}$,
\begin{equation}\label{sums}
\bb_{s}^a=\bv'_0+\sum_{j\in \beta_s^a}\bv'_j\text{ and } 
\bb_{s'}^a=\bv'_1+\sum_{j\in \beta_s^a}\bv'_j.
\end{equation}
In particular, $\bb_{s}^a$ for  $s\in\{{\rm b},{\rm t},{\rm l},{\rm r}\}$ depends only
on the code word $\beta_{s}^a$, or equivalently, the color $c_s^a$ {\rm (Table~\ref{codes}, Column 2)}.
\end{lemma}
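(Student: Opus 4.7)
The plan is to derive the formulas (\ref{sums}) by tracing the outer boundary of the appropriate margin of the Wang patch $w^a$, and then to deduce the dependence on the color $c_s^a$ from the construction of the coloring in Section~\ref{prwt}. The key reduction, via Lemma~\ref{pseudolines}, is that every corner tile $D_\bn$ of a Wang patch is of type $\bv'_0\wedge\bv'_1$, so the four corner tiles of $w^a$ are mutual translates, and consequently $\bd_{\bn'}-\bd_\bn$ coincides with the displacement between the lower-left vertices of the corresponding corner tiles. This reduces each $\bb_s^a$ to a displacement between lower-left corners of two adjacent corner tiles of $w^a$, computable by walking along the outer boundary of the corresponding margin.

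To compute $\bb_{\rm b}^a$, I would start at the lower-left vertex of the bottom-left corner tile of $w^a$, traverse its bottom $\bv'_0$ edge (arriving at its lower-right vertex), then walk along the outer bottom edges of the intermediate trail tiles. By the definition of the edge code these intermediate edges contribute $\sum_{j\in\beta_{\rm b}^a}\bv'_j$, and the path terminates at the lower-left vertex of the bottom-right corner tile (since the last intermediate trail tile meets the bottom-right corner tile at that vertex). This yields $\bb_{\rm b}^a=\bv'_0+\sum_{j\in\beta_{\rm b}^a}\bv'_j$. The case $s={\rm t}$ is symmetric, starting from the upper-right of the upper-left corner tile. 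For $s\in\{{\rm l},{\rm r}\}$, the initial edge of the outer path is a $\bv'_1$ edge of the corresponding corner tile, which by the same argument gives $\bb_s^a=\bv'_1+\sum_{j\in\beta_s^a}\bv'_j$, completing (\ref{sums}).

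The final assertion of the lemma is then immediate: the right-hand side of (\ref{sums}) depends only on the code word $\beta_s^a$, and by the construction of the Penrose Wang tile colors in Section~\ref{prwt} distinct code words receive distinct colors in $\cC_{{\rm b},{\rm t}}$ or $\cC_{{\rm l},{\rm r}}$, so $c_s^a$ determines $\beta_s^a$ and hence $\bb_s^a$. The main subtlety I would need to check carefully is the endpoint incidence: after the initial $\bv'_0$ (or $\bv'_1$) edge and the outer edges of all intermediate trail tiles, the path must land exactly at the lower-left vertex of the terminal corner tile, rather than overshooting or undershooting. This is a local geometric consequence of the trail structure, since each intermediate trail tile shares its relevant edge with the next tile and the terminal shared edge has the required vertex as an endpoint. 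Once this incidence is verified the lemma reduces to vector addition around the margin.
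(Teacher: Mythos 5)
Your proof is correct. The paper actually states Lemma~\ref{type} without any proof, presenting it as an immediate observation (``these vectors can also be found by summing the edge vectors along the sides of the corresponding Wang patch''), and your argument---reducing each $\bb_s^a$ to a displacement between corresponding vertices of two corner tiles, which are translates of one another because both are type $\bv'_0\wedge\bv'_1$, and then summing the outer margin edges recorded by $\beta_s^a$---is precisely the implicit justification, with the one genuinely delicate point (that the concatenated outer-edge path terminates exactly at the required vertex of the terminal corner tile, because consecutive trail tiles share a type $\bv'_j$ edge whose outer endpoint links the successive boundary edges) correctly identified and resolved.
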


Computing the sums (\ref{sums}) for each $a\in \cA$ shows that there
are four possible vectors for $\bb_{\rm b}^a$ and $\bb_{\rm t}^a$,
and four more for $\bb_{\rm l}^a$ and $\bb_{\rm r}^a$.
These are shown in Columns 2 and 5 on Table~\ref{codes1} as
$\bb_{\rm b}^v,\bb_{\rm t}^v$ for $v\in\{0,1,2,3\}$,
and as $\bb_{\rm l}^{v'},\bb_{\rm r}^{v'}$ for $v'\in\{0,1,2,3\}$. 
Columns 3 and 6 of Table~\ref{codes1} show all the (Wang tile) colors
$c_{\rm b},c_{\rm t},c_{\rm l}$, or $c_{\rm r}$ that correspond
to these vectors, as described in Lemma~\ref{type}.

\begin{figure}[h!]
\begin{center}
\includegraphics{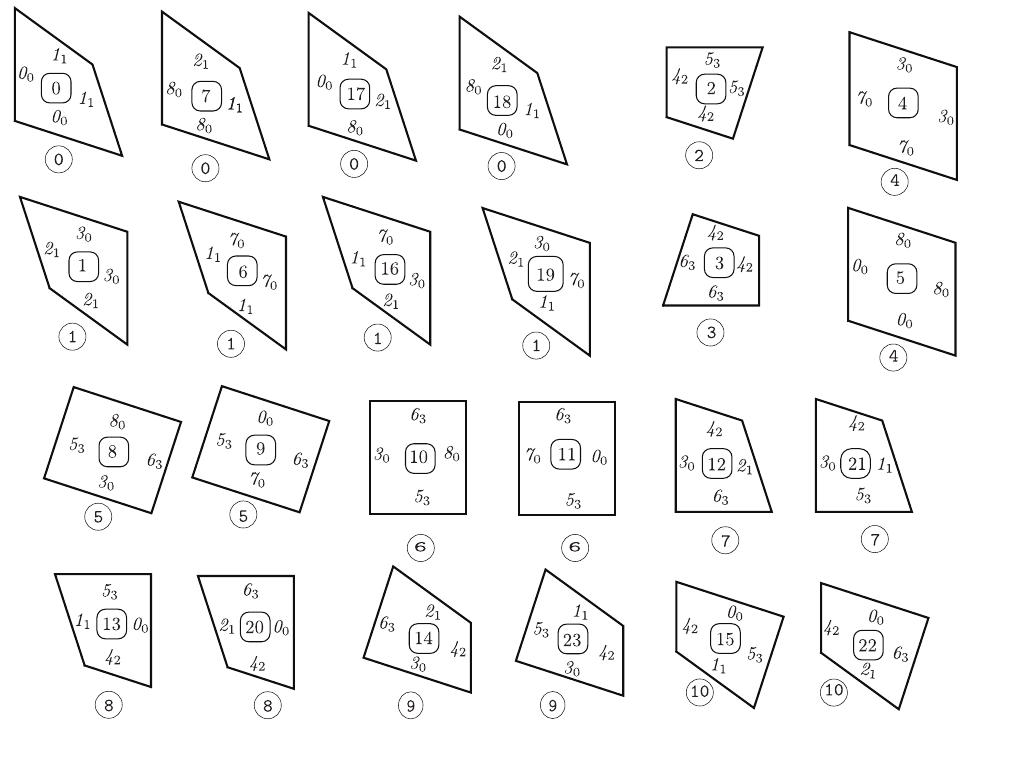}
\caption{The 24 tetragon Penrose tiles 
$p_4:=\{M^a:a\in\cA\}$, where $a$ is shown in the rounded squares. Each edge is labeled with its
color, $c_{\rm b}, c_{\rm t}\in\{{\it 0},\dots {\it 8}\}$ and 
$c_{\rm l}, c_{\rm r}\in\{{\it 0},\dots {\it 8}\}$, (Table~\ref{codes}, Column 3), with the 
subscript showing the side vector $v$ or $v'$ 
(Table~\ref{codes1}, Columns 2 and 4). The tile shape (the unmarked 
tetragon tile type $v$) is shown in the circle below each tile. The 24 Penrose Wang 
 tiles (or Penrose parallelogram Wang tiles) may be obtained by deforming each tetragon 
with the unit square or rhomb.  
\label{tetragon2}}
\end{center}
\end{figure}

Column 4 of Table~\ref{codes} shows which edge vector defines each of the four sides of the unmarked 
tetragon Penrose tile. 
Table~\ref{codes}, Column 5 shows that there are 11 different unmarked 
tetragon Penrose tiles $U^t$ for $t\in\{0,\dots,10\}$.

\begin{table}
\begin{tabular}{||c|c|c||c|c|c||}
\hhline{||=|=|=||=|=|=||}
$v$ &$\bb^v_{\rm b}, \bb^v_{\rm t}$ 
&$c_{\rm b},c_{\rm t}$&$v' $&$\bb^{v'}_{\rm l}, \bb^{v'}_{\rm r}$ 
& $c_{\rm l},c_{\rm r}$\\
\hhline{||=|=|=||=|=|=||}
0&$(1.1710, -0.3804)$&0,3,7,8    &0&$(0, 1.231)$&0,3,7,8\\
\hline
1&$(0.8472, -0.6155)$&1,2&1&$(-0.3236, 0.9960)$&1,2\\
\hline
2&$(0.7236, -0.2351)$&4&2&$(0, 0.7608)$&4\\
\hline
3&$(1.047, 0)$&5,6&3&$(0.3236, 0.9960)$&5,6\\
\hhline{||=|=|=||=|=|=||}
\end{tabular}
\caption{
The vertical and horizontal edge vectors for marked and unmarked tetragon Penrose tiles. 
The vectors for $v=0$ and $v=2$, and  for $v'=0$ and $v'=2$, have the same slopes.
Columns 3 and 5 show which edge colors correspond to each vector.
\label{codes1}}
\end{table}

\begin{definition}\label{twt}
We define \emph{marked tetragon Penrose tiles} $m_4=\{M^a,a\in \cA\}$  
as follows. For each $a\in \cA$ (Table~\ref{codes}, Column 1) we define 
$M^a$ by decorating the  
corresponding unmarked tetragon Penrose tile $U^v$, where $v=v(a)$ is 
given in Table~\ref{codes}, Column 5, by coloring its edges using the Wang tile colors
 $c_{\rm b},c_{\rm t},c_{\rm l}$, and $c_{\rm r}$ (Table~\ref{codes}, Column 3).
The matching rule  $m_4^{(2)}$ is the requirement that the colors of the adjacent 
edges of two adjacent tiles must match.
\end{definition}

Any two horizontal or vertical edges that have the same color also have the same edge vector.
Thus the adjacency condition is the same as for the Penrose Wang tiles.  
We call $(X_{m_4},\R^2,T)$ the 
\emph{tetragon Penrose tiling dynamical system}.
We see  that 
$(X_{m_4},\R^2,T)$ 
is continuously orbit equivalent
to $(W_{q_{\rm p}},\R^2,T)$  by 
 a deformation, so 
in particular, $(W_{q_{\rm p}},\R^2,T)$
 is nonempty and free.

The next two results summarize the dynamical properties of the four dynamical systems studied in this section. Theorem~\ref{pwt} discusses 
the \emph{tetragon Penrose tiling} system $(X_{m_4},\R^2,T)$  
and \emph{Penrose parallelogram Wang tile} system $(W_{Aq_{\rm p}},\R^2,T)$.
Corollary~\ref{pwt2} discusses
Raphael Robinson's \emph{Penrose Wang tile} system $(W_{q_{\rm p}},\R^2,T)$, and the corresponding $\Z^2$ SFT   
$(Z_{q_{\rm p}},\Z^2,S)$. 

\begin{theorem} \label{pwt}
The dynamical systems 
$(X_{m_4},\R^2,T)$  and  $(W_{Aq_{\rm p}},\R^2,T)$ 
are  both topologically conjugate $($MLD in the first case$)$ to the Penrose tiling dynamical system 
$(\sX,\R^2,T)$.
Each is strictly ergodic and almost automorphic, with  
$\Sigma_T=Z[e^{2\pi i/5}]\subseteq\R^2$, $\mathfrak{M}_T=\{1,2,10\}$,
and $\N_1(T)=\{\bv_0^\perp, \dots,\bv_4^\perp\}$.
\end{theorem}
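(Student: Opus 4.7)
The plan is to exhibit a translation-equivariant MLD conjugacy $\Phi:\sX\to X_{m_4}$ and a translation-equivariant topological conjugacy $\Psi:\sX\to W_{Aq_{\rm p}}$. All of the asserted dynamical invariants, strict ergodicity, almost automorphicity with $\Sigma_T=\Z[e^{2\pi i/5}]$ and $\mathfrak{M}_T=\{1,2,10\}$, and $\N_1(T)=\{\bv_0^\perp,\dots,\bv_4^\perp\}$, then transfer from $(\sX,\R^2,T)$ via Theorems~\ref{RPenrose} and \ref{mainthm}. In particular, because $\Phi$ and $\Psi$ are equivariant for the same $\bt$, they send each direction $\cV$ to $\cV$, and hence preserve direction-$\cV$ weak and strong expansiveness (Proposition~\ref{FLCdirectioncor}).

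For $\Phi$ I would set $\Phi(x):=u(x)=\{U_\bn(x)\}_{\bn\in\Z^2}$ from Definition~\ref{tetragons}, decorating $U_\bn(x)$ with the Wang-tile colors $(c^{a_\bn}_{\rm b},c^{a_\bn}_{\rm t},c^{a_\bn}_{\rm l},c^{a_\bn}_{\rm r})$ that correspond to the Wang-patch type $a_\bn(x)\in\cA$ of $w_\bn(x)$, so that $U_\bn(x)$ is literally the marked tetragon $M^{a_\bn(x)}$ of Definition~\ref{twt}. Lemma~\ref{type} shows that the four edge vectors of $U_\bn(x)$ depend only on the colors, so adjacent tetragons fit both geometrically and in the matching rule $m_4^{(2)}$; hence $\Phi(x)\in X_{m_4}$. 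The local radius for $\Phi$ is a universal constant controlled by Lemma~\ref{movement} and Proposition~\ref{rr24}: the corners $\bd_\bn$ (hence the shape of $U_\bn(x)$) and the type $a_\bn(x)$ (hence its decoration) are both read off a fixed-radius ball in $x$. The inverse is equally local, since from the label $a_\bn$ on each tetragon we reinstall the Wang patch $w^{a_\bn}$, and because $q_{\rm p}^{(2)}$ was defined from the adjacency structure of Wang patches in $\sX$ (Proposition~\ref{rr24}), the reconstructed tiling necessarily lies in $\sX$.

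For $\Psi$ I would work through the grid tiling model. For $x\in\sX$, let $\bu=\varphi(x)\in\T^5_0$ and $\bu'=(u_0,u_1)\in[0,1)^2$, and define
\[
\Psi(x):=\{AQ^{a_\bn(x)}+A(\bn-\bu')\}_{\bn\in\Z^2}\in W_{Aq_{\rm p}},
\]
so that the labelled parallelogram tile $AQ^{a_\bn(x)}$ sits exactly on the rhombus $R_\bn$ of (\ref{tess}). Translation equivariance becomes the statement, via Lemma~\ref{move} and (\ref{tess}), that the shift of $\bu'$ produced by $\bu\mapsto K^\bt\bu$ induces precisely the translation of the rhombus tessellation $\{R_\bn\}$ by $-\bt$, with the labels reindexed compatibly. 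Surjectivity is Corollary~\ref{free}. Continuity of $\Psi$ and its inverse reduces to continuity of $\varphi$ and of the map $x\mapsto a_\bn(x)$, both of which are manifestly local where $x$ is nonsingular. Injectivity in the nonsingular case is Lemma~\ref{readsturmian}; in the singular case it rests on the observation that flipping a worm or rotating a cartwheel in $x$ alters the Wang-patch types $a_\bn(x)$ at infinitely many indices $\bn$ along the worm or spokes, so distinct preimages of $y(\bu)$ produce distinct label sequences and hence distinct elements of $W_{Aq_{\rm p}}$.

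The hard part is precisely this singular case for $\Psi$: I must verify that the label sequence $(a_\bn(x))$ genuinely encodes the fill direction of each worm and the rotation of each cartwheel, so that $\Psi$ is injective over $\varphi^{-1}(Y\setminus Y_1)$, and that $\Psi$ remains continuous across this singular set (where $a_\bn$ depends discontinuously on $\bu$ alone but continuously on $x$ itself). Once $\Phi$ and $\Psi$ are established as translation-equivariant homeomorphisms, strict ergodicity together with the values of $\Sigma_T$, $\mathfrak{M}_T$, and $\N_1(T)$ transfer verbatim from Theorems~\ref{RPenrose} and \ref{mainthm}.
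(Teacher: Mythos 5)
Your two maps are exactly the paper's: your $\Psi$ is the paper's conjugacy $\Phi(x)=\{AQ^{a_\bn}+A(\bn-\bu')\}_{\bn\in\Z^2}$ built from $\varphi(x)$ and the Wang-patch types, the MLD conjugacy $\sX\to X_{m_4}$ is treated as essentially immediate in both, and the transfer of strict ergodicity, $\Sigma_T$, $\mathfrak{M}_T$ and $\N_1(T)$ from Theorems~\ref{RPenrose} and \ref{mainthm} through an equivariant conjugacy is exactly the paper's final step. The problem is that the step you explicitly defer --- ``the hard part is precisely this singular case for $\Psi$'' --- is the actual content of the proof, and your plan for closing it (tracking how worm flips and cartwheel rotations alter the labels $a_\bn(x)$ at infinitely many indices, and checking continuity across the singular set by hand) is neither carried out nor the efficient route. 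A second, smaller gap of the same kind sits in your MLD argument: the claim that a tiling reconstructed from any element of $X_{m_4}$ ``necessarily lies in $\sX$'' because $q_{\rm p}^{(2)}$ was read off from $2$-tile adjacencies in $\sX$ is precisely the nontrivial local-to-global statement; matching rules that permit exactly the $2$-tile patches of $\sX$ do not by themselves force every globally admissible tiling into $\sX$. That statement is Corollary~\ref{free}, and it must be invoked (as the paper does), not inferred from the definition of the rules.

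The paper closes the singular case with two devices that are available to you and require no analysis of worms or cartwheels. First, equivariance: instead of verifying $\Psi(T^\bt x)=T^\bt\Psi(x)$ directly (with its reindexing bookkeeping, which is genuinely delicate at singular $x$), note that on the dense set $\sX_1$ one has $\Phi|_{\sX_1}=\theta\circ\varphi$, where $\theta:Y_1\to W_{Aq_{\rm p}}$ replaces each grid patch $g_\bn(\bu)$ by the tile $AQ^{a_\bn}+A(\bn-\bu')$ on the same support $R_\bn$ (Lemma~\ref{nns}); $\theta$ is a local map in the sense of Definition~\ref{local}, hence uniformly continuous and equivariant, and since $\Phi$ is continuous on all of $\sX$ (the types $a_\bn(x)$ are read from bounded windows of $x$ itself, and $\varphi$ is continuous by Lemma~\ref{closure} --- this is exactly your parenthetical remark that $a_\bn$ is continuous in $x$ though not in $\bu$), $\Phi$ is the unique continuous extension of $\theta\circ\varphi$ and therefore equivariant everywhere. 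Second, bijectivity: the uniqueness-up-to-translation clause of Corollary~\ref{free} already says the label sequence determines the Penrose tiling up to translation; in particular it already encodes the worm fillings and cartwheel rotations, which is the fact you were proposing to re-prove. Corollary~\ref{free} thus gives a bijection between $T$-orbits in $\sX$ and in $W_{Aq_{\rm p}}$, and then equivariance plus freeness of both actions finishes: if $\Phi(x')=\Phi(x)$, the orbit correspondence gives $x'=T^\bs x$ for some $\bs$, whence $\Phi(x)=\Phi(T^\bs x)=T^\bs\Phi(x)$ and $\bs=\bZr$ by freeness; surjectivity follows since $\Phi$ maps each orbit onto the corresponding orbit. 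Substituting these two arguments for your deferred verification turns your outline into the paper's proof.
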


\begin{corollary} \label{pwt2}
The dynamical systems 
 $(W_{q_{\rm p}},\R^2,T)$ and  $(Z_{q_{\rm p}},\Z^2,S)$ are both
strictly ergodic, and almost automorphic, satisfying  
$\Sigma_T=\Z[\alpha]\times \Z[\alpha]\subseteq\R^2$
and $\Sigma_S=(\Z[\alpha]/\Z)\times (\Z[\alpha]/\Z)\subseteq \T^2$.
Also $\mathfrak{M}_T=\mathfrak{M}_S=\{1,2,10\}$,
and $\N_1(T)=\N_1(S)=\{0,1,\infty, \gamma, \alpha\}$. 
\end{corollary}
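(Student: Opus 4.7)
The plan is to derive Corollary~\ref{pwt2} from Theorem~\ref{pwt} by transporting information across the chain of relationships among $(W_{q_{\rm p}},\R^2,T)$, $(W_{Aq_{\rm p}},\R^2,T)$, and $(Z_{q_{\rm p}},\Z^2,S)$ established in Section~\ref{ke}. Namely, $(W_{q_{\rm p}},\R^2,T)$ is topologically conjugate to the unit suspension $(\widetilde{Z_{q_{\rm p}}},\R^2,\widetilde S)$, and by Lemma~\ref{composed}(1) the parallelogram system $(W_{Aq_{\rm p}},\R^2,T)$ is topologically conjugate to the composition action $(\widetilde{Z_{q_{\rm p}}},\R^2,\widetilde S\circ A)$. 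Together with Theorem~\ref{pwt}'s identification of $(W_{Aq_{\rm p}},\R^2,T)$ with the Penrose system $(\sX,\R^2,T)$, this reduces the task to undoing the composition by the invertible matrix $A$ and the unit-suspension correspondence.

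The qualitative properties come essentially for free: composing an $\R^2$-action with an invertible linear map leaves orbits, invariant probability measures, and the factor map $\varphi$ onto the Kronecker factor unchanged (the factor simply becomes $K\circ A$). Hence strict ergodicity, almost automorphy, and the preimage multiplicities all transfer from $(\sX,\R^2,T)$ to $(W_{q_{\rm p}},\R^2,T)$, giving $\mathfrak M_T=\{1,2,10\}$, and then descend to $(Z_{q_{\rm p}},\Z^2,S)$ via the unit suspension, yielding $\mathfrak M_S=\{1,2,10\}$.

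For the point spectra, I would combine Remark~\ref{rcomposed}(2) ($\Sigma_{\widetilde S\circ A}=A^t\Sigma_{\widetilde S}$) with Theorem~\ref{pwt} to get $\Sigma_{W_{q_{\rm p}}}=(A^t)^{-1}\Z[e^{2\pi i/5}]$, then verify by direct computation, using the $\Z$-basis $\{\bv_0,\bv_1,\bv_2,\bv_3\}$ of the decomplexified $\Z[e^{2\pi i/5}]$ and the identities $\sin(\pi/10)=\alpha/2$ and $\cos(\pi/5)=\gamma/2$, that this lattice equals $\Z[\alpha]\times\Z[\alpha]$. The spectrum of $S$ then follows from Remark~\ref{rcomposed}(1): $\Sigma_{\widetilde S}=\pi^{-1}\Sigma_S$, and since $\Z^2\subseteq\Z[\alpha]^2$, projecting gives $\Sigma_S=(\Z[\alpha]/\Z)\times(\Z[\alpha]/\Z)$. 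For non-expansive directions, Lemma~\ref{composed}(3) applied with the identity matrix gives $\N_1(W_{q_{\rm p}})=\N_1(Z_{q_{\rm p}})$, while applied with the actual $A$ it gives $\N_1(Z_{q_{\rm p}})=\{A^{-1}(\bv_j^\perp):j=0,\dots,4\}$. Writing $A^{-1}$ explicitly and applying angle-addition formulas then collapses each of these five lines to one of slope $\{0,1,\infty,\gamma,\alpha\}$.

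The main obstacle I anticipate is the pair of explicit computations at the end---the lattice identity $(A^t)^{-1}\Z[e^{2\pi i/5}]=\Z[\alpha]^2$ and the slope identification. Both require simplifying expressions of the form $\sin(\pi/10)\sin(2\pi j/5)\pm\cos(\pi/10)\cos(2\pi j/5)$ via angle-addition formulas and recognizing the outputs cleanly in terms of $\alpha=2\sin(\pi/10)$ and $\gamma=1/\alpha$. One must also be careful that the two definitions of $A$ in Sections~\ref{gridpatches} and~\ref{pwp} (which differ by a column swap) are applied consistently, so that the symbolic $\Z^2$ axes are oriented correctly relative to the Euclidean plane.
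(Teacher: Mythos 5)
Your high-level route is exactly the paper's: the published proof of Corollary~\ref{pwt2} is the one-liner ``this follows from Theorem~\ref{pwt}, Lemma~\ref{composed}, and Remark~\ref{rcomposed},'' plus the observation that $\Sigma_S$ is easiest to see from the fact that $R_\alpha\otimes R_\alpha$ is the Kronecker factor of $S$. Your transfer of strict ergodicity, almost automorphy and $\mathfrak{M}$ is fine. The genuine gap is in your spectrum step: the lattice identity you propose to ``verify by direct computation'' is false. Reading Lemma~\ref{composed}(1) literally and applying Remark~\ref{rcomposed}(2) does yield $\Sigma_T=(A^t)^{-1}\Z[e^{2\pi i/5}]$, but this lattice is \emph{not} $\Z[\alpha]\times\Z[\alpha]$: with $A=[\bbf_1;\bbf_0]^t$ one gets $(A^t)^{-1}\bv_1=\bigl(\cos(2\pi/5)+\sin(\pi/10)\sin(2\pi/5),\,\cos^2(\pi/10)\bigr)$, and $\cos^2(\pi/10)=\tfrac12+\tfrac{\gamma}{4}\notin\Z[\alpha]$ (the other column order fails similarly). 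The correct transfer is $\Sigma_T=A^t\,\Z[e^{2\pi i/5}]$: translating the parallelogram tiling by $\bt$ moves the suspension coordinate by $A^{-1}\bt$, so $W_{Aq}$ is conjugate to $\widetilde S\circ A^{-1}$, not $\widetilde S\circ A$. Lemma~\ref{composed}(1) as printed carries this inversion, and it is incompatible with Lemma~\ref{composed}(3), which is stated correctly (directions transform by $A$) and which you also invoke --- so your proposal silently relies on two mutually inconsistent readings of the same lemma; a correct write-up must flag and fix this. With the right direction the computation is clean: $A^t\bv_j=(\bbf_1\cdot\bv_j,\ \bbf_0\cdot\bv_j)=\bigl((W\bbf_1)_j,(W\bbf_0)_j\bigr)$, and the paper's own values $W\bbf_1=(1,0,-1,-\alpha,\alpha)$, $W\bbf_0=(0,1,\alpha,-\alpha,-1)$ give generators $(1,0),(0,1),(-1,\alpha),(-\alpha,-\alpha),(\alpha,-1)$, whose group is exactly $\Z[\alpha]\times\Z[\alpha]$. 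Alternatively, you can bypass the matrix bookkeeping entirely, as the paper does for $\Sigma_S$: by Lemma~\ref{readsturmian} the symbols $\sigma(w_\bn(x))$ form a $\Z^2$ Sturmian sequence, so $R_\alpha\otimes R_\alpha$ is the Kronecker factor of $S$ and $\Sigma_S=(\Z[\alpha]/\Z)\times(\Z[\alpha]/\Z)$ directly, whence $\Sigma_T=\pi^{-1}\Sigma_S=\Z[\alpha]\times\Z[\alpha]$ by Remark~\ref{rcomposed}(1).

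A second, smaller problem: your direction step uses the correct map $\N_1(Z_{q_{\rm p}})=\{A^{-1}(\bv_j^\perp)\}$, but the claim that the five images have slopes $\{0,1,\infty,\gamma,\alpha\}$ does not survive the computation. A symbolic direction spanned by $(a,b)$ is non-expansive exactly when $a\bbf_1+b\bbf_0\perp\bv_j$ for some $j$, i.e.\ $a(W\bbf_1)_j+b(W\bbf_0)_j=0$; running over $j=0,\dots,4$ gives slopes $\infty,\,0,\,\gamma,\,-1,\,\alpha$ (for $j=3$ the condition is $-\alpha a-\alpha b=0$, slope $-1$). Geometrically, $\bbf_1-\bbf_0$ is parallel to $\bv_3^\perp$, while $\bbf_1+\bbf_0$ is parallel to $\bv_3$ itself, so the fifth non-expansive slope is $-1$, not $+1$. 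No choice of axis ordering repairs this (swapping axes replaces each slope by its reciprocal, which fixes the set; negating an axis would also negate $\gamma$ and $\alpha$). So rather than asserting agreement with the printed set, your proof should either record the set as $\{0,\infty,\gamma,\alpha,-1\}$ or identify the orientation convention under which the corollary's ``$1$'' is to be read; as proposed, the final ``angle-addition'' step would not close.
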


\begin{proof}
This follows from Theorem~\ref{pwt}, Lemma~\ref{composed}, and Remark~\ref{rcomposed}. 
However, $\Sigma_S=(\Z[\alpha]/\Z)\times (\Z[\alpha]/\Z)$ is easier to see from the fact that 
$R_\alpha\otimes R_\alpha$ is the Kronecker factor of $S$.
\end{proof}

\begin{remark}
In \cite{mann}, the minimality of $(Z_{q_{\rm p}},\Z^2,S)$ is stated as a conjecture, and an earlier preprint version  
of \cite{mann}. The preprint \cite{mann} 
also proves that  $\N_1(T)$ is as stated above.
\end{remark}

\begin{proof}[Proof of Theorem~\ref{pwt}]
It suffices to prove the topological conjugacies, and then everything else follows from Theorems~\ref{RPenrose} and 
\ref{mainthm}. 
For  $(X_{m_4},\R^2,T)$ there is an obvious 
MLD topological conjugacy.

We will now construct a topological conjugacy 
$\Phi:\sX\to W_{Aq_{\rm p}}$.
For $x\in \sX$, let $\varphi(x)=y(\bu)\in Y$ and  for $\bu=(u_0,\dots,u_4)\in \T^5_0$, let 
$\bu'=(u_0,u_1)\in[0,1)^2$ (as in (\ref{tess})).
For each $\bn\in\Z^2$, let $a_\bn\in \cA$ be such  that 
$w_\bn(x)$ is a type $w^{a_\bn}$ Wang patch. Define 
$\Phi(x):=\{AQ^{a_\bn}+A(\bn-\bu')\}_{\bn\in\Z^2}$. 
Note that $\Phi$ is well defined: each $AQ^{a_\bn}+A(\bn-\bu')$ in $\Phi(x)$ is 
supported on $R_\bn=R+A(\bn-\bu')$, and  
$\Phi(x)$ satisfies  the local rule $(Aq_{\rm p})^{(2)}$.
Clearly $\Phi$ is continuous. 
 
Next, we consider a different formula for 
the restriction $\Phi|_{\sX_1}$ of $\Phi$ to $\sX_1$. 
For $x\in \sX_1$, let $y(\bu)=\varphi(x)\in Y_1$. 
By Lemma~\ref{nns}, 
$w_\bn(x)=g_\bn(\bu)^*$ for all $\bn\in\Z^2$. 
Define $\theta:Y_1\to W_{Aq_{\rm p}}$ to be the map 
that replaces each grid patch $g_\bn(\bu)$ in $y(\bu)$ with the Wang tile  
$\Theta(g_\bn(\bu)):=  
AQ^{a_\bn}+A(\bn-\bu')$, noting 
that both $g_\bn(\bu)$ and $AQ^{a_\bn}+A(\bn-\bu')$
are supported on $R_\bn$. In particular, $\theta$ is a local map in the sense of  
Definition~\ref{local}, so it 
is uniformly continuous, and satisfies $\theta(T^\bt y(\bu))=T^\bt(\theta(y(\bu)))$. Clearly 
$\theta(\varphi(x))=\Phi|_{\sX_1}(x)$ (from the first part), so $\Phi$ is uniformly continuous on $\sX_1$, and satisfies
$\Phi|_{\sX_1}(T^\bt x)=T^\bt(\Phi|_{\sX_1}(x))$. It follows that $\Phi$ is the unique continuous  
extension of $\theta(\varphi(x))$, and thus satisfies $\Phi(T^\bt x)=T^\bt(\Phi(x))$.

It remains to show $\Phi$ is a bijection. 
Lemma~\ref{free} shows there is a 1:1 correspondence between $T$ orbits 
on $\sX$ and $T$ orbits on $W_{Aq_{\rm p}}$.
 But since $T$ acts freely  
 on both $\sX$ and $W_{Aq_{\rm p}}$, the fact that $\Phi(T^\bt x)=T^\bt(\Phi(x))$ shows that $\Phi$ is bijective on each orbit as well. 
 This implies $\Phi$ is a bijection. 
\end{proof}


\begin{thebibliography}{99} 

\bibitem{BG} Baake, Michael and Grimm, Uwe, 
{\it Aperiodic Order, Volume I: A Mathematical Invitation}, 
Encyclipedia of Mathematics and Its Applications {\bf 149},  
Cambridge University Press, (2013) 

\bibitem{Berger} Berger, Robert, The undecidability of the domino problem. {\it Mem. Amer. Math. Soc.}, {\bf 66}, (1966).

\bibitem{BL}
Boyle, Mike and Lind, Douglas, Expansive subdynamics, 
{\it Transactions of the American Mathematical Society}, {\bf 349}, (1997), 
55-102.

\bibitem{BW}
 Bowen, Rufus and Walters, Peter, Expansive one-parameter flows, {\it Journal of differential Equations}, 
 {\bf 12}, (1972),  
 180-193.
 
 \bibitem{CS} Clark, Alex, and Sadun, Lorenzo, When shape matters: deformations of tiling spaces, {\it Ergodic Th. and 
 Dynam. Sys.} {\bf 26}, 69-86, (2006).  

\bibitem{DB} De Bruijn, N. G., Algebraic theory of Penrose's non-periodic tilings of the plane, {\it Kon. Nederl. Akad. Wetensch. Proc. Ser. A}, {\bf 43}(84), (1981), 1-7.


\bibitem{cdevries} De Vries, J, {\it Elements of topological dynamics} (Vol. 257), Springer Science and Business Media, (2013), p. 592.

\bibitem{Down}
Downarowicz, Tomasz, Entropy of a symbolic extension of a dynamical system, {\it Ergodic Theory and Dynamical 
Systems}, 21, (2001), no. 4, 1051-1070. 

 \bibitem{Fogg} Fogg, N. Pytheas, Sturmian Sequences (by Pierre Arnoux), in {\it Substitutions in Dynamics,
 Arithmetics and Combinatorics}, Springer Lecture Notes in Mathematics {\bf 1794}, 143-195, (2002). 

 
 
 \bibitem{FS} Frank, Natalie Priebe and Sadun, Lorenzo Fusion tilings with infinite local complexity. {\it Topology Proc.},
 {\bf  43}, 
 (2014), 235-276.
 
\bibitem{GS} Grunbaum, B., and Shephard, G. C, {\it Tilings and patterns}, W. H. Freeman and Company, (1987).

\bibitem{Gura} Gura, A. A., Horocycle flow on a surface of negative curvature is separating, 
{\it Mat. Zametki}, {\bf 36}, (1984), 279-284.

\bibitem{HK} Hermle, Patrick and Kreidler, Henrik, A Halmos-von Neumann Theorem for Actions of General Groups, 
{\it Applied Categorical Structures}, {\bf 31}, article no. 38, (2023), DOI \url{https://doi.org/10.1007/s10485-023-09743-y} 

 
\bibitem{Hoch} Hochman, Michael, Non-expansive directions for $\Z^2$ actions, 
{\it  Ergodic Theory Dynam. Systems}, {\bf 31}, (2011), 91-112.

\bibitem{holton} Holton, Charles, Radin, Charles, Sadun, Lorenzo,
Conjugacies for tiling dynamical systems, 
{\it Comm. Math. Phys} {\bf 254} (2005), 343-359.

\bibitem{mann} Hults, H., Jitsukawa, H., Mann, C., and Zhang, 
Experimental Results on Potential Markov Partitions for Wang 
Shifts, preprint, arXiv:2302.13516, (2024).

\bibitem{Jang} Jang, Hyeeun, {\it Directional Expansiveness}, Ph.D. Dissertation, The George Washington University, (2021)

\bibitem{JR} Jeandel E., and Rao, M, An aperiodic set of $11$ wang tiles. {\it Advances in Combinatorics}, (2021),
Paper No. 1, 37 pp.

\bibitem{KH} Katok, A., and Hasselblatt, B., 
{\it Introduction to the Modern Theory of Dynamical Systems}, Cambridge University Press, (1995).

\bibitem{sebastian1} Labb\'e, Markov partitions for toral $\Z^2$-rotations featuring Jeandel-Rao Wang shift and
model sets, {\it Annales Henri Lebesgue}, {\bf 4} (2021), 283-324.

\bibitem{sebastian} Labb\'e, S., Mann, C., and McLoud-Mann, J., Nonexpansive directions in the Jeandel-Rao Wang Shift,
{\it Discrete and Continuous Dynamical Systems}, {\bf  43}, (2023), 3213-3250.
 
\bibitem{KR}
Keynes, H. B. and Robertson, J. B., Generators for topological entropy and expansiveness, {\it Theory of Computing 
Systems}, 3(1), (1969), 51-59.

\bibitem{Milnor}
Milnor, John, Directional entropies of cellular automaton-maps, {\it  Disordered systems and biological organization}, (Les Houches, 1985), NATO Adv. Sci. Inst., Ser. F Comput. Systems Sci., {\bf 20},
(1986), Springer, Berlin, 113-115.

\bibitem{Milnor1}
Milnor, John, On the entropy geometry of cellular automata, {\it Complex Syst.} {\bf 2}, (1988), 357-385.


\bibitem{Penrose} Penrose, R,  The role of aesthetics in pure and applied mathematical research,
{\it  Bull. Inst. Math. Appl.}, {\bf 10}, (1974), 266-271. 

\bibitem{petersen} Petersen, Karl, Factor maps between tiling dynamical systems, 
{\it Forum Math.}, {\bf 11} (1999), 503-512.

\bibitem{Reddy}
Reddy, W., Lifting expansive homeomorphisms to symbolic flows, {\it Math. Systems Theory} {\bf 2} (1968), 91-92.

 
\bibitem{Robinson}
 Robinson, Jr., E Arthur, Symbolic dynamics and tilings of $\R^d$, {\it Proceedings of Symposia in Applied Mathematics}, 
 {\bf 60}, (2004), 81-120.
 

\bibitem{Robinson1}
Robinson, Jr., E. Arthur, The dynamical properties of Penrose tilings. 
{\it Trans. Amer. Math. Soc.}, {\bf 348}, (1996), 4447-4464. 

\bibitem{Robinson3} Robinson, E. Arthur, Jr., A Halmos-von Neumann theorem for model set dynamical systems ,
{\it Dynamics, Ergodic Theory and Geometry}, MSRI Publications 54, Cambridge University Press, (2007),  243-272.

\bibitem{RobiSturmianentropy} Robinson, E. Arthur, Jr., Sturmian Expansions and Entropy, {\it Integers}, 
{\bf 11}, A13, (2011).
 

\bibitem{RRS} Robinson, E.Arthur, Jr., Rosenblatt, Joseph and Sahin, Ayse, Directional ergodicity and weak mixing for 
actions of $\R^d$ and $\Z^d$, {\it Indag. Math} {\bf 34} (2024), 837-864. 

\bibitem{Rudolph}
Rudolph, D. J.,  Markov Tilings of $\R^d$ and representations of $\R^d$-actions, 
{\it Contemporary Mathematics}, (1989), {\bf 94}, 271-290. 

\bibitem{sadun} Sadun, L., {\it Topology of tiling spaces},
Univ. Lecture Ser., 46,  American Mathematical Society, Providence, RI, (2008). 

\bibitem{SW} Sadun, L. and Williams, R. F.,  Tiling Spaces are Cantor Set Fiber Bundles, 
{\it Ergodic Theory and Dynamical Systems},  {\bf 23}, (2003), 307-316.

\bibitem{Wal}
Walters, P., {\it An introduction to ergodic theory},  Springer Graduate Texts in Mathematics, {\bf 79}, (1982), 127-134.

\bibitem{Wang} Wang, Hao, Proving Theorems by Pattern Recognition, {\it Bell System Tech. J.}, {\bf 40} (1961), 1-42. 

\end{thebibliography}
\end{document}